\newtheorem{theorem}{Theorem}[section]
\newtheorem{corollary}[theorem]{Corollary}
\newtheorem{lemma}[theorem]{Lemma}
\newtheorem{proposition}[theorem]{Proposition}
\newtheorem{definition}[theorem]{Definition}
\newtheorem*{remark}{Remark}
\theoremstyle{definition}
\newtheorem{example}{Example}[section]
\newcommand{\labs}{\displaystyle\left\lvert}
\newcommand{\rabs}{\right\rvert}
\newcommand{\dharm}{\mathcal{D}_{\textup{harm}}}
\newcommand{\blabs}{\displaystyle\Biggl\lvert}
\newcommand{\brabs}{\Biggr\rvert}
\title{On the Cauchy Integral and Jump Decomposition}
\author{James A. C. Young}
\date{Summer 2021}
\begin{document}

\pagenumbering{roman}

\maketitle

\chapter*{Introduction}
\addcontentsline{toc}{chapter}{Introduction}

\section*{Preface}

The purpose of these notes is to introduce the reader to the Cauchy type integral and the Sokhotski formulae (also know as the Jump decomposition, or simply the Jump problem). The article is split into two main chapters. The first is based on the contents of the first few sections of the book ``Boundary Value Problems" by F.D. Gakhov \cite{gakhov2014boundary}. The style of presentation is geared towards readers who have a background at the level of a first course in complex variables, and are comfortable with basic arguments in analysis. One of the goals of the text is to be expository, and for that reason nearly all proofs will contain thoroughly worked out arguments and explanations. After establishing the basic ideas in the first chapter, we will skip past much rigorous training to (informally) discuss some open questions and advancements in the field lying on the fringe of our understanding in the second chapter. We will see how many disciplines work together to tell a richer story, some of which include geometric function theory, singular integral operators, and boundary value problems.

\medskip

A very special thank you to Eric Schippers for the guidance and tremendous amount of assistance over the summer months of 2021 that made this presentation possible.

\begin {flushright}
  James A. C. Young, September 7th 2021.
\end {flushright}
\section*{Notation}

Throughout the main article, the general notation for contours will be $C$. Unless otherwise stated, $C$ may be open or closed. The contours we will be working with are \emph{well-behaved}, which will be given an adequate definition shortly. This is done to keep attention directed at the main formulation of the article: The Jump formula. As a preview to the research problem, exploring similar ideas for much more exotic and ``poorly-behaved" curves is part of the current work of analysts in the field.

\smallskip
We say that a contour $C$ is \emph{smooth} if it satisfies the following properties:

\begin{enumerate}
    \item The contour is either a simple contour, meaning it does not self intersect, or a simple closed contour, meaning it only intersects at its end points.
    \item It is of class $\mathcal{C}^1$, meaning there is a parameterization of the contour that is continuously differentiable. Furthermore, we will assume the default parameterization of contours to be taken in the counterclockwise sense.
    \item Tangent vectors to the curve never vanish, meaning the curve has no cusps. This can also be stated as the derivative of the parameterization is never zero. An example of a curve with a cusp is the curve described by the equation $y^3=x^2$ in the $xy$-plane (The origin being the location of the cusp).
\end{enumerate}

A contour $C$ is \emph{piecewise} smooth if it consists of finitely-many smooth contours joined end-to-end. For example, a right triangle in the plane with vertices $(0,0)$, $(1,0)$, and $(0,1)$ is a piecewise smooth contour. For simplicity, smooth contours will always fall under the assumption of possibly being piecewise smooth. \emph{Endpoints} of a contour include both the traditional endpoints of an open contour which we will call \emph{true} endpoints (which will always be listed with respect to direction of traversal), as well as any endpoint of a particular contour included in the definition of a piecewise smooth contour where the two smooth contours that meet at this point couldn't together be considered smooth. In the case of our right triangle example, we would be counting the vertices as endpoints since they form ``corners."

\smallskip

Finally, our contours will be \emph{rectifiable}, meaning they have finite length (and consequently are bounded). This is actually implied by our assumption of a continuously differentiable parameterization (see chapter 6 in \cite{apostol1957mathematical}), but it is worth explicitly mentioning this property for the sake of clarity. The length of a contour $C$ is denoted by $\ell (C)$. All of these properties taken together encapsulate the notion of well-behaved, smooth curves we will adhere to.

\medskip

Let $C$ be a simple closed contour in the complex plane. Recall that the Jordan curve theorem states that such a curve divides the plane into two connected components; one bounded and one unbounded. See Newman's book \cite{newman1939elements} for a proof of Jordan's Theorem. The bounded component of the complement of $C$ is called the \emph{interior} of $C$ and will be denoted by $D^{+}$, while the unbounded component of the complement is called the \emph{exterior} of $C$ and will be denoted by $D^{-}$. By our choice of parameterization direction, $D^{+}$ will be the region to the left of direction of travel, and $D^{-}$ will be the region to the right of direction of travel.

\medskip

As a rule, we generally represent complex variables with $z$, $w$, or $\tau$, and real variables with $x$ or $t$. As a means of distinction, we use the symbol $\log$ for the complex logarithm, and $\ln$ for the real logarithm. Derivatives of functions will either be denoted by attaching an apostrophe to the function (if the variable in which differentiation is with respect to is clear from context), or by the function symbol with a subscript that labels the particular variable. For example, the derivative of $f(z)$ may be written as $f'(z)$, while the derivative of $g(z,\tau)$ with respect to $z$ will be written as $g_z(z,\tau)$.

\cleardoublepage 
\tableofcontents
\cleardoublepage 

\pagenumbering{arabic}

\chapter{Cauchy Integrals and The Jump Problem}
\label{chapter1}

\section{The Cauchy Type Integral}

First, we will revisit the astonishing Cauchy integral formula. This formula is fundamental to the theory of functions of a complex variable. We wish to extend the formula to both a wider class of functions and a larger domain\textemdash that being the unbounded complement of the contour. To precisely define what is meant by the latter, we need to become acquainted with the point at infinity. We start with the following definition.

\begin{definition} [Holomorphic Extension To The Point at Infinity]
\label{definition_HolomorphicExtensionToInfinity}
Let $C$ be a bounded simple closed contour, and let $f(z)$ be a function that is analytic on $D^{-}$. If $f\left(1/z\right)$ has a removable singularity at $z = 0$, then we define the holomorphic extension of $f$ to the point at infinity as
\smallskip
\[
f(\infty) \coloneqq \lim_{z \to 0} f\left(\frac{1}{z}\right).
\]
\end{definition}
\smallskip

The point at infinity is crucial in the development of complex analysis. The plane together with the point at infinity is called the Riemann sphere, and is the canonical example of a Riemann surface\textemdash the general objects on which complex analysis is performed.

\smallskip

The wider class of functions for the extension includes functions who maintain continuity, but not necessarily analyticity on the contour. The need for such a generalization\textemdash beyond testing the bounds of the theory\textemdash will be uncovered only after the investigation of the main problem and its consequences.

\smallskip

Next, we gather some preliminary results.

\begin{proposition}
\label{pass lim to integral}
Let $(f_n)_{n=1}^{\infty}$ be a sequence of continuous functions that converge uniformly to a function $f$ on a smooth contour $C$. Then
\smallskip
\[
\lim_{n \to \infty} \int_C f_n (z) \,dz = \int_C f(z) \,dz .
\]

That is, we can pass the limit through the integral sign.
\end{proposition}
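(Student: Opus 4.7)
The plan is to bound the difference of integrals by the supremum of $|f_n - f|$ on $C$ (which vanishes by uniform convergence) times the length of the contour (which is finite by rectifiability). This is essentially a direct application of the standard $ML$-style estimate for contour integrals.

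First I would note that $f$ is continuous on $C$: a uniform limit of continuous functions on any set is continuous there, so in particular $\int_C f(z)\,dz$ exists as a (complex) contour integral in the usual sense. Next, by linearity of the integral, the quantity to control is
\[
\left\lvert \int_C f_n(z)\,dz - \int_C f(z)\,dz \right\rvert = \left\lvert \int_C \bigl(f_n(z) - f(z)\bigr)\,dz \right\rvert.
\]

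Then I would invoke the standard estimation lemma for contour integrals, namely that for any continuous $g$ on $C$,
\[
\left\lvert \int_C g(z)\,dz \right\rvert \leq \max_{z \in C} |g(z)| \cdot \ell(C).
\]
This is where the hypothesis that $C$ is rectifiable (finite length) is essential\textemdash without it the bound would be useless. Applying the lemma with $g = f_n - f$ gives
\[
\left\lvert \int_C f_n(z)\,dz - \int_C f(z)\,dz \right\rvert \leq \ell(C) \cdot \max_{z \in C} |f_n(z) - f(z)|.
\]

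Finally, I would close the argument by uniform convergence: given $\varepsilon > 0$, choose $N$ large enough that $|f_n(z) - f(z)| < \varepsilon / \ell(C)$ for all $z \in C$ and all $n \geq N$. The displayed inequality then yields $\bigl\lvert \int_C f_n\,dz - \int_C f\,dz \bigr\rvert < \varepsilon$ for all $n \geq N$, which is the definition of $\lim_{n \to \infty} \int_C f_n(z)\,dz = \int_C f(z)\,dz$. There is no real obstacle here; the only points worth emphasizing to a reader new to the material are that uniform (not just pointwise) convergence is what lets us pull the sup outside, and that rectifiability of $C$ is precisely what guarantees the $\ell(C)$ factor is finite.
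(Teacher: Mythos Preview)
Your proof is correct and follows essentially the same approach as the paper: note that $f$ is continuous, apply the estimation lemma to $\int_C (f_n - f)\,dz$, and use uniform convergence to make the bound small. The only cosmetic difference is that you normalize by $\ell(C)$ to land on $\varepsilon$, whereas the paper lands on $\varepsilon\cdot\ell(C)$.
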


\begin{proof}
First, it is clear that the limit function is continuous, and hence integrable over $C$. Let $\varepsilon > 0$. Since $f_n$ converges to f uniformly on $C$, there is $N \in \mathbb{N}$ with the property that $\labs f_n - f\rabs < \varepsilon$ on $C$ whenever $n \geq N$. An application of the estimation lemma yields
\smallskip
\[
\labs \int_{C} f_n (z)\,dz - \int_{C} f(z)\,dz \rabs = \labs \int_{C} \left[f_n (z) - f(z)\right] \,dz \rabs < \varepsilon \cdot \ell(C)  
\]

\noindent
whenever $n \geq N$, which completes the proof.
\end{proof}

Interchanging limits and integrals is a common and important analytic technique. We will encounter the situation where we wish to pass a limit through an integral to a function that has a continuous parameter that acts as a sequence indexed continuously with an interval of the real line. The following lemma will accomplish this.

\begin{lemma}
\label{pass cont lim to integral}
Suppose that the real-valued function $f(x,t)$ is uniformly continuous on a set $S \times [a,b] \subset \mathbb{R}^2$. Then for any $x_0 \in S$:
\smallskip
\[
\lim_{x \to x_0} \int_a^b f(x,t) \,dt = \int_a^b f(x_0,t) \,dt .
\]

That is, the integral acts as a continuous function in $x$.
\end{lemma}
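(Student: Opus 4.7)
The plan is to reduce the statement to a direct $\varepsilon$–$\delta$ argument, using uniform continuity on $S \times [a,b]$ to obtain a modulus that is uniform in $t$, and then applying the estimation lemma inside the integral. The proof should parallel the preceding proposition, with the role of ``$n \geq N$'' in the uniform convergence statement played here by ``$|x - x_0| < \delta$.''

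First I would fix $\varepsilon > 0$ and invoke uniform continuity of $f$ on $S \times [a,b]$ to produce a $\delta > 0$ such that whenever $(x,t), (x',t') \in S \times [a,b]$ satisfy $\|(x,t) - (x',t')\| < \delta$, we have $|f(x,t) - f(x',t')| < \varepsilon/(b-a)$. Specializing to pairs of the form $(x,t)$ and $(x_0,t)$ with the same second coordinate, the distance reduces to $|x - x_0|$, so the bound $|f(x,t) - f(x_0,t)| < \varepsilon/(b-a)$ holds for \emph{every} $t \in [a,b]$ as soon as $|x - x_0| < \delta$. This is the key point: the uniformity in the second variable is what lets us turn a pointwise statement into an integral estimate.

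Next I would write the difference of the integrals as a single integral and bound it via the standard estimation inequality for real integrals:
\[
\left| \int_a^b f(x,t)\,dt - \int_a^b f(x_0,t)\,dt \right| \;\le\; \int_a^b |f(x,t) - f(x_0,t)|\,dt \;<\; \frac{\varepsilon}{b-a}\cdot (b-a) \;=\; \varepsilon,
\]
valid whenever $|x - x_0| < \delta$. Since $\varepsilon$ was arbitrary, this is precisely the definition of the limit asserted.

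There is no substantial obstacle here; the only subtlety worth flagging in the write-up is that one should note that $f(x_0, \cdot)$ and $f(x, \cdot)$ are continuous on $[a,b]$ (this is immediate from uniform continuity on the product), hence Riemann integrable, so the integrals in the statement actually make sense. Everything else is a direct translation of Proposition \ref{pass lim to integral} from a discrete index $n$ to a continuous parameter $x$, with uniform continuity replacing uniform convergence.
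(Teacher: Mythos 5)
Your argument is correct and is essentially the same as the paper's proof: uniform continuity supplies a $\delta$ that works uniformly in $t$, and the standard integral estimate then bounds the difference of integrals by $\varepsilon$. The extra remark about integrability of $f(x,\cdot)$ is a fine addition but not a substantive difference.
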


\begin{proof}
By uniform continuity of $f$ on $S\times[a,b]$, it follows that for any $\varepsilon > 0$, there is $\delta > 0$ with the property that $|f(x,t)-f(x_0,t)|<\varepsilon/(b-a)$ whenever $|x-x_0|<\delta$ and for every $t \in [a,b]$. Assuming $x\in S$ satisfies this requirement and estimating the difference of the integrals:
\begin{align*}
\labs \int_a^b f(x,t) \,dt - \int_a^b f(x_0,t) \,dt \rabs = \labs \int_a^b [f(x,t) - f(x_0,t)] \,dt \rabs &\leq \int_a^b \labs f(x,t) - f(x_0,t)\rabs \,dt \\
&< \int_a^b \frac{\varepsilon}{b-a}\,dt \\
&=\varepsilon
\end{align*}
\noindent
as required.
\end{proof}

The following is the same type of result, but pertaining to contour integrals in particular. It too will be extremely important, but through a different application.

\begin{corollary}
\label{pass cont lim to contour integral}
Suppose that the complex-valued function $f(\tau,z)$ is uniformly continuous on a set $C \times D \subset \mathbb{C}^2$, where $C$ is a smooth contour in the $\tau$-plane and $D$ is a domain in the $z$-plane. Then for any $z_0\in D$:
\smallskip
\[
\lim_{z \to z_0} \int_C f(\tau,z) \,d\tau = \int_C f(\tau,z_0) \,d\tau .
\]

\noindent
That is, the integral acts as a continuous function in $z$.
\end{corollary}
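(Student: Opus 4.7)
The plan is to follow the same template as the proof of Lemma~\ref{pass cont lim to integral}, substituting the estimation lemma for contour integrals in place of the trivial bound available for real Riemann integrals. Recall that the estimation lemma bounds $\left|\int_C g(\tau)\,d\tau\right|$ by $\max_{\tau \in C} |g(\tau)| \cdot \ell(C)$. Since $C$ is smooth and hence rectifiable, $\ell(C)$ is a finite positive constant which will play the role of $b-a$ in the previous lemma.

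First, I would fix $\varepsilon > 0$ and invoke uniform continuity of $f$ on $C \times D$ to produce $\delta > 0$ such that
\[
|f(\tau, z) - f(\tau, z_0)| < \frac{\varepsilon}{\ell(C)}
\]
for every $\tau \in C$ whenever $z \in D$ satisfies $|z - z_0| < \delta$. Because $D$ is open, I would shrink $\delta$ if necessary so that the full $\delta$-neighborhood of $z_0$ lies inside $D$; this ensures that the integral $\int_C f(\tau, z)\,d\tau$ is defined for every $z$ in question, and that the integrand, being continuous in $\tau$ for fixed $z$ by uniform continuity, is integrable along $C$.

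Next, using linearity of the contour integral followed by the estimation lemma, I would estimate
\[
\left| \int_C f(\tau, z)\,d\tau - \int_C f(\tau, z_0)\,d\tau \right| = \left| \int_C [f(\tau, z) - f(\tau, z_0)]\,d\tau \right| \leq \frac{\varepsilon}{\ell(C)} \cdot \ell(C) = \varepsilon
\]
whenever $|z - z_0| < \delta$. Since $\varepsilon > 0$ was arbitrary, this yields the desired continuity of the integral as a function of $z$.

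There is no real obstacle here; the argument is a direct transfer of the reasoning in Lemma~\ref{pass cont lim to integral} from the real interval $[a,b]$ to the smooth contour $C$. The only points worth flagging are the rectifiability of $C$, without which the quantity $\varepsilon/\ell(C)$ would be meaningless and the estimation lemma would fail to deliver a finite bound, and the openness of $D$, which guarantees that $z_0$ has a genuine neighborhood on which $f(\cdot, z)$ remains well defined.
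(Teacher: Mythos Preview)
Your proof is correct and is precisely the ``nearly identical argument'' the paper alludes to: you carry over the proof of Lemma~\ref{pass cont lim to integral} verbatim, replacing the interval length $b-a$ by $\ell(C)$ and the elementary real estimate by the estimation lemma.
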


\begin{proof}
A nearly identical argument as the one used for Lemma \ref{pass cont lim to integral} does the trick.
\end{proof}
\smallskip

The next concept is a form of equivalence for curves embedded in a set called a \emph{homotopy}.

\begin{definition}[Homotopic Contours]
Let $C_1$ and $C_2$ be two simple closed contours parameterized by functions $\alpha_1$ and $\alpha_2$ (respectively) that both share the common domain $[a,b]$. Let $D \subset \mathbb{C}$ be a subset of the complex plane that contains both $C_1$ and $C_2$. Then $C_1$ and $C_2$ are said to be homotopic in $D$ if there exists a continuous function $h:[0,1] \times [a,b] \rightarrow D$ such that 

\begin{enumerate}[i]
    \item\!\!. $h(0,t) = \alpha_1 (t)$ \,\textup{if}\, $t \in [a,b]$
    \item\!\!. $h(1,t) = \alpha_2 (t)$ \,\textup{if}\, $t \in [a,b]$
    \item\!\!. $h(s,a) = h(s,b)$ \,\textup{for all}\, $s \in [0,1]$.
\end{enumerate}

The function h is called a homotopy.
\end{definition}

The interpretation of two contours being homotopic is that one can be continuously deformed into the other, without leaving the set $D$. For each fixed $s_0 \in (0,1)$, we can think of $h(s_0,t)$ as tracing out an intermediate curve $C_{s_0}$ as $t$ varies over $[a,b]$. The conditions on the function $h$ asserts that the intermediate contours are closed, and that the beginning and end states of the homotopy $h$ correspond to the homotopic contours $C_1$ and $C_2$, respectively.

\smallskip

Homotopic paths can also tell us something about the structure of the underlying set in which the curves are embedded. For instance, consider an annulus in the plane. Intuitively, it can be thought of as a disk with a concentric hole. Having a hole in the space distinguished the set from a regular disk in the homotopic sense. Notice that a contour surrounding the hole cannot be continuously deformed into a contour that doesn't surround the hole, since any deformation would either require the curve to pass through the hole, leaving the set, or be cut. If a set has no ``holes" in it\textemdash or more formally, the set has the property that any closed curve in the set is homotopic to a point\textemdash then we say that set is \emph{simply connected}.

\medskip

Next we will prove a lemma needed for our strengthened version of the Cauchy integral formula. A justification for its necessity is in order. The idea behind the proof of the upcoming ``new" Cauchy integral formula is simple. We start with the smooth closed contour we wish to integrate over, and shrink it to a new smooth closed contour that lies strictly inside the original contour. In this interior region, we can apply the regular Cauchy integral formula to closed smooth contours as the integrand is analytic by assumption. We then use a limit argument to push the inner contour via homotopy out to the boundary, allowing us to forgo the analyticity requirement of the function on the boundary while maintaining the original result. Unfortunately, the subtleties that arise when dealing with homotopic arguments are insurmountable by the assumed prerequisites for this article. It will be for this reason that we elect to apply a few famous high-powered theorems to prove the lemma.

\smallskip

In short, the following lemma guarantees us that the homotopy needed for the argument exists.

\begin{lemma}
\label{smooth homotopy}
Let $C$ be a smooth closed contour. Then there exists another smooth closed contour lying in $D^{+}$ that is homotopic to $C$. Moreover, the homotopy $h(s,t)$ of the contours has the following properties:

\begin{enumerate}[i]
\item\!\!. For fixed $s_0$, $h(s_0,t)$ describes a closed smooth contour as $t$ varies.
\item\!\!. $h(s,t)$ has non-trivial intersection with $C$ if and only if $s = 1$.
\end{enumerate}
\end{lemma}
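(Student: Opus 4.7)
The plan is to transport the problem to the unit disk by a conformal equivalence, where the analogous homotopy---a radial contraction---is transparent, and then push it back to $\overline{D^{+}}$. First I would apply the Riemann mapping theorem to $D^{+}$, which is bounded, simply connected, and a proper open subset of $\mathbb{C}$ (as the interior of a Jordan curve); this yields a conformal map $\varphi \colon \mathbb{D} \to D^{+}$. Next I would extend $\varphi$ to the closed disk by Carath\'eodory's theorem, obtaining a homeomorphism $\overline{\mathbb{D}} \to \overline{D^{+}}$ which restricts to a homeomorphism $\partial\mathbb{D} \to C$. Reparameterize $\partial\mathbb{D}$ as $e^{i\theta(t)}$, $t\in[a,b]$, so that $\varphi(e^{i\theta(t)})$ agrees with the given parameterization of $C$.

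Now I would fix $r_0 \in (0,1)$ and define
\[
h(s,t) \coloneqq \varphi\!\left( \bigl[r_0 + (1-r_0)s\bigr]\, e^{i\theta(t)} \right), \qquad (s,t) \in [0,1]\times[a,b].
\]
Let $C'$ denote the curve traced by $h(0,\cdot)$, so that $C'$ is the $\varphi$-image of the circle $|z|=r_0$. Continuity of $h$ is clear; closedness of every slice follows from $\theta(a)=\theta(b)$; and for every $s<1$ the slice $h(s,\cdot)$ lies in $\varphi(\mathbb{D}) = D^{+}$ and so does not meet $C$, giving property (ii). The value $s=1$ recovers $C$. It remains only to verify property (i), namely that each slice is a \emph{smooth} closed contour.

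For $s<1$ this is immediate: $\varphi$ is holomorphic with non-vanishing derivative on $\mathbb{D}$, so each slice inherits $C^{1}$ smoothness with non-vanishing tangent from the underlying concentric circle. The genuine obstacle is $s=1$, where one needs $\varphi$ itself to extend $C^{1}$-smoothly to $\overline{\mathbb{D}}$ with $\varphi' \neq 0$ on the boundary, both to recover $C$ as a smooth contour and to control slices with $s$ near $1$. This is supplied by Kellogg's theorem on boundary regularity of conformal maps when $C$ is of class $C^{1,\alpha}$; this is the high-powered input alluded to in the text. For contours that are only $C^{1}$ or piecewise $C^{1}$ (such as the triangle example), one can patch the argument by choosing $C'$ as an inward offset of $C$ constructed via the tubular neighborhood theorem on each smooth arc, then gluing across corners using a convex combination of the one-sided inward normals, and checking continuity of the resulting $h$ directly. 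The hard part, in either variant, is the boundary regularity; the rest is bookkeeping.
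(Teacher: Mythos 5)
Your argument is essentially the paper's: Riemann mapping theorem on $D^{+}$, Carath\'eodory's extension to a homeomorphism of the closures, and a radial contraction $h(s,t)=\varphi\bigl(\rho(s)e^{i\theta(t)}\bigr)$ pulled back through the conformal map (the paper takes $\rho(s)=\tfrac{1+s}{2}$, i.e.\ your $r_0=\tfrac12$), with the only real work being the $\mathcal{C}^1$ boundary regularity and non-vanishing of $\varphi'$ at $s=1$. The one difference is the regularity input: you invoke Kellogg's theorem (which needs $C^{1,\alpha}$) and honestly flag the gap for merely $C^1$ or piecewise-smooth contours, whereas the paper cites Pommerenke's Theorem 3.6 for $\mathcal{C}^1$ curves together with a result from Lang for the non-vanishing of the derivative on $\overline{\mathbb{D}}$, and does not address the corner case at all.
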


\begin{proof}
The first result we need is the Riemann mapping theorem. It states that there exists a bijective holomorphic mapping from any strict open subset of the plane onto the open unit disk. It is also the case that the inverse function is holomorphic, a consequence of the inverse function theorem (see Appendix \ref{Appendix 2} for a precise statement).

Let $f: D^{+} \to \mathbb{D}$ be such a map from the interior of $C$ to the open unit disk. To make use of this special mapping in defining a homotopy, we need to extend the inverse mapping out to the boundary of $\mathbb{D}$, and then prove that the extension is of class $\mathcal{C}^1$. One of the results needed is known as Carathéodory's theorem. It states that $f$ can be extended to a homeomorphism between $\overline{D^{+}}$ and $\overline{\mathbb{D}}$ (see the book \cite{pommerenke2013boundary} by Pommerenke). This homeomorphism is what we will be referring to from here on when we write $f$. By definition, it then follows that $f^{-1} (\partial\mathbb{D}) = C$. 

\smallskip

Now that we have our extension, we need to show that it is continuously differentiable. Here we will use the result in Theorem 3.6 from \cite{pommerenke2013boundary} that says if we have a conformal mapping from the open unit disk to the interior of a $\mathcal{C}^1$ curve, then the first derivative of that mapping has a continuous extension to the boundary. In our case, this means that the derivative of $f^{-1}$ can be continuously extended to the boundary, which implies the extension of $f^{-1}$ is of class $\mathcal{C}^1$. If we define $h:[0,1] \times [0,2\pi] \to D^{+} \cup C$ by 
\smallskip
\[
h(s,t) = f^{-1}\left(\frac{1+s}{2}e^{it}\right)
\]

\noindent
then $h$ is a $\mathcal{C}^1$ homotopy from some smooth contour in $D^{+}$ to the contour $C$.

Since $f^{-1}$ is in particular injective, its derivative is nonzero at all points in $\mathbb{D}$, and consequently in $\overline{\mathbb{D}}$ by Theorem 6.4 in Lang's book \cite{lang2013complex}. Thus, we have shown that for each fixed $s_0$, the contour described by $h(s_0,t)$ is smooth. Finally, $f^{-1}$ being an injective map implies that the curve defined by $h(s,t)$ has non-trivial intersection with the contour $C$ only when $s = 1$.
\end{proof}

We now have the adequate tools to tackle the first part of the generalized Cauchy integral formula.

\begin{theorem}[Cauchy Integral Formula I]
\label{CIF}
Let $C$ be a smooth closed contour in the $\tau$-plane. Suppose that $f(z)$ is analytic in $D^{+}$, and continuous on $C$. Then
\smallskip
\[
\frac{1}{2\pi i} \int_{C} \frac{f(\tau)}{\tau - z}\,d\tau =
\begin{cases}
f(z) &\! \textup{if} \;\: z \in D^{+}\\
0    &\! \textup{if} \;\: z \in D^{-} .
\end{cases}
\]
\end{theorem}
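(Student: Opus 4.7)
The plan is to use Lemma \ref{smooth homotopy} to shrink $C$ to a nearby interior smooth contour $C_s$ on which $f$ is already analytic (so that the classical Cauchy integral formula applies there), and then recover the stated formula by letting $C_s$ return to $C$ along the homotopy, invoking Lemma \ref{pass cont lim to integral} to pass the limit through the integral. Concretely, I would first apply Lemma \ref{smooth homotopy} to obtain a $\mathcal{C}^1$ homotopy $h:[0,1]\times[0,2\pi] \to \overline{D^+}$ with $h(1,\cdot)$ parametrizing $C$ and, for each $s \in [0,1)$, $h(s,\cdot)$ parametrizing a smooth closed contour $C_s \subset D^+$. From the explicit form $h(s,t) = \varphi^{-1}\!\left(\tfrac{1+s}{2}e^{it}\right)$ used in the proof of that lemma, where $\varphi$ denotes the Riemann map of $D^+$ onto $\mathbb{D}$, the interior of $C_s$ equals $\varphi^{-1}(\{|w| < (1+s)/2\}) \subset D^+$, so $f$ is analytic on a neighborhood of the closed interior of $C_s$ for every $s \in [0,1)$.

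For $z \in D^+$: since $|\varphi(z)| < 1$, I would pick $s_0 \in [0,1)$ with $(1+s_0)/2 > |\varphi(z)|$, so that $z$ lies in the interior of $C_s$ for every $s \in [s_0, 1]$. For each such $s < 1$, the classical Cauchy integral formula gives
\[
f(z) \;=\; \frac{1}{2\pi i} \int_{C_s} \frac{f(\tau)}{\tau - z}\,d\tau \;=\; \frac{1}{2\pi i} \int_0^{2\pi} \frac{f(h(s,t))}{h(s,t) - z}\, h_t(s,t)\,dt.
\]
The integrand on the right is continuous on the compact rectangle $[s_0,1]\times[0,2\pi]$: $h$ and $h_t$ are continuous by the $\mathcal{C}^1$ property of the homotopy, $f\circ h$ is continuous because $f$ extends continuously to $\overline{D^+}$, and $|h(s,t) - z|$ is bounded below by a positive constant since $z \notin C_s$ for any $s \in [s_0,1]$. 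The integrand is therefore uniformly continuous, and Lemma \ref{pass cont lim to integral} (applied componentwise to the real and imaginary parts) shows that the parametrized integral depends continuously on $s$. Letting $s \to 1^-$ then identifies its value at $s = 1$, namely $\tfrac{1}{2\pi i}\int_C f(\tau)/(\tau - z)\,d\tau$, with $f(z)$.

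For $z \in D^-$: since the closed interior of each $C_s$ sits inside $D^+$, the point $z$ lies outside $C_s$ together with its interior, so $\tau \mapsto f(\tau)/(\tau - z)$ is analytic throughout a neighborhood of the closed interior of $C_s$, and Cauchy's theorem gives $\int_{C_s} f(\tau)/(\tau - z)\,d\tau = 0$ for every $s \in [0,1)$. The same continuity-in-$s$ argument now applies with the denominator bounded away from zero because $d(z, \overline{D^+}) > 0$, so passing $s \to 1^-$ yields $0$ for the integral over $C$. The main obstacle in executing this plan is the bookkeeping needed to verify uniform continuity of the parametrized integrand and to track the position of $z$ relative to $C_s$; both reduce to routine compactness estimates once the explicit homotopy from Lemma \ref{smooth homotopy} is in hand, which is precisely why the genuine analytic difficulty of the theorem has already been absorbed into that lemma.
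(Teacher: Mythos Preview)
Your proposal is correct and follows essentially the same route as the paper: invoke the homotopy from Lemma~\ref{smooth homotopy}, apply the classical Cauchy integral formula (respectively Cauchy--Goursat) on each intermediate contour $C_s$, and pass to the limit $s\to 1^-$ via Lemma~\ref{pass cont lim to integral} using uniform continuity of the parametrized integrand on a compact rectangle. Your write-up is in fact more careful than the paper's in justifying that $z$ lies inside $C_s$ for $s$ near $1$ and that the denominator stays bounded away from zero.
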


\begin{proof}
For the first case, let $z$ be a point in $D^{+}$. With Lemma \ref{smooth homotopy} in mind, let $h(s,t)$ denote the homotopy between a smooth contour $C_0$ in $D^+$ that contains the point $z$ in its interior, and $C$ itself. Let $C_s$ denote the contour traced out by $h$ for fixed $s$. We can assume that $z$ lies in the interior of each $C_s$. Now, we can write the Cauchy integral as follows:
\smallskip
\[
\frac{1}{2\pi i}\int_{C_s} \frac{f(\tau)}{\tau-z} \,d\tau = \frac{1}{2\pi i}\int_{0}^{2\pi} \frac{f(h(s,t))}{h(s,t)-z} \cdot h_t(s,t) \,dt .
\]

Notice that the integrand on the left hand side is analytic on and inside the contour $C_s$. This allows us to apply the usual Cauchy integral formula to the expression. In addition, the integrand on the right hand side is uniformly continuous on the rectangle $[s,1]\times[0,2\pi]$. Taking the limit as $s \to 1^{-}$ and applying Lemma \ref{pass cont lim to integral},
\begin{align*}
f(z) = \frac{1}{2\pi i}\left[\lim_{s \to 1^{-}}\int_{C_s} \frac{f(\tau)}{\tau-z} \,d\tau\right] & = \frac{1}{2\pi i}\int_{0}^{2\pi} \lim_{s \to 1^{-}}\left[ \frac{f(h(s,t))}{h(s,t)-z} \cdot h_t(s,t)\right] \,dt \\
& = \frac{1}{2\pi i}\int_{0}^{2\pi}  \frac{f(h(1,t))}{h(1,t)-z} \cdot h_t(1,t) \,dt \\
& = \frac{1}{2\pi i}\int_{C} \frac{f(\tau)}{\tau-z} \,d\tau
\end{align*}

\noindent
which proves the result for points in the region $D^{+}$.

\smallskip

Now, if $z$ is in the region $D^{-}$, we can perform the same limit argument, although this time using the Cauchy-Goursat theorem on the intermediate contours. In this case, the integral evaluates to 0, and the same is true in the limit. This completes the proof.
\end{proof}

\smallskip

This next result is a version of the previous Cauchy integral formula for functions analytic on the \emph{exterior} of a closed contour.

\begin{theorem}[Cauchy Integral Formula II]
\label{CIF analytic in D-}
Let $C$ be a smooth closed contour in the $\tau$-plane. Suppose that the function $f(z)$ is analytic in $D^{-}$, has a holomorphic extension to the point at infinity, and is continuous on $C$. Then
\smallskip
\[
\frac{1}{2\pi i} \int_{C} \frac{f(\tau)}{\tau - z}\,d\tau =
\begin{cases}
f(\infty) &\! \textup{if} \;\: z \in D^{+}\\
-f(z) + f(\infty) &\! \textup{if} \;\: z \in D^{-} .
\end{cases}
\]

\end{theorem}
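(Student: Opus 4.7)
The plan is to reduce both cases to an application of Theorem \ref{CIF} on the annular region between $C$ and a large enclosing circle, and then push that circle out to infinity. To set this up, fix $R > 0$ large enough that $\overline{D^{+}} \cup \{z\}$ lies inside the open disk $B_R = \{\tau : |\tau| < R\}$, and let $C_R$ denote the positively oriented circle $|\tau| = R$. The annulus $\Omega_R = B_R \setminus \overline{D^{+}}$ is contained in $D^{-}$, where $f$ is analytic, and $f$ is continuous on $\partial\Omega_R = C \cup C_R$. Although $\Omega_R$ is doubly connected, a keyhole cut from a point of $C$ to a point of $C_R$ (avoiding $z$) turns it into a simply connected region whose boundary is a single piecewise smooth closed contour, and the two sides of the cut contribute oppositely oriented integrals that cancel. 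Applying Theorem \ref{CIF} to this cut region yields
\[
\frac{1}{2\pi i}\int_{C_R} \frac{f(\tau)}{\tau - z}\,d\tau \;-\; \frac{1}{2\pi i}\int_C \frac{f(\tau)}{\tau - z}\,d\tau \;=\; \begin{cases} f(z) & \textup{if } z \in D^{-}, \\ 0 & \textup{if } z \in D^{+}, \end{cases}
\]
where the minus sign reflects that $C$ is traversed clockwise as a boundary piece of $\Omega_R$, and the second case is just the statement of Theorem \ref{CIF} when $z$ lies outside the keyhole region.

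Next I would evaluate the limit $\displaystyle\lim_{R \to \infty} \frac{1}{2\pi i}\int_{C_R} \frac{f(\tau)}{\tau - z}\,d\tau$ and show it equals $f(\infty)$. Write
\[
\frac{f(\tau)}{\tau-z} \;=\; \frac{f(\infty)}{\tau - z} \;+\; \frac{f(\tau) - f(\infty)}{\tau - z}.
\]
The first term integrates to $2\pi i\, f(\infty)$ by the elementary Cauchy integral formula, since $z$ is enclosed by $C_R$. For the second, Definition \ref{definition_HolomorphicExtensionToInfinity} tells us that $f(1/w) - f(\infty)$ is analytic at $w = 0$ with a zero there, so $|f(\tau) - f(\infty)| \leq M/|\tau|$ for all sufficiently large $|\tau|$. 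The estimation lemma then gives
\[
\left|\frac{1}{2\pi i}\int_{C_R} \frac{f(\tau) - f(\infty)}{\tau - z}\,d\tau\right| \;\leq\; \frac{1}{2\pi}\cdot \frac{M/R}{R - |z|}\cdot 2\pi R \;=\; \frac{M}{R - |z|},
\]
which vanishes as $R \to \infty$.

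Passing to the limit in the annular identity then dispatches both cases at once. For $z \in D^{-}$ one obtains $f(\infty) - \tfrac{1}{2\pi i}\int_C f(\tau)/(\tau-z)\,d\tau = f(z)$, which rearranges to the claimed value $-f(z) + f(\infty)$; for $z \in D^{+}$ the identity collapses directly to $f(\infty) = \tfrac{1}{2\pi i}\int_C f(\tau)/(\tau-z)\,d\tau$.

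The main obstacle I expect is the formal justification of the keyhole cut, since Theorem \ref{CIF} as stated applies only to a simply connected interior bounded by a single smooth closed contour; one can make this rigorous by taking two parallel cuts at distance $\delta > 0$, applying the theorem for each $\delta$, and invoking uniform continuity of the integrand on compact subsets of $\Omega_R$ to see that the contributions along the two cuts cancel in the $\delta \to 0$ limit. A secondary but routine point is verifying the decay bound $f(\tau) - f(\infty) = O(1/|\tau|)$ directly from the removable singularity of $f(1/w)$ at the origin.
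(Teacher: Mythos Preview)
Your proof is correct and shares the paper's overall architecture: enclose everything in a large circle $C_R$, use a keyhole cut on the annular region between $C$ and $C_R$ to apply Theorem \ref{CIF}, and then handle the integral over $C_R$ separately. The one genuine difference is in that last step. You split off the constant $f(\infty)$ and push $R \to \infty$ with an ML-estimate, which requires the decay bound $f(\tau)-f(\infty)=O(1/|\tau|)$. The paper instead evaluates the $C_R$ integral \emph{exactly} for any fixed $R$ by the substitution $\zeta = 1/\tau$: this sends $C_R$ to the small circle $|\zeta|=1/R$, and since $g(\zeta)=f(1/\zeta)$ is analytic on the closed disk $|\zeta|\le 1/R$ by the removable-singularity hypothesis, one more application of Theorem \ref{CIF} gives the value $g(0)=f(\infty)$ directly, with no limit in $R$ needed. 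The paper's route is slicker in that it avoids any asymptotic estimate, while yours is arguably more elementary in that it sidesteps the bookkeeping of the change of variables and the orientation reversal under $\tau\mapsto 1/\tau$.
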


\begin{proof}
For any fixed $z$ in the plane and every point $\tau$ on $C$, there is $R > 0$ with the property that both $\labs z \rabs < R$ and $\labs \tau \rabs < R$. Let $C_R$ denote the circle of radius $R$ centred at the origin. Now, both the contour $C$ and the point $z$ lie entirely within the circle $C_R$. Write:
\smallskip
\[
\frac{1}{2\pi i} \int_{C} \frac{f(\tau)}{\tau - z}\,d\tau = \frac{1}{2\pi i} \int_{C - C_R} \frac{f(\tau)}{\tau - z}\,d\tau + \frac{1}{2\pi i} \int_{C_R} \frac{f(\tau)}{\tau - z}\,d\tau .
\]

First, we evaluate the integral on the right-hand side that is taken over $C-C_R$. We connect the contour $C$ and the circle $-C_R$ with a line that does not pass through the point $z$. This line is treated as an added path of integration that contributes nothing to the integral so long as we integrate along it in both possible directions of traversal. What this expression now represents is an integration along the circle in a clockwise direction, down the newly created line to the contour $C$, around $C$ in a counterclockwise direction, back up the line, and continuing along the circle until reaching the starting point. 

If $z\in D^{+}$, then $z$ is not enclosed by the path of integration.  Applying the Cauchy integral formula as stated in Theorem $\ref{CIF}$, the integral evaluates to 0. If $z\in D^{-}$, then by design our path construction encloses the point. Reversing the direction of integration and applying Theorem $\ref{CIF}$ gives us a value of $-f(z)$. Now for the integral over the circle $C_R$. Let $\zeta = 1/\tau$. Then $d\zeta = -\zeta^2d\tau$, and we can write
\smallskip
\[
\frac{1}{2\pi i} \int_{C_R} \frac{f(\tau)}{\tau - z}\,d\tau  = -\frac{1}{2\pi i} \int_{C_{\frac{1}{R}}} \frac{f\left(\frac{1}{\zeta}\right)}{\left(\frac{1}{\zeta} - z\right) \zeta^2}\,d\zeta .
\]

Define $g(\zeta) \coloneqq f(1/\zeta)$. As $f$ is analytic on the unbounded complement of the circle $C_R$, $g$ is analytic on the bounded complement of the circle $C_{1/R}$, including the point 0 by our assumption that $f$ can be holomorphically extended to the point at infinity. In fact, $g$ is analytic on the contour $C_{1/R}$ as well.

When 0 and $\infty$ are in different connected components with respect to a contour in the plane, a transformation of the form $\tau \mapsto 1/\tau$ reverses the direction the parameterization of said contour. In our case, this means that the circle $C_{1/R}$ is traversed clockwise, and hence $-C_{1/R}$ is traversed counterclockwise. The integral can now be written as:
\smallskip
\[
\frac{1}{2\pi i} \int_{C_R} \frac{f(\tau)}{\tau - z}\,d\tau  = \frac{1}{2\pi i} \int_{-C_{\frac{1}{R}}} \frac{g(\zeta)}{\left(1 - \zeta z\right) \zeta}\,d\zeta . \tag{1}
\]

The term $1-\zeta z$ in the denominator of the integrand is nonzero in the interior of $-C_{1/R}$, due to the fact that $1/\labs z\rabs > 1/R$. Thus, the function $g(\zeta)/(1-\zeta z)$ is analytic on the interior of the contour of integration. However, the interior of the circle contains the origin, so $\zeta = 0$ is a singularity of the integrand in the interior of $-C_{1/R}$. Applying Theorem \ref{CIF} to (1):
\smallskip
\[
\frac{1}{2\pi i} \int_{C_R} \frac{f(\tau)}{\tau - z}\,d\tau = \frac{g(0)}{1-0z} = f(\infty) .
\]

Notice that for this second integral, the calculation is independent of the component in which $z$ resides. Combining the results completes the proof.
\end{proof}

The Cauchy integral formula links the boundary behaviour of a function to the two domains lying on either side of the contour $C$. What we want to do next is shift our perspective of the Cauchy integral formula to not merely being a convenient method solving integrals, but as function itself with a unique reproducing property. We sum this up with the following definition of a special singular integral operator called the \emph{Cauchy type integral}.

\begin{definition}[Cauchy Type Integral]
Let $C$ be a smooth contour in the $\tau$-plane. Suppose that $\varphi: C \to \mathbb{C}$ is a continuous function on $C$. Then the function
\smallskip
\[
\Phi(z) = \frac{1}{2\pi i} \int_{C} \frac{\varphi (\tau)}{\tau -z}\,d\tau
\]

\noindent
is called the Cauchy type integral. $\varphi (\tau)$ is referred to as its density, and $1/(\tau-z)$ the (Cauchy) kernel. 
\end{definition}

Looking ahead, the Cauchy type integral will be the main object we analyze. More specifically, we are going to apply analytic techniques in an attempt to understand its behaviour near the contour of integration. To do so, we must gain a firm understanding of it in a more tractable region on the plane.

This next result shows that the function $\Phi(z)$ is analytic everywhere in the plane except for points along the contour $C$. We refer to this line as the \emph{singular line} of the Cauchy type integral.

\begin{theorem}
\label{CTI is analytic}
Let $C$ be a smooth contour in the $\tau$-plane and let $D \subset \mathbb{C}$ be a domain in the z-plane. Let $f: C \times D \rightarrow \mathbb{C}$ be a complex-valued function of two complex variables. Assume that the following conditions hold:
\begin{enumerate}[i]
    \item\!\!. $f(\tau, z)$ is continuous
    \item\!\!. $f(\tau, z)$ analytic with respect to $z$
    \item\!\!. $f_{z} (\tau, z)$ is continuous in $\tau$ when the $z$-variable is fixed.
\end{enumerate}

Then the integral-defined function
\smallskip
\[
F(z) = \int_{C} f(\tau, z)\,d\tau
\]

\noindent 
is analytic in $D$, and the expression for $F'$ is obtained by differentiating under the integral sign. 
\end{theorem}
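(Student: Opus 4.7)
My plan is to establish both analyticity of $F$ and the differentiation formula $F'(z_0) = \int_C f_z(\tau, z_0) \, d\tau$ in one stroke by directly estimating the difference quotient of $F$ at an arbitrary $z_0 \in D$. The key ingredient is the classical Cauchy integral formula applied to $f(\tau, \cdot)$ for each fixed $\tau \in C$; this is available because hypothesis (ii) makes $f$ analytic in $z$ on $D$, and hypothesis (i) gives continuity up to the boundary of any closed disk lying in $D$. Hypothesis (iii) is needed only to guarantee that the candidate integral $\int_C f_z(\tau, z_0) \, d\tau$ is a well-defined contour integral.

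First I would fix $z_0 \in D$ and pick $r > 0$ such that the closed disk of radius $2r$ about $z_0$ lies inside $D$. Let $\gamma$ denote the positively oriented circle $|w - z_0| = 2r$. Applying the Cauchy integral formula and its derivative form to $f(\tau, \cdot)$ and combining the resulting expressions over a common denominator, for $|z - z_0| < r$ one obtains the identity
\[
\frac{f(\tau, z) - f(\tau, z_0)}{z - z_0} - f_z(\tau, z_0) = \frac{z - z_0}{2\pi i} \int_\gamma \frac{f(\tau, w)}{(w - z)(w - z_0)^2} \, dw.
\]
The point of the algebra is that an explicit factor of $z - z_0$ has been isolated outside the inner contour integral.

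Next I would bound the right-hand side uniformly in $\tau$. The set $C \times \gamma$ is compact ($C$ is rectifiable and is the continuous image of a closed interval), so by hypothesis (i) the modulus $|f|$ attains a maximum $M$ there. For $w \in \gamma$ and $|z - z_0| < r$, the reverse triangle inequality gives $|w - z| \geq r$, while $|w - z_0| = 2r$; the estimation lemma then bounds the displayed right-hand side by a constant $K$ (depending only on $M$, $r$, and $\ell(\gamma)$) times $|z - z_0|$, uniformly in $\tau \in C$. Substituting into the difference quotient for $F$ gives
\[
\frac{F(z) - F(z_0)}{z - z_0} - \int_C f_z(\tau, z_0) \, d\tau = \int_C \left( \frac{f(\tau, z) - f(\tau, z_0)}{z - z_0} - f_z(\tau, z_0) \right) d\tau,
\]
and a second application of the estimation lemma shows the modulus is at most $K \ell(C) |z - z_0|$, which vanishes as $z \to z_0$. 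This proves simultaneously that $F$ is complex differentiable at $z_0$ and that $F'$ has the claimed form.

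The main obstacle is really just careful bookkeeping: identifying the correct compact set on which to bound $|f|$ and keeping the two applications of the estimation lemma (first over $\gamma$, then over $C$) separate. There is no deep analytic difficulty once the algebraic identity in the first display is in hand. An alternative route via Morera's theorem would yield analyticity more quickly but would require additional work to identify $F'$ explicitly, so I favour the direct difference-quotient approach here.
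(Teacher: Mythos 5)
Your argument is correct, and it takes a genuinely different route from the paper's. The paper proceeds ``softly'': it defines the auxiliary function $g(\tau,z)$ equal to the difference quotient $\bigl(f(\tau,z)-f(\tau,z_0)\bigr)/(z-z_0)$ for $z\neq z_0$ and to $f_z(\tau,z_0)$ at $z=z_0$, runs an $\varepsilon$--$\delta$ argument using all three hypotheses to show $g$ is continuous (hence uniformly continuous on the compact set $C\times K$), and then invokes Corollary \ref{pass cont lim to contour integral} to interchange the limit $z\to z_0$ with the integral over $C$. You instead represent the quantity $\frac{f(\tau,z)-f(\tau,z_0)}{z-z_0}-f_z(\tau,z_0)$ explicitly via the Cauchy integral formula on the circle $\gamma$, which isolates a factor of $z-z_0$ and yields a quantitative bound $K\lvert z-z_0\rvert$ that is uniform in $\tau$; two applications of the estimation lemma then finish the job. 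Your identity checks out (partial fractions give $\frac{1}{(w-z)(w-z_0)}-\frac{1}{(w-z_0)^2}=\frac{z-z_0}{(w-z)(w-z_0)^2}$), and the compactness of $C\times\gamma$ together with hypothesis (i) legitimately supplies the uniform bound $M$ on $\lvert f\rvert$. What your approach buys is an explicit rate of convergence of the difference quotient and a proof that leans more heavily on hypothesis (ii) and less on (iii): indeed, in your setup the inner integral representation of $f_z(\tau,z_0)$ shows its continuity in $\tau$ automatically, so (iii) is only invoked, as you say, to make the target integral a priori well defined. What the paper's approach buys is independence from the Cauchy integral machinery (it uses only continuity, compactness, and the interchange lemma already proved), which fits the expository arc of the chapter. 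Both are complete proofs.
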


\begin{proof}
Fix $z_0 \in D$, and let $K \subset D$ be a closed, bounded disk centred at $z_0$. Define a new function $g: C \times K \to \mathbb{C}$ by
\smallskip
\[
g(\tau,z) =
\begin{cases}
\frac{f(\tau,z) - f(\tau,z_0)}{z-z_0} &\! \textup{if} \;\: z \neq z_0\\
f_z(\tau,z_0)    &\! \textup{if} \;\: z = z_0 .
\end{cases}
\]

\noindent
First, it is obvious that $g$ is continuous at points of the form $(\tau,z)$ when $z \neq z_0$. We need to be slightly more careful with checking continuity at points of the form $(\tau, z_0)$. 

Let $(\tau',z_0) \in C \times K$, and let $\varepsilon > 0$ be given. By the above observation and analyticity of $f$ in $z$, we can arrange for $\delta_1 > 0$ such that
\smallskip
\[
\labs \frac{f(\tau,z)-f(\tau,z_0)}{z-z_0} - \frac{f(\tau',z)-f(\tau',z_0)}{z-z_0} \rabs < \frac{\varepsilon}{2} \;\;\;\text{and}\;\;\; \labs \frac{f(\tau',z)-f(\tau',z_0)}{z-z_0} - f_z(\tau',z_0) \rabs < \frac{\varepsilon}{2}
\]

\noindent
whenever $0< \labs z - z_0 \rabs < \delta_1$ and $\labs \tau - \tau' \rabs < \delta_1$. It follows that
\smallskip
\[
\labs \frac{f(\tau,z)-f(\tau,z_0)}{z-z_0} - f_z(\tau',z_0) \rabs < \varepsilon
\]

\noindent
when $z$ and $\tau$ satisfies these conditions. Now by continuity of $f_z$ in $\tau$, there is $\delta_2 > 0$ such that 
\smallskip
\[
\labs f_z(\tau, z_0) - f_z(\tau', z_0) \rabs < \varepsilon
\]

\noindent
whenever $\labs \tau - \tau' \rabs < \delta_2$. Taking $\delta = \min\{\delta_1, \delta_2\}$, it is clear that the bound
\smallskip
\[
\labs g(\tau, z) - g(\tau', z_0) \rabs < \varepsilon
\]

\noindent
holds so long as $\| (\tau,z) - (\tau',z_0) \| < \delta$. Hence $g$ is continuous, and moreover uniformly continuous since $C\times K$ is compact. Thus Corollary \ref{pass cont lim to contour integral} can be applied to obtain
\smallskip
\begin{align*}
\lim_{z \to z_0}\left[ \frac{F(z)-F(z_0)}{z-z_0} - \int_{C} f_z(\tau,z_0) \,d\tau \right] &= \lim_{z \to z_0}\Bigg[  \int_{C} \left[ \frac{f(\tau,z)-f(\tau,z_0)}{z-z_0}-f_z(\tau,z_0) \right] \,d\tau \Bigg] \\ &= \lim_{z \to z_0}\Bigg[ \int_{C} \left[ \,g(\tau,z) - g(\tau,z_0) \right] \,d\tau \Bigg] \\
&= 0
\end{align*}

\noindent
proving analyticity of $F$ in $D$.
\end{proof}

\begin{corollary}
The Cauchy type integral function is analytic everywhere except for points that coincide with the contour of integration.
\end{corollary}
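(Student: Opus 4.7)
The plan is to deduce this as an essentially immediate corollary of Theorem \ref{CTI is analytic}, applied with the integrand $f(\tau,z) = \varphi(\tau)/(\tau-z)$ (absorbing the constant $1/(2\pi i)$ at the end). The main task is just to check that the three hypotheses of the theorem hold on a suitable domain $D$ avoiding $C$.

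First I would fix an arbitrary $z_0 \in \mathbb{C}\setminus C$. Since $C$ is closed and bounded (being rectifiable and, as the image of a compact interval under a continuous map, compact), we have $d := \operatorname{dist}(z_0, C) > 0$. Let $D$ be the open disk of radius $d/2$ centred at $z_0$; then $D$ is a domain disjoint from $C$, and in fact $|\tau - z| \geq d/2$ for all $(\tau,z) \in C \times D$. This uniform lower bound is exactly what makes the three hypotheses trivial to verify.

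Next I would check the hypotheses of Theorem \ref{CTI is analytic} for $f(\tau,z) = \varphi(\tau)/(\tau - z)$ on $C \times D$. For (i), continuity of $f$ follows from continuity of $\varphi$ on $C$ and of $1/(\tau-z)$ on $C \times D$ (the denominator being bounded away from zero). For (ii), for each fixed $\tau \in C$, the function $z \mapsto \varphi(\tau)/(\tau - z)$ is a rational function in $z$ whose only singularity is at $z = \tau \notin D$, hence it is analytic on $D$. For (iii), a direct computation gives $f_z(\tau,z) = \varphi(\tau)/(\tau - z)^2$, which is continuous in $\tau$ for any fixed $z \in D$ since $\varphi$ is continuous on $C$ and the denominator is nonzero there.

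Therefore Theorem \ref{CTI is analytic} yields that $\int_C f(\tau,z)\,d\tau$ is analytic in $D$, and dividing by the constant $2\pi i$ shows $\Phi(z)$ is analytic in $D$, in particular at $z_0$. Since $z_0 \in \mathbb{C}\setminus C$ was arbitrary, $\Phi$ is analytic on $\mathbb{C}\setminus C$. There is really no substantial obstacle here; the work was already done in Theorem \ref{CTI is analytic}, and this proof amounts to the observation that the Cauchy kernel satisfies its hypotheses as long as we stay away from the singular line.
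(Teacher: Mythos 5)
Your proof is correct and is precisely the argument the paper intends, since its own proof is left to the reader as an application of Theorem \ref{CTI is analytic}. Localizing to a small disk $D$ with $\operatorname{dist}(z_0,C)>0$ (using compactness of $C$) and verifying the three hypotheses there is exactly the right way to fill in the details.
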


\begin{proof}
The details are left to the reader.
\end{proof}

For the case that the contour $C$ is closed, the function $\Phi$ representing the Cauchy type integral can more accurately be thought of as splitting into two different functions. If $z \in D^{+}$ we use the notation $\Phi^{+}$, and if $z \in D^{-}$ we use $\Phi^{-}$. The reason for this distinction is that in general we cannot analytically continue one into the other across $C$.

\smallskip

At the moment, the behaviour of the Cauchy type integral is mysterious at points along the contour. This problem is analogous to the determining the convergence and divergence of improper integrals encountered in elementary calculus. Our main goal in the upcoming sections will be to clarify this behaviour.

\smallskip

Before moving on, let us note the following important property:

\begin{proposition}
\label{CTI vanish at infinity}
The Cauchy type integral function $\Phi$ can be holomorphically extended to the point at infinity, where it takes on a value of 0.
\end{proposition}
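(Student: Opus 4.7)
The plan is to show directly that $|\Phi(z)| \to 0$ as $|z| \to \infty$; this will imply that $\Phi(1/z)$ extends continuously (and hence, being analytic on a punctured neighborhood of $0$, holomorphically) to $z = 0$ with value $0$, which is precisely the content of Definition \ref{definition_HolomorphicExtensionToInfinity}.

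The first ingredient is that $C$ is compact in $\mathbb{C}$ — being the continuous image of the compact parameter interval under the parameterization — so there is some $R > 0$ with $|\tau| \leq R$ for every $\tau \in C$, and the continuous function $\varphi$ attains a finite maximum $M = \max_{\tau \in C}|\varphi(\tau)|$ on $C$. For any $z$ with $|z| > R$ and any $\tau \in C$, the reverse triangle inequality gives $|\tau - z| \geq |z| - R > 0$, so the integrand of $\Phi$ is uniformly small on $C$ when $|z|$ is large.

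Next I would invoke the estimation lemma: for $|z| > R$,
$$|\Phi(z)| \;=\; \left|\frac{1}{2\pi i}\int_{C}\frac{\varphi(\tau)}{\tau - z}\,d\tau\right| \;\leq\; \frac{1}{2\pi}\cdot\frac{M}{|z| - R}\cdot \ell(C),$$
and the right-hand side tends to $0$ as $|z| \to \infty$. Substituting $z \mapsto 1/z$, this says that $\Phi(1/z) \to 0$ as $z \to 0$, so the singularity at the origin is removable by the standard removable singularity theorem, and the extended value is $0$.

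The only mild subtlety — and the only thing resembling an obstacle — is that Definition \ref{definition_HolomorphicExtensionToInfinity} is phrased for the exterior $D^{-}$ of a \emph{bounded simple closed} contour, whereas the present proposition allows $C$ to be an open smooth contour. Because $C$ is rectifiable, it is bounded in either case, so $\Phi$ is analytic on the entire unbounded complement of $C$ (by the corollary to Theorem \ref{CTI is analytic}); in particular $\Phi$ is analytic on some neighborhood of $\infty$ on the Riemann sphere, and the definition applies verbatim once one interprets ``analytic on $D^{-}$'' as ``analytic on the unbounded component of the complement of $C$.''
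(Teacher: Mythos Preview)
Your proof is correct and takes a more elementary route than the paper's. You bound $|\Phi(z)|$ directly via the estimation lemma and then invoke Riemann's removable singularity theorem. The paper instead expands the Cauchy kernel as a geometric series $\frac{1}{\tau - z} = -\sum_{n\geq 1}\tau^{n-1}/z^{n}$ for $|z|>|\tau|$, integrates term by term, and obtains the explicit Laurent expansion $\Phi(z)=\sum_{n\geq 1}a_n/z^{n}$ with $a_n=-\frac{1}{2\pi i}\int_C \tau^{n-1}\varphi(\tau)\,d\tau$, from which removability of the singularity of $\Phi(1/z)$ at the origin is read off directly. Your argument is shorter and needs nothing beyond the ML inequality; the paper's approach costs a little more but produces the Laurent coefficients as a byproduct, and the kernel-expansion technique itself is reused later in the text.
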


\begin{proof}
We are going to build a series representation of the Cauchy type integral in the neighbourhood of infinity to derive the result. Fix $z \in D^{-}$ far enough from the origin so that it satisfies $\labs z \rabs > \labs \tau\rabs$ for every point $\tau$ on the contour $C$. Expanding the kernel into a geometric series:
\begin{align*}
\frac{1}{\tau-z} &= -\frac{1}{z} \cdot \frac{1}{1-\frac{\tau}{z}} \\
& = -\frac{1}{z} \sum_{n=0}^{\infty} \left( \frac{\tau}{z} \right)^n \\
& = -\sum_{n=1}^{\infty} \frac{\tau^{n-1}}{z^n}
\end{align*}

\noindent
This series will converge for $\labs \tau \rabs < \labs z \rabs$. Now we multiply through by the density over a factor of $2\pi i$, and then integrate over $C$ with respect to $\tau$ to obtain the expression
\smallskip
\[
\frac{1}{2\pi i} \int_{C} \frac{\varphi(\tau)}{\tau-z} \,d\tau = -\frac{1}{2\pi i} \int_{C} \left[ \varphi(\tau) \sum_{n=1}^{\infty} \frac{\tau^{n-1}}{z^n} \right]\,d\tau .
\]

On the left hand side, we have the Cauchy type integral $\Phi(z)$. For the right hand side, note that we can pull the sum out of the integrand whenever integrating the product of a convergent power series with a continuous function (see Section 65 of \cite{brown2009complex}). This technique allows one to integrate a power series term by term. In view of our choice of $z$ and continuity of the density along the contour $C$, we arrive at
\smallskip
\[
\Phi(z) = \sum_{n=1}^{\infty} \frac{a_n}{z^n}
\]

\noindent
where 
\smallskip
\[
a_n = -\frac{1}{2\pi i} \int_{C} \tau^{n-1}\varphi(\tau) \,d\tau .
\] 

\noindent
Since $\Phi\left(1/z\right) = \sum_{n=1}^\infty a_n z^n$ has a removable singularity at $z=0$, $\Phi$ can be holomorphically extended to the point at infinity, and $\Phi(\infty)$ has the announced value.
\end{proof}

\section{The Hölder Condition}

To get a handle on the Cauchy type integral along the singular line, we need a particularly nice density function. What is meant by ``nice" in this context will be the focus of this section. Just as uniform continuity is a strong form of continuity in the sense that it requires that a $\delta$ can be found that works over the entire domain, uniform continuity can be strengthened by restricting how \emph{quickly} function values can approach each other.

\begin{definition}[The Hölder Condition]
\label{Holder condition}
Let $D$ be a subset of the complex plane, and let $\varphi: D\rightarrow \mathbb{C}$ be a complex-valued function defined on $D$. We say that $\varphi$ satisfies the Hölder condition on $D$ if there exists constants $A > 0$ and $0 <\lambda \leq 1$ having the property that
\smallskip
\[
\labs\varphi(z) - \varphi(w)\rabs\leq A\labs z - w\rabs^{\lambda}
\]

\noindent 
whenever $z$, $w \in D$. Alternatively, we say that $\varphi$ is $\lambda$-Hölder, or simply Hölder continuous. We refer to $A$ as the Hölder constant, and $\lambda$ as the Hölder index.
\end{definition}

\begin{remark}
Values of $\lambda$ exceeding 1 are not under consideration due to redundancy. For if $\varphi$ satisfies the above condition for some $\lambda > 1$, then rearranging the inequality we find that
\smallskip
\[
0 \leq \labs \frac{\varphi(z) - \varphi(w)}{z - w} \rabs \leq A\labs z-w\rabs^{\lambda-1} .
\]

As $z\rightarrow w$, the difference quotient goes to 0 by the squeeze theorem. This implies that $\varphi$ is differentiable on $D$, where $\varphi' \equiv 0$, forcing $\varphi$ to be constant on each connected component of $D$. 
\end{remark}

As the naming convention may suggest, the Hölder index is the essential characteristic of the condition, and not so much the constant. Its size is what classifies the Hölder functions, and ultimately controls the behaviour of the function.

Note that the larger the value of $\lambda$, the more severe the restriction. Thus the case $\lambda = 1$ encapsulates the smallest class of these functions, which are often referred to as being \emph{Lipschitz continuous}, or satisfying the Lipschitz condition.

\smallskip

Next, we establish some basic properties of Hölder continuous functions.

\begin{proposition}
\label{holder implies uniform}
A Hölder continuous function $\varphi$ is in particular uniformly continuous.
\end{proposition}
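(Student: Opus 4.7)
The plan is to derive uniform continuity directly from the definition of the Hölder condition by solving the inequality $A|z-w|^\lambda < \varepsilon$ for $|z-w|$ and using the result as the choice of $\delta$. Since uniform continuity requires a $\delta$ that depends only on $\varepsilon$ (not on the basepoint), and the Hölder constants $A$ and $\lambda$ are global to the domain $D$, this dependence is automatic.

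Concretely, I would let $\varepsilon > 0$ be given and propose the candidate
\[
\delta = \left( \frac{\varepsilon}{A} \right)^{1/\lambda}.
\]
Then for any $z, w \in D$ with $|z - w| < \delta$, the Hölder condition from Definition \ref{Holder condition} yields
\[
|\varphi(z) - \varphi(w)| \leq A |z - w|^\lambda < A \cdot \frac{\varepsilon}{A} = \varepsilon,
\]
which is precisely the definition of uniform continuity on $D$. Note that the hypothesis $0 < \lambda \leq 1$ ensures that the exponent $1/\lambda$ is well-defined and positive, so $\delta$ is a genuine positive number, and the assumption $A > 0$ ensures we can divide by $A$.

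There is no real obstacle here; the only subtlety worth flagging in the write-up is that the Hölder constants $A$ and $\lambda$ are fixed independently of the choice of points, which is what makes the chosen $\delta$ uniform. The proof will be short — essentially one displayed inequality after the choice of $\delta$ is declared.
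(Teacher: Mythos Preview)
Your proposal is correct and essentially identical to the paper's own proof: both choose $\delta = (\varepsilon/A)^{1/\lambda}$ and apply the H\"older inequality directly to obtain $|\varphi(z)-\varphi(w)| \leq A|z-w|^{\lambda} < \varepsilon$. There is nothing to add.
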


\begin{proof}
Let  $\lambda$ and $A$ denote the Hölder index and constant of $\varphi$ in a set $D$, respectively, and let $\varepsilon > 0$ be given. Choose $\delta = \left(\varepsilon/A\right)^{1/\lambda}$. If $z$, $w \in D$ satisfy $\labs z-w\rabs < \delta$, by Hölder continuity of $\varphi$ in $D$ we have
\smallskip
\[
\labs \varphi(z) - \varphi(w) \rabs \leq A\labs z-w\rabs^{\lambda} < \varepsilon .
\]
\end{proof}

\begin{example}
The prototypical example of a uniformly continuous function that does not satisfy the Hölder condition is the following. Let $f:\left[0,\frac{1}{2}\right] \to \mathbb{R}$ be defined as
\smallskip
\[
f(x) =
\begin{cases}
\frac{1}{\ln(x)} &\text{if} \;\; 0 < x \leq \frac{1}{2} \\
0 &\text{if} \;\; x=0 .
\end{cases}
\]

It is readily observed that $f$ constitutes a uniformly continuous function. However, for any combination of constant $A > 0$ and index $0 < \lambda \leq 1$, we can always violate the bound in Definition \ref{Holder condition}. Recall from elementary calculus that for every $\lambda > 0$,
\smallskip
\[
\lim_{x \to 0^{+}} x^{\lambda}\ln(x) = 0.
\]

\noindent
In words, the positive power functions completely dominate the logarithm near zero. So for any constant $A>0$, we can certainly find $x$ sufficiently close to 0 so that
\smallskip
\[
\labs x^{\lambda} \ln(x) \rabs < \frac{1}{A}
\]

\noindent
whence
\smallskip
\[
\labs f(x) - f(0) \rabs = \labs \frac{1}{\ln(x)} \rabs > A\labs x \rabs ^{\lambda} .
\]
\end{example}

\begin{proposition}
Suppose that $\alpha$ and $\beta$ satisfy the Hölder condition on a smooth contour $C$, with Hölder indices $\lambda$ and $\mu$, respectively. Then both their sum and product satisfy the Hölder condition on $C$. If additionally, $\beta$ does not vanish on $C$, then their quotient satisfies the Hölder condition as well. In each case, the new Hölder index will be the smaller of the two original indices.
\end{proposition}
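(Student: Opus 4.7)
The plan is to reduce everything to one common Hölder index using the fact that the contour $C$ is rectifiable and hence bounded, which gives a finite diameter $d = \sup\{|z-w| : z,w \in C\}$. Without loss of generality, assume $\lambda \leq \mu$. Then for any $z,w \in C$ we have $|z-w|^{\mu} = |z-w|^{\lambda}\,|z-w|^{\mu-\lambda} \leq d^{\mu-\lambda}|z-w|^{\lambda}$, so any function that is $\mu$-Hölder on $C$ is automatically $\lambda$-Hölder on $C$ (with a possibly larger constant). This observation is the key technical trick and the step I expect to be doing the real work.

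For the sum, I would simply apply the triangle inequality to $|(\alpha+\beta)(z)-(\alpha+\beta)(w)|$, bound each piece by its Hölder estimate, and then convert the $\mu$-term to a $\lambda$-term using the diameter reduction above. The resulting Hölder constant will be $A + B d^{\mu-\lambda}$.

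For the product, I would use the standard add-and-subtract identity
\[
\alpha(z)\beta(z) - \alpha(w)\beta(w) = \alpha(z)\bigl[\beta(z) - \beta(w)\bigr] + \beta(w)\bigl[\alpha(z)-\alpha(w)\bigr].
\]
The factors $|\alpha(z)|$ and $|\beta(w)|$ must be bounded on $C$; this follows from Proposition \ref{holder implies uniform} together with the fact that $C$ is compact (closed and bounded), so $\alpha$ and $\beta$ attain finite suprema $M_\alpha, M_\beta$ on $C$. Plugging in the Hölder bounds then yields $M_\alpha B |z-w|^\mu + M_\beta A |z-w|^\lambda$, and once again the $\mu$-term is reduced to $\lambda$ via the diameter trick.

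For the quotient, my strategy is to write $\alpha/\beta = \alpha \cdot (1/\beta)$ and reduce to the product case after checking that $1/\beta$ is $\mu$-Hölder on $C$. The computation $|1/\beta(z) - 1/\beta(w)| = |\beta(w)-\beta(z)|/(|\beta(z)||\beta(w)|)$ will need a uniform positive lower bound on $|\beta|$, which exists because $\beta$ is continuous and nonvanishing on the compact set $C$, so $m := \min_{C}|\beta| > 0$. This gives $|1/\beta(z) - 1/\beta(w)| \leq (B/m^2)|z-w|^\mu$, and applying the product result completes the proof. The two compactness-based ingredients (bounded diameter of $C$ and $\beta$ bounded away from zero) are really the only non-mechanical parts of the argument.
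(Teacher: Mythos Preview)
Your proposal is correct and follows essentially the same approach as the paper: the same diameter/length trick to reduce to the smaller index, the same add-and-subtract identity for the product, and the same compactness arguments for boundedness of $\alpha,\beta$ and for the positive lower bound on $|\beta|$. The only cosmetic difference is that for the quotient you factor $\alpha/\beta = \alpha\cdot(1/\beta)$ and invoke the product case, whereas the paper manipulates the difference $\alpha(z)/\beta(z)-\alpha(w)/\beta(w)$ directly; both routes use identical ingredients.
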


\begin{proof}
We shall prove each result in succession to make use of previously established bounds without repeated justification.

Let A and B denote the Hölder constants of $\alpha$ and $\beta$, respectively. As we are assuming the length of $C$ to be a finite quantity, by definition the distance between any two points lying on $C$ is no greater than the length of the contour $C$, denoted by $\ell (C)$. Without loss of generality, assume $\lambda \leq \mu$. Set $\varepsilon \coloneqq \mu - \lambda$, $M \coloneqq \text{max}\{A,B\}$, and let $z$, $w \in C$. 

\smallskip

First, we get an estimate for their sum:
\smallskip
\begin{align*}
\labs \left( \alpha(z) + \beta(z) \right) - \left( \alpha(w) + \beta(w) \right) \rabs & \leq \labs \alpha(z) - \alpha(w)\rabs + \labs \beta(z) - \beta(w) \rabs\\
& \leq A\labs z-w \rabs ^{\lambda} + B\labs z-w \rabs ^{\mu} \\
& \leq M \labs z-w \rabs ^{\lambda} \left( 1+\labs z-w \rabs^{\varepsilon} \right) \\
& \leq M \left( 1+\ell (C)^{\varepsilon} \right) \labs z-w \rabs ^{\lambda}
\end{align*}

\noindent
which is what we wanted to show. Now, $C$ is compact, so we can find $R>0$ with the property that 
\smallskip
\[
\labs \alpha (z) \rabs \leq R \quad\text{and}\quad \labs \beta (z) \rabs \leq R 
\]

\noindent
for every $z \in C$. We can now obtain an estimate for the product:
\smallskip
\begin{align*}
\labs \alpha(z)\beta(z) - \alpha(w)\beta(w) \rabs & \leq \labs \beta(z) \rabs \labs \alpha(z) - \alpha(w) \rabs + \labs \alpha(w) \rabs \labs \beta(z)-\beta(w) \rabs  \\
& \leq R \left( \labs \alpha(z) - \alpha(w)\rabs + \labs \beta(z) - \beta(w) \rabs \right)\\
& \leq RM(1+\ell (C)^{\varepsilon})\labs z-w \rabs^{\lambda} .
\end{align*}

Finally, we examine the case for quotients. Assume that $\beta(z) \neq 0$ for every point $z$ on the contour $C$. By continuity, we can bound $\labs\beta(z)\rabs$ away from 0 by a positive constant on a neighbourhood of every point on the contour. An application of the Heine-Borel theorem allows us find a uniform lower bound on $\beta$. That is, there is $K > 0$ with the property that $K \leq \labs \beta \rabs$ on $C$. Then we obtain
\smallskip
\begin{align*}
\labs \frac{\alpha(z)}{\beta(z)} - \frac{\alpha(w)}{\beta(w)}\rabs & \leq \frac{\labs \beta(w) \rabs \labs \alpha(z)-\alpha(w) \rabs + \labs \alpha(w) \rabs \labs \beta(z)-\beta(w) \rabs}{\labs \beta(z)\beta(w)\rabs} \\
& \leq \frac{RM(1+\ell(C)^{\varepsilon})}{K^2} \labs z-w \rabs^{\lambda}
\end{align*}

\noindent
as desired.
\end{proof}

\begin{proposition}
\label{smaller holder}
Suppose that $\varphi$ is $\lambda$-Hölder on a smooth contour $C$. Then for any $0<\alpha<\lambda$, $\varphi$ is $\alpha$-Hölder on $C$.
\end{proposition}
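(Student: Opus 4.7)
The plan is to exploit the factorization $|z-w|^{\lambda}=|z-w|^{\alpha}\cdot|z-w|^{\lambda-\alpha}$ together with the fact that $C$ is bounded (it is rectifiable, so in particular its diameter is finite and bounded by $\ell(C)$). Once I control the extra factor $|z-w|^{\lambda-\alpha}$ by a constant, the $\lambda$-Hölder estimate immediately upgrades into an $\alpha$-Hölder estimate with a new constant.

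More concretely, let $A$ be the Hölder constant associated with the $\lambda$-index of $\varphi$ on $C$. For arbitrary $z,w\in C$, I would first write
\[
|\varphi(z)-\varphi(w)|\leq A|z-w|^{\lambda}=A|z-w|^{\alpha}\cdot|z-w|^{\lambda-\alpha}.
\]
Because $C$ is rectifiable, every chord $|z-w|$ is bounded above by $\ell(C)$, and since $\lambda-\alpha>0$ the monotonicity of $x\mapsto x^{\lambda-\alpha}$ gives $|z-w|^{\lambda-\alpha}\leq \ell(C)^{\lambda-\alpha}$. Substituting this bound produces
\[
|\varphi(z)-\varphi(w)|\leq A\,\ell(C)^{\lambda-\alpha}|z-w|^{\alpha},
\]
which is exactly the Hölder condition with index $\alpha$ and constant $A\,\ell(C)^{\lambda-\alpha}$.

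There is essentially no obstacle here; the only subtlety worth flagging is that the argument genuinely uses boundedness of $C$. On an unbounded set the implication would fail, since for $|z-w|$ large the factor $|z-w|^{\lambda-\alpha}$ blows up and no finite Hölder constant suffices. For our contours this is not an issue, as rectifiability guarantees a finite diameter, and the diameter bound $\ell(C)$ is exactly the constant packaged into the new Hölder constant.
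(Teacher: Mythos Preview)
Your proof is correct and mirrors the paper's own argument essentially line for line: the same factorization $|z-w|^{\lambda}=|z-w|^{\lambda-\alpha}|z-w|^{\alpha}$, the same bound $|z-w|\le \ell(C)$ from rectifiability, and the same resulting constant $A\,\ell(C)^{\lambda-\alpha}$. Your closing observation about boundedness being essential is also exactly the content of the paper's remark following the proposition.
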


\begin{proof}
As $C$ is rectifiable, it has finite length $\ell (C)$. Let $A > 0$ denote the Hölder constant of $\varphi$ on $C$. For any pair of points $z$, $w\in C$, we have
\smallskip
\begin{align*}
\labs \varphi (z) - \varphi (w) \rabs  & \leq A \labs z - w \rabs ^{\lambda} \\
& = A \labs z - w\rabs^{\lambda - \alpha} \labs z - w \rabs^{\alpha} \\
& \leq A \ell (C)^{\lambda - \alpha} \labs z - w \rabs^{\alpha} .
\end{align*}
\end{proof}

\begin{remark}
The previous two propositions above can be shown to remain true in general for arbitrary subsets of the complex plane, so longs as the sets have finite diameter.
\end{remark}

\begin{example}
As was shown by Proposition $\ref{smaller holder}$, the classes of Hölder continuous functions form a sort of containment with each other in the sense that functions which satisfy the condition for larger indices also satisfy it for smaller indices. It turns out that this containment is strict. For example, take the function $f(x) = \sqrt{x}$ defined on the interval $[0,1]$. It is straightforward to show that $f$ is Hölder continuous with index $\frac{1}{2}$. Let $a$, $b \in [0,1]$. Then
\smallskip
\[
\labs \sqrt{a}-\sqrt{b}\rabs^2 \leq  \labs \sqrt{a}+\sqrt{b} \rabs \labs \sqrt{a}-\sqrt{b} \rabs  = \labs a -b\rabs .
\]

\noindent
Taking the square root of both sides lends the desired result. This happens to be the \emph{maximum} Hölder index we can achieve on this interval. For suppose that $0<\varepsilon\leq\frac{1}{2}$ and $A > 0$. Then 
\smallskip
\begin{align*}
1 &> \frac{A}{A+1} \\
\Rightarrow \labs \frac{1}{\left( A+1 \right)^{\frac{1}{\varepsilon}}} \rabs^{\frac{1}{2}} &> A \labs \frac{1}{\left( A+1 \right)^{\frac{1}{\varepsilon}}}\rabs ^{\frac{1}{2} + \varepsilon} \\
\Rightarrow \labs \frac{1}{\left( A+1 \right)^{\frac{1}{2\varepsilon}}} \rabs &> A \left( \frac{3}{4} \right)^{\frac{1}{2} + \varepsilon} \labs \frac{1}{\left( A+1 \right)^{\frac{1}{\varepsilon}}}\rabs ^{\frac{1}{2} + \varepsilon} \\
\Rightarrow \labs \frac{1}{\left( A+1 \right)^{\frac{1}{2\varepsilon}}} -\frac{1}{2\left( A+1 \right)^{\frac{1}{2\varepsilon}}} \rabs &> A \labs \frac{1}{\left( A+1 \right)^{\frac{1}{\varepsilon}}} - \frac{1}{4\left( A+1 \right)^{\frac{1}{\varepsilon}}}\rabs ^{\frac{1}{2} + \varepsilon} .
\end{align*}

\noindent
If we let $a=\left( A+1 \right)^{-1/\varepsilon}$ and $b= \left[ 4\left( A+1 \right)^{1/\varepsilon} \right]^{-1}$, then $a$, $b \in [0,1]$, and by the final inequality above:
\smallskip
\[
\labs \sqrt{a} - \sqrt{b} \rabs > A \labs a - b\rabs^{\frac{1}{2} + \varepsilon}
\]

\noindent
so the Hölder condition is not satisfied, thus proving our claim. We avoided choosing one of the points to be zero to emphasize that the problem point is \emph{not} zero itself. The limiting factor on the Hölder index is in the \emph{rate of change} of the function \emph{near} 0.
\end{example}

\begin{proposition}
\label{holder_composition}
Let $f$ be $\alpha$-Hölder on $D$, and suppose that $g$ is $\lambda$-Hölder on the image of $f$. Then the composition $g \circ f$ is $\lambda\alpha$-Hölder on D.
\end{proposition}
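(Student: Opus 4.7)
The plan is to chain together the two Hölder inequalities directly: apply the Hölder bound for $g$ first to reduce the problem to estimating $|f(z)-f(w)|^\lambda$, then apply the Hölder bound for $f$ raised to the $\lambda$ power.

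More concretely, let $A$ denote the Hölder constant of $f$ on $D$ and $B$ the Hölder constant of $g$ on $f(D)$. Given $z,w \in D$, I would first observe that $f(z),f(w)$ both lie in $f(D)$, so the Hölder estimate for $g$ gives
\[
|g(f(z)) - g(f(w))| \leq B\,|f(z) - f(w)|^{\lambda}.
\]
Then I would apply the Hölder estimate for $f$ inside the right-hand side, raising both sides of $|f(z)-f(w)| \leq A|z-w|^{\alpha}$ to the power $\lambda$ (valid because both sides are non-negative). This produces
\[
|g(f(z)) - g(f(w))| \leq B A^{\lambda}\,|z-w|^{\lambda\alpha},
\]
so $g\circ f$ is $\lambda\alpha$-Hölder on $D$ with Hölder constant $BA^{\lambda}$.

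The only thing that needs a sentence of verification is that the new index $\lambda\alpha$ is admissible as a Hölder index, i.e. lies in $(0,1]$. Since $0<\alpha\leq 1$ and $0<\lambda\leq 1$, the product $\lambda\alpha$ is strictly positive and bounded above by $1$, so the result legitimately falls under Definition \ref{Holder condition}. There is no real obstacle here; the argument is purely a two-line composition of estimates, and unlike Proposition \ref{smaller holder} it does not even require the finite-length (rectifiability) assumption on the underlying set, so I would state it for a general $D$ as the hypothesis already allows.
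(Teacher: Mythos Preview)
Your proof is correct and follows exactly the same two-line chaining of estimates as the paper's own proof (with only the cosmetic difference that the paper assigns the letters $A$ and $B$ to the constants of $g$ and $f$ respectively, the reverse of your convention). Your extra remark that $\lambda\alpha\in(0,1]$ is a small addition the paper omits but is entirely appropriate.
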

\smallskip
\begin{proof}
Let $A$, $B > 0$ denote the Hölder constants associated with $g$ and $f$, respectively. Let $z_1, z_2$ be points in the set D. We have
\begin{align*}
\labs (g \circ f) (z_1)- (g \circ f) (z_2)\rabs & \leq A \labs f(z_1) - f(z_2)\rabs^{\lambda} \\
& \leq AB^{\lambda} \labs z_1 - z_2 \rabs^{\lambda\alpha} .
\end{align*}
\end{proof}

Often, both the Lipschitz and general Hölder condition are thought of as \emph{local} properties. Given some well-behaved function, different subsets of its domain may require different indices or constants, let alone continue to satisfy the condition. This notion is made precise with the following definition.

\begin{definition}[Local Lipschitz Condition]
We say that a function $f: D \to \mathbb{C}$ is locally Lipschitz at a point $z \in D$ if there is a neighbourhood $B$ of $z$ such that $f$ satisfies the Lipschitz condition in $D\cap B$. That is, there is $r > 0$ and a constant $M > 0$ (possibly depending on $z$) such that 
\smallskip
\[
\labs f(z_1) - f(z_2) \rabs \leq M \labs z_1 - z_2\rabs
\]

\noindent
for all $z_1$, $z_2 \in B(z,r) \cap D$. If $f$ is locally Lipschitz at every point in $D$, then we say $f$ is locally Lipschitz on $D$.
\end{definition}

\smallskip

\begin{remark}
The above definition generalizes to a local Hölder condition in the obvious way. We will not explicitly need it however.
\end{remark}

In certain contexts, the distinction between locally Lipschitz and simply Lipschitz is be made by referring to the latter as \emph{globally} Lipschitz, calling attention to the fact that we can find a Lipschitz constant that works for \emph{any} pair of points in the set.

\smallskip

The rest of this section will be dedicated to identifying familiar function and domain conditions that imply Lipschitz continuity.

\begin{proposition}
\label{analytic_implies_local_lipschitz}
If $f$ is analytic on a subset $D$ of the complex plane, then $f$ is locally Lipschitz on D.
\end{proposition}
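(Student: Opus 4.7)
The plan is to exploit the fact that analyticity on $D$ forces $D$ to be open (analyticity is defined at interior points), so around any point $z_0 \in D$ we have an open disk contained in $D$. On a slightly smaller closed disk, the derivative $f'$ is continuous (a standard consequence of analyticity) and therefore bounded. Since a disk is convex, the straight-line segment between any two of its points stays inside, and the fundamental theorem of calculus along that segment will convert the bound on $|f'|$ into a Lipschitz bound on $f$.

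More concretely, I would fix $z_0 \in D$ and choose $r > 0$ so that the closed disk $\overline{B(z_0, r)}$ lies in $D$. By continuity of $f'$ on the compact set $\overline{B(z_0, r)}$, there is a constant $M > 0$ with $|f'(\zeta)| \leq M$ for every $\zeta \in \overline{B(z_0, r)}$. Now take any two points $z_1, z_2 \in B(z_0, r)$ and parameterize the line segment from $z_1$ to $z_2$ by $\gamma(t) = z_1 + t(z_2 - z_1)$ for $t \in [0,1]$; convexity of the ball keeps this segment inside $\overline{B(z_0, r)}$. Then
\[
f(z_2) - f(z_1) = \int_0^1 \frac{d}{dt}f(\gamma(t))\, dt = (z_2 - z_1) \int_0^1 f'(\gamma(t))\, dt,
\]
and the estimation lemma immediately gives $|f(z_2) - f(z_1)| \leq M |z_2 - z_1|$. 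This is precisely the local Lipschitz condition at $z_0$ with neighbourhood $B(z_0, r)$ and Lipschitz constant $M$.

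There is no real obstacle here; the only point to be careful about is the justification that $f'$ is continuous (so that we may bound it on the compact disk) and the application of the fundamental theorem of calculus for a complex-valued function of a real parameter along the line segment. Both facts are standard for analytic functions, and the convexity of disks is what keeps the argument clean without needing a homotopy or connectedness assumption on $D$.
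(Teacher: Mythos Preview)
Your proof is correct and follows essentially the same approach as the paper: bound $|f'|$ on a closed disk about $z_0$ and then integrate along the line segment joining two points, invoking convexity of the disk and the estimation lemma. The only cosmetic differences are that the paper appeals to the maximum modulus principle (rather than continuity on a compact set) to bound $|f'|$, and that your version establishes the Lipschitz inequality for \emph{all} pairs $z_1,z_2$ in the ball, which is exactly what the paper's definition of ``locally Lipschitz'' requires.
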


\begin{proof}
There must be an open set $\Omega$ containing $D$ in which $f$ is analytic, so fix $z_0 \in D$ and take $r > 0$ such that $\overline{B(z_0,r)} \subset \Omega$. Using analyticity of $f$ and a consequence of the maximum modulus principle, we can find $w_0 > 0$ on the boundary of $B(z_0,r)$ with the property that $\labs f'(w) \rabs \leq \labs f'(w_0) \rabs$ for every $w \in B(z_0,r)$. Choose a point $z$ that lies in both $D$ and the open ball $B(z_0,r)$. Since open balls are convex, we can connect $z_0$ and $z$ with a straight line $L$ that is entirely contained within $B(z_0,r)$. Combining the fundamental theorem of contour integrals and the estimation lemma, we find
\smallskip
\[
\labs f(z) - f(z_0) \rabs = \labs \int_{L} f'(z) \,dz \rabs \leq \labs f'(w_0) \rabs \labs z-z_0 \rabs .
\]
\end{proof}

We should expect that when formulating a local version of a property, the global condition should imply the local condition immediately. What is often of interest is what is needed to make the converse direction true. That is, if we are given that the local condition is satisfied everywhere, what extra conditions are needed to guarantee we have the global condition? This theorem answers that question for the Lipschitz condition.

\begin{theorem}
\label{local_lipschitz_implies_global}
If a function $f$ is locally Lipschitz on a compact subset $K$ of the complex plane, then it is globally Lipschitz on $K$.
\end{theorem}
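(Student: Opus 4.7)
The plan is to exploit compactness twice: once to extract a finite subcover of local-Lipschitz balls, and once to bound $f$ itself (giving a trivial Lipschitz-type estimate for far-apart pairs). The key difficulty is that two points $z_1, z_2 \in K$ might fall in different balls of the cover, so no single local Lipschitz constant applies directly. I would resolve this with the Lebesgue number lemma, which guarantees that sufficiently close pairs do share a common ball from the cover.

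First I would invoke the hypothesis at each $z \in K$ to produce $r_z > 0$ and $M_z > 0$ so that $f$ is Lipschitz with constant $M_z$ on $B(z, r_z) \cap K$. Next, I would consider the open cover $\{B(z, r_z/2)\}_{z \in K}$ of $K$ and, by compactness, extract a finite subcover indexed by points $z_1, \dots, z_n$. Set $M := \max_i M_{z_i}$. By the Lebesgue number lemma applied to this finite cover, there exists $\delta > 0$ with the property that any pair of points in $K$ separated by less than $\delta$ lies inside a single $B(z_i, r_{z_i})$, to which the corresponding local Lipschitz bound with constant $M_{z_i} \leq M$ applies.

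The second half of the argument handles points that are not close. Since local Lipschitz continuity implies ordinary continuity, $f$ is continuous on the compact set $K$, hence bounded; pick $R > 0$ with $|f(z)| \leq R$ on $K$. For any $z_1, z_2 \in K$ with $|z_1 - z_2| \geq \delta$, we get the crude estimate
\[
|f(z_1) - f(z_2)| \leq 2R \leq \frac{2R}{\delta}\,|z_1 - z_2|.
\]
Combining this with the close-points estimate, I would conclude that $f$ is globally Lipschitz on $K$ with constant $L := \max\{M, 2R/\delta\}$.

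The main obstacle, as mentioned, is bridging between different balls of the subcover. A tempting alternative would be a chain argument: subdivide the segment joining $z_1$ and $z_2$ into finitely many steps, each short enough to lie in some cover ball. However, this requires the segment to stay inside $K$, which holds only if $K$ is convex (or at least connected with paths inside $K$), and $K$ is only assumed compact. The Lebesgue number/boundedness split is thus the cleanest route and avoids any connectedness assumption on $K$.
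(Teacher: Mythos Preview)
Your proof is correct and takes a genuinely different route from the paper's. The paper argues by contradiction: assuming the difference quotient $|f(z)-f(w)|/|z-w|$ is unbounded on $K\times K$ off the diagonal, it selects pairs $(z_n,w_n)$ with quotient at least $n$; boundedness of $f$ on the compact $K$ then forces $|z_n-w_n|\to 0$, and Bolzano--Weierstrass yields subsequences $z_{n_k},w_{n_k}\to z_0\in K$, contradicting the local Lipschitz hypothesis at $z_0$. Your direct argument via the Lebesgue number lemma has the virtue of producing an explicit global constant $\max\{M,\,2R/\delta\}$ and of cleanly separating the ``near'' and ``far'' regimes up front. The paper's sequential-compactness approach is slightly more elementary in that it invokes only Bolzano--Weierstrass rather than the Lebesgue number lemma, but it is nonconstructive. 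Both proofs ultimately rest on the same two compactness consequences---boundedness of $f$, and the fact that sufficiently close pairs fall under a single local Lipschitz neighbourhood---the paper simply reaches the second fact through a limit rather than through a covering. Your closing remark about why a chain-along-the-segment argument would fail without convexity of $K$ is well taken. (A minor aside: once you invoke the Lebesgue number lemma, the halving of radii in your cover is not actually needed.)
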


\begin{proof}
For the sake of contradiction, suppose that $f$ is not globally Lipschitz. Then the quotient 
\smallskip
\[
\labs \frac{f(z) - f(w)}{z - w} \rabs \tag{1}
\]

\noindent
is unbounded on the set $K \times K - \text{diag}(K)$, where $\text{diag}(K) = \{(\tau,\tau): \tau \in K \}$. This means that for each $n \in \mathbb{N}$, we can find $(z_n, w_n) \in K \times K - \text{diag}(K)$ such that
\smallskip
\[
\labs \frac{f(z_n) - f(w_n)}{z_n - w_n} \rabs \geq n . \tag{2}
\]

Since $f$ is locally Lipschitz in $K$, $f$ is in particular continuous on $K$, and hence bounded on $K$. It follows that the numerator in (1) is bounded on $K$. Thus for the inequality in (2) to be true on $K \times K - \text{diag}(K)$, it must be the case that
\smallskip
\[
\labs z_n - w_n \rabs \to 0 \quad\text{as}\quad n \to \infty.
\]

By the Bolzano-Weierstrass theorem, we can find a subsequence $(z_{n_k})_{k=1}^{\infty}$ of $(z_n)_{n=1}^{\infty}$ that converges to a complex number $z_0$. Moreover, as $K$ is closed, this limit point must be in the set $K$. We claim that the subsequence $(w_{n_k})_{k=1}^{\infty}$ also converges to $z_0$. Consider the inequality
\smallskip
\[
\labs w_{n_k} - z_0 \rabs \leq \labs w_{n_k} - z_{n_k} \rabs + \labs z_{n_k} - z_0 \rabs.
\]

For $N$ sufficiently large and $n_k \geq N$, both terms on the right hand side can be made arbitrarily small. 

What we have established contradicts the assumption that $f$ is locally Lipschitz at $z_0$. To illustrate why, recall that by assumption there is an $r > 0$ such that $f$ is Lipschitz in $B(z_0,r)\cap K$ with constant $M_{z_0} > 0$. We just showed that there is $N \in \mathbb{N}$ such that $z_{n_k}$, $w_{n_k} \in B(z_0, r) \cap K$ for $k \geq N$. By the Archimedean property and the convergence of the subsequences, we can find an index $n_{k_0} > M_{z_0}$ so large as to satisfy $k_0 \geq N$. By (2), this gives us the inequality 
\smallskip
\[
\labs \frac{f(z_{n_{k_0}}) - f(w_{n_{k_0}})}{z_{n_{k_0}} - w_{n_{k_0}}} \rabs \geq{n_{k_0}}
\]

\noindent
and in particular
\smallskip
\[
\labs f(z_{n_{k_0}}) - f(w_{n_{k_0}}) \rabs > M_{z_0}\labs z_{n_{k_0}} - w_{n_{k_0}} \rabs .
\]

Therefore, $f$ must be globally Lipschitz.
\end{proof}

\smallskip

\begin{remark}
Compactness is essential to this theorem. To illustrate why, consider the two analytic functions $z \mapsto 1/z$ and $z \mapsto z^2$. Both are first examples of functions that are not uniformly continuous on domains which contain a deleted neighbourhood of 0 and $\infty$, respectively. By Proposition $\ref{holder implies uniform}$, neither function is Hölder continuous on these domains, but they are locally Lipschitz at each point by Theorem $\ref{analytic_implies_local_lipschitz}$. Indeed, neither domain is compact. 
\end{remark}

\smallskip

\begin{corollary}
\label{Analytic fn are lipschitz on smooth contours}
Analytic functions satisfy the Lipschitz condition on smooth contours.
\end{corollary}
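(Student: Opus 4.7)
The plan is to combine the two results established just before the corollary, namely Proposition \ref{analytic_implies_local_lipschitz} (analytic functions are locally Lipschitz) and Theorem \ref{local_lipschitz_implies_global} (locally Lipschitz on a compact set implies globally Lipschitz). The one piece that needs to be verified separately is that a smooth contour is in fact a compact subset of the complex plane, after which the conclusion is immediate.

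First I would unpack the hypothesis: saying that $f$ is analytic on $C$ means there is an open set $\Omega \supseteq C$ on which $f$ is holomorphic, which is precisely the setting of Proposition \ref{analytic_implies_local_lipschitz}. Applying that proposition gives local Lipschitz continuity of $f$ at every point of $C$.

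Next I would verify compactness of $C$. A smooth contour, by the definition given in the notation section, admits a continuously differentiable parameterization $\alpha:[a,b]\to\mathbb{C}$, and $C$ is precisely the image $\alpha([a,b])$. Since $[a,b]$ is compact and $\alpha$ is continuous, $C$ is compact as the continuous image of a compact set. (For a piecewise smooth contour the same argument applies to each smooth piece and one takes a finite union of compact sets.)

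With local Lipschitz continuity on $C$ and compactness of $C$ in hand, Theorem \ref{local_lipschitz_implies_global} applies directly and yields that $f$ is globally Lipschitz on $C$, which is exactly the assertion of the corollary. There is no real obstacle here; the only mild subtlety is remembering that ``analytic on $C$'' is interpreted via an open neighborhood, so that Proposition \ref{analytic_implies_local_lipschitz} is literally applicable. Everything else is a straightforward invocation of the two preceding results.
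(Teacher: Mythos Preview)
Your proposal is correct and matches the paper's intended argument exactly: the corollary is stated without proof immediately after Proposition~\ref{analytic_implies_local_lipschitz} and Theorem~\ref{local_lipschitz_implies_global}, and the implicit reasoning is precisely the combination you describe, with compactness of a smooth contour supplying the missing hypothesis. Your added justification that $C$ is compact as the continuous image of $[a,b]$ is the only detail the paper leaves unstated, and you have handled it correctly.
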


\smallskip

\begin{corollary}
Let $f$ be analytic on a compact set $K \subset \mathbb{C}$. If $g$ is $\lambda$-Hölder on the image of $f$, then the composition $g \circ f$ is $\lambda$-Hölder on $K$.
\end{corollary}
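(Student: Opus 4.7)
The plan is to chain together three results already established in the preceding pages, with essentially no new computation required. The key observation is that analyticity on a compact set upgrades to a \emph{global} Lipschitz estimate, and Lipschitz is exactly the $\lambda = 1$ case of the Hölder condition, so the composition proposition delivers the desired index unchanged.

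First, I would invoke Proposition \ref{analytic_implies_local_lipschitz} to conclude that $f$ is locally Lipschitz on $K$. Since $K$ is compact by hypothesis, Theorem \ref{local_lipschitz_implies_global} then promotes this to global Lipschitz continuity: $f$ satisfies the Hölder condition on $K$ with index $\alpha = 1$ and some constant $B > 0$.

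Next, I would appeal to Proposition \ref{holder_composition} with $\alpha = 1$ for the inner map $f$ and $\lambda$ for the outer map $g$ (which is $\lambda$-Hölder on the image $f(K)$ by hypothesis). The proposition yields that $g \circ f$ is $\lambda \cdot 1 = \lambda$-Hölder on $K$, which is exactly the statement to be proved.

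The proof is essentially a one-line citation chain, so there is no real obstacle; the only thing to verify is that the hypotheses of each invoked result are met. The compactness of $K$ is used precisely once, namely to pass from local to global Lipschitz continuity in the second step. Without it, one would only obtain local Hölder continuity of the composition, as hinted at in the remark following Theorem \ref{local_lipschitz_implies_global}.
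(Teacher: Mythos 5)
Your proposal is correct and follows exactly the same route as the paper, which likewise cites Proposition \ref{analytic_implies_local_lipschitz}, Theorem \ref{local_lipschitz_implies_global}, and Proposition \ref{holder_composition} in that order. Your version simply spells out the citation chain more explicitly, including the observation that compactness is used only to pass from local to global Lipschitz continuity.
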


\begin{proof}
The result follows at once from Theorem \ref{analytic_implies_local_lipschitz}, and Proposition \ref{holder_composition} and \ref{local_lipschitz_implies_global}.
\end{proof}

As parameterizations of contours are defined on compact intervals of the real line, but are complex-valued functions, it will be useful to have some results targeted to this particular class of functions.

\begin{lemma}
\label{realLip}
Let $f:[a,b] \to \mathbb{R}$ be a differentiable function with bounded derivative on $(a,b)$. Then $f$ is Lipschitz continuous.
\end{lemma}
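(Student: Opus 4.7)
The plan is to invoke the Mean Value Theorem, which is tailor-made for this kind of statement: it converts pointwise information about the derivative into a finite-difference bound on the function itself. Since $f$ is assumed differentiable on all of $[a,b]$, it is automatically continuous on $[a,b]$ and differentiable on $(a,b)$, which are precisely the hypotheses of the MVT.

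First I would let $M > 0$ be an upper bound for $|f'|$ on $(a,b)$; this exists by the boundedness hypothesis. Next, I would fix arbitrary points $x, y \in [a,b]$ with, say, $x < y$, and apply the Mean Value Theorem to $f$ restricted to $[x,y]$, yielding a point $c \in (x,y)$ with
\[
f(y) - f(x) = f'(c)(y - x).
\]
Taking absolute values and using $|f'(c)| \leq M$ gives $|f(y) - f(x)| \leq M|y - x|$. The case $x = y$ is trivial, and the case $x > y$ follows by swapping roles, so the inequality holds for all pairs in $[a,b]$, which is exactly the Lipschitz condition with constant $M$.

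There is no real obstacle here; the proof is essentially a one-line application of the MVT. The only mild point worth noting is that differentiability is assumed on the closed interval $[a,b]$ while the bound on $f'$ is assumed only on the open interval $(a,b)$, but this asymmetry is harmless because the MVT only ever produces interior mean-value points $c \in (x,y) \subseteq (a,b)$, so the derivative is always evaluated where it is controlled.
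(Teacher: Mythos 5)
Your proof is correct and uses exactly the same approach as the paper: bound $|f'|$ by $M$ on $(a,b)$, apply the Mean Value Theorem to obtain a mean-value point $c$ in the open interval, and conclude the Lipschitz bound with constant $M$. The additional remark about the asymmetry between differentiability on $[a,b]$ and boundedness on $(a,b)$ is a nice observation, though not strictly needed.
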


\begin{proof}
By assumption, we can uniformly bound $f'$ by some constant $M > 0$ on $(a,b)$. For any pair of points $x_1$, $x_2 \in [a,b]$ with $x_2 > x_1$, we apply the Mean Value Theorem to obtain
\smallskip
\[
\frac{f(x_2) - f(x_1)}{x_2 - x_1} = f'(c)
\]

\noindent
for some $c \in (x_1,x_2)$. Therefore
\smallskip
\[
\labs f(x_2) - f(x_1) \rabs = \labs f'(c) \rabs \labs x_2 - x_1 \rabs \leq M \labs x_2 - x_1 \rabs
\]

\noindent
and the claim follows.
\end{proof}

\begin{example}
The converse to Lemma \ref{realLip} is not true in general. That is, satisfying the Lipschitz condition on a subset of $\mathbb{R}$ is not strong enough to imply differentiability everywhere on that set. A simple example is the real absolute value function, which is obviously Lipschitz continuous everywhere, but not differentiable at the origin. However, real Lipschitz continuous functions can be shown to be absolutely continuous, which is enough to guarantee differentiability \emph{almost} everywhere.
\end{example}

\begin{theorem}
\label{paramLip}
Let $\alpha: [a,b] \to \mathbb{C}$ be a complex-valued function of one real variable. Suppose that $\alpha$ is differentiable and has bounded derivative on $(a,b)$. Then $\alpha$ is Lipschitz continuous.
\end{theorem}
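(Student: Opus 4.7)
The plan is to reduce the complex-valued case to the real-valued case already handled by Lemma \ref{realLip}. The key observation is that a complex-valued function of a real variable decomposes naturally into its real and imaginary parts, and differentiability plus boundedness of the derivative transfer cleanly to each component.

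First I would write $\alpha(t) = u(t) + i v(t)$, where $u, v : [a,b] \to \mathbb{R}$ are the real and imaginary parts of $\alpha$. The definition of the derivative for complex-valued functions of a real variable yields $\alpha'(t) = u'(t) + i v'(t)$ at every point of $(a,b)$ where $\alpha'$ exists, so $u$ and $v$ are each differentiable on $(a,b)$. Moreover, since $|u'(t)| \leq |\alpha'(t)|$ and $|v'(t)| \leq |\alpha'(t)|$, the hypothesis that $\alpha'$ is bounded on $(a,b)$ forces $u'$ and $v'$ to be bounded on $(a,b)$ as well (by the same constant $M > 0$ that bounds $|\alpha'|$).

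Next, I would invoke Lemma \ref{realLip} applied individually to $u$ and to $v$. This yields Lipschitz constants $M_u$ and $M_v$ such that $|u(x_2) - u(x_1)| \leq M_u |x_2 - x_1|$ and $|v(x_2) - v(x_1)| \leq M_v |x_2 - x_1|$ for all $x_1, x_2 \in [a,b]$. Finally, the triangle inequality on the complex modulus,
\[
|\alpha(x_2) - \alpha(x_1)| \leq |u(x_2) - u(x_1)| + |v(x_2) - v(x_1)| \leq (M_u + M_v)|x_2 - x_1|,
\]
delivers the desired global Lipschitz bound with constant $M_u + M_v$.

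There is essentially no obstacle here; the only substantive point is the routine verification that $\alpha'(t) = u'(t) + iv'(t)$ with the corresponding bounds on each component, which is immediate from separating real and imaginary parts in the difference quotient. Once that is noted, the result is a clean application of Lemma \ref{realLip} component-wise followed by the triangle inequality.
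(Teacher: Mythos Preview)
Your proof is correct and follows essentially the same approach as the paper: decompose $\alpha$ into real and imaginary parts, apply Lemma~\ref{realLip} to each, and recombine. The only cosmetic difference is that the paper recombines via the Pythagorean identity $|\alpha(t_1)-\alpha(t_2)|^2 = (x(t_1)-x(t_2))^2 + (y(t_1)-y(t_2))^2$ to get constant $\sqrt{2}\,M$, whereas you use the triangle inequality to get $M_u + M_v$; both are fine.
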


\begin{proof}
Let $x(t)$ and $y(t)$ denote the real and imaginary parts of the function $\alpha$, so that $\alpha(t) = x(t) + iy(t)$. Then the derivative of the function is given by $\alpha'(t) = x'(t) + iy'(t)$. By assumption, there is a uniformly bounding constant $M > 0$ for the function $\alpha'$. Note that the bound
\[
\labs x' \rabs\text{, }\labs y' \rabs \leq \labs \alpha' \rabs \leq M \tag{1}
\]

\noindent
holds on $(a,b)$. Since both of $x(t)$ and $y(t)$ are real differentiable functions, by Lemma $\ref{realLip}$ they are Lipschitz continuous, and $M$ can be used as the Lipschitz constant by (1). Letting $t_1$, $ t_2 \in [a,b]$, we find
\smallskip
\begin{align*}
\labs \alpha(t_1) - \alpha(t_2) \rabs^2 & = \left( x(t_1) - x(t_2) \right)^2 +\left( y(t_1) - y(t_2) \right)^2 \\
& = \labs x(t_1) - x(t_2) \rabs^2 +\labs y(t_1) - y(t_2) \rabs^2\\
& \leq 2M^2 \labs t_1 - t_2\rabs^2 .
\end{align*}

\noindent
thus the conclusion follows.
\end{proof}

\section{Principal Value of a Singular Integral}
\label{Principal Value Integral}

In this section, we look at a concept known as a principal value: a way of assigning a meaningful value to improper integrals where the integrand contains a singularity. This is with the view of determining when it is possible to extract a usable value for the Cauchy type integral along the singular line. We begin with a definition.

\begin{definition}[Principal Value Integral]
Let $C$ be a smooth contour in the $\tau$-plane, and let $\tau_0$ be a point on $C$. Let $C_r \coloneqq C - B(\tau_0,r)$ denote the contour $C$ with a neighbourhood of radius $r>0$ around the point $\tau_0$ removed. Let $\varphi$ be an integrable function on $C_r$. For the singular integral with Cauchy kernel and $\varphi$ density
\smallskip
\[
\int_C \frac{\varphi (\tau)}{\tau -\tau_0}\,d\tau
\]

\noindent
we define its principal value as
\smallskip
\[
P.V.\int_{C} \frac{\varphi (\tau)}{\tau -\tau_0}\,d\tau \coloneqq \lim_{r \to 0^{+}} \int_{C_r}\frac{\varphi (\tau)}{\tau -\tau_0}\,d\tau .
\]

\end{definition}

\begin{remark}
It can be shown that for any point $\tau_0$ on a smooth contour, there exists an open ball centered at $\tau_0$ that contains a single arc of the contour $C$. This a subtle, but important fact because we want to maintain as much of the curve as possible. When examining the limit, it would be awfully inconvenient to have to consider the possibility of portions of the curve being deleted that are close to the the singularity in space, but far enough away in terms of distance along the curve as to not affect what the limit is attempting to capture. The proof requires a topological argument, which is given in Appendix \ref{Appendix 1}.
\end{remark}

Typically, we only talk about the principal value of an integral when a singularity is present in the integrand. However, if we were to use the principal value definition on a function without a singularity, it would produce the same value as the regular contour integral definition. To see why, let $f$ be continuous on a smooth contour $C$, $\tau_0$ a point on $C$, and $r>0$ small enough so that $\mathcal{B}_r = B(\tau_{0},r)\cap C$ contains one connected component. By the estimation lemma:
\smallskip
\[
\labs \int_{\mathcal{B}_r} f(\tau) \,d\tau \rabs \leq \sup_{\tau\in C} \labs f(\tau) \rabs \cdot \ell(B_r) .
\]

The length of $\mathcal{B}_r$ approaches 0 as $r$ goes to 0, hence so does the above integral. It then follows that the contour integral and principal value integral of $f$ over $C$ will be the same.

\medskip

For an intuition-building example, consider the real-valued function $f(x) = 1/x$ on $[-1,0)\cup(0,1]$. Interpreting the improper integral of $f$ on this interval as the area under the curve, we could convince ourselves that\textemdash in some sense\textemdash the ``areas" bounded by $f$ above and below the $x$-axis cancel each other out, so the integral should be 0. However, as one learns in a course of integral calculus, this is a divergent integral under the standard notion of improper integrals. If instead we use a principal value integral, not only does the integral exists, but the result is indeed zero. It is left to the inquisitive reader to investigate how the values of this integral with respect to a principal value changes if the definition is modified to allow differing approaches on either side of the singularity.

\medskip
\begin{remark}
While symmetry is an essential property in the deletion process of the principal value integral to ensure that one direction of approach is not favoured over the other, it is not strictly necessary to delete a \emph{neighbourhood} of $\tau_0$ in the limiting process. As we will see in the following results, the important characteristic is that the \emph{ratio} between the distance from $\tau_0$ to each of the new end points created through deletion approaches 1. That is, the symmetry must exist in the limit, but not necessarily anywhere else. Using basic open balls, this ratio is always 1. In more advanced or delicate treatments, the redefinition is necessary.
\end{remark}

\medskip

The following lemma gives us a formula for the principle value of the simplest singular integral.

\smallskip
\begin{lemma}
\label{ipi_integral}
Let $C$ be a smooth contour and $\tau_0$ be a point on $C$ that does not coincide with its endpoints. Then the principal value singular integral with Cauchy kernel and unit density
\smallskip
\[
\int_{C} \frac{d\tau}{\tau-\tau_0}
\]

\noindent
exists. If $C$ is open with true endpoints $a$ and $b$, the formula for its principal value is given by
\smallskip
\[
P.V.\int_{C} \frac{d\tau}{\tau-\tau_0} = \log(b-\tau_0)-\log(a-\tau_0)+\pi i.
\]

If $C$ is closed, the value is simply $i\pi$.
\end{lemma}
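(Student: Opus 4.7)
The plan is to compute the principal value directly via the fundamental theorem of contour integrals, with the $i\pi$ term emerging from a careful geometric analysis near the singularity. By the remark preceding the lemma and the topological fact in Appendix \ref{Appendix 1}, for all sufficiently small $r > 0$ the intersection $B(\tau_0, r) \cap C$ is a single sub-arc of $C$. Orient $C$ as usual and let $\tau_1(r)$ denote the endpoint of this sub-arc encountered \emph{before} $\tau_0$ along $C$, and $\tau_2(r)$ the endpoint encountered \emph{after}; both lie on the circle $|\tau - \tau_0| = r$, so in particular $|\tau_1(r) - \tau_0| = |\tau_2(r) - \tau_0| = r$. The deleted contour $C_r$ then consists of two disjoint sub-arcs (from $a$ to $\tau_1(r)$, and from $\tau_2(r)$ to $b$) in the open case, and of the single sub-arc from $\tau_2(r)$ around to $\tau_1(r)$ in the closed case.

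On each of these sub-arcs the integrand $1/(\tau - \tau_0)$ is analytic, and because each sub-arc avoids $\tau_0$ we may choose an analytic branch of $\log(\tau - \tau_0)$ on it. Applying the fundamental theorem of contour integrals to each component gives, in the open case,
\[
\int_{C_r} \frac{d\tau}{\tau - \tau_0} = \bigl[\log(b - \tau_0) - \log(a - \tau_0)\bigr] + \bigl[\log(\tau_1(r) - \tau_0) - \log(\tau_2(r) - \tau_0)\bigr],
\]
where the branches on the two sub-arcs are chosen to match in the sense described below. In the closed case the first bracketed term is absent (equivalently, take $a=b$). Everything therefore reduces to evaluating
\[
\lim_{r \to 0^{+}} \bigl[\log(\tau_1(r) - \tau_0) - \log(\tau_2(r) - \tau_0)\bigr] = i\pi.
\]

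For that limit, the modulus parts of the two logarithms cancel exactly since $|\tau_j(r) - \tau_0| = r$, so only the argument difference matters. Invoking smoothness, let $\alpha$ be a $\mathcal{C}^1$ parameterization with $\alpha(s_0) = \tau_0$ and $\alpha'(s_0) \neq 0$, and write $\tau_2(r) = \alpha(s_0 + h_2(r))$ and $\tau_1(r) = \alpha(s_0 - h_1(r))$ with $h_1(r), h_2(r) \to 0^{+}$. The first-order expansion $\alpha(s_0 \pm h) - \tau_0 = \pm h\, \alpha'(s_0) + o(h)$ shows that the unit vectors $(\tau_2(r) - \tau_0)/r$ and $(\tau_1(r) - \tau_0)/r$ tend to $\alpha'(s_0)/|\alpha'(s_0)|$ and $-\alpha'(s_0)/|\alpha'(s_0)|$ respectively, so their arguments differ by $\pi$ in the limit (with the sign fixed by the counterclockwise convention). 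This yields the $i\pi$. The main technical obstacle is purely bookkeeping: one must choose analytic branches of $\log(\tau-\tau_0)$ on the two sub-arcs so that they agree at $\tau_1(r)$ and $\tau_2(r)$ in a way consistent with the geometric argument, and then verify that the ambiguity in the final formula is absorbed into a coherent interpretation of $\log(b-\tau_0)$ and $\log(a-\tau_0)$. Once these branches are set, the computation above assembles cleanly to give the stated value of the principal value in both the open and closed cases.
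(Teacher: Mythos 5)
Your proposal is correct and follows essentially the same route as the paper: antidifferentiate to $\log(\tau-\tau_0)$ on the deleted contour, observe that the moduli of $\tau_1(r)-\tau_0$ and $\tau_2(r)-\tau_0$ cancel, and show the argument difference tends to $\pi$ via the tangent line at $\tau_0$ (your first-order expansion of the parameterization is exactly the ``more direct and elegant line of reasoning'' the paper sketches before giving its longer quadrant-and-arctan version). The one piece you defer as ``bookkeeping'' is handled explicitly in the paper and is worth not skipping: a single branch of $\log(\tau-\tau_0)$ is fixed by cutting along a curve from $\tau_0$ to $\infty$ lying to the right of the direction of travel of $C$ and never meeting $C$ again, which simultaneously makes $\log(b-\tau_0)-\log(a-\tau_0)$ unambiguous and pins down the sign $+\pi i$ rather than $-\pi i$ (or $\pi i + 2\pi i k$).
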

\smallskip

\begin{proof}
We first note that the primitive of the integrand in question involves the complex logarithm, given by log$\left( \tau - \tau_0\right)$. To be able to perform the computation, we must outline how a branch of the logarithm should be chosen. The general form will be to cut along a curve that connects $\tau_0$ and $\infty$, on the right side with respect to the direction of travel of the contour $C$. Moreover, let the curve wander in such a way that it never passes through the contour again. This is clearly possible if the contour is open, and in the case that it is closed, our choice of cutting on the right hand side ensures the existence of such a cut.

\smallskip

Let $C_r$ denotes the contour without the problematic region as in the definition, and let $z_1$ and $z_2$ denote the newly created endpoints of the contour $C_r$ that result from the removal of the neighbourhood around $\tau_0$. We compute:

\begin{align*}
\int_{C_r} \frac{d\tau}{\tau - \tau_0} & = \log \left( \tau - \tau_0 \right) \Big|_{a}^{z_1}+ \log \left( \tau - \tau_0 \right) \Big|_{z_2}^{b} \\
& = \log(b - \tau_0) - \log(a - \tau_0) + \log(z_1 -\tau_0) - \log(z_2 -\tau_0)\\
&= \log(b - \tau_0) - \log(a - \tau_0) + \ln \left( \frac{\labs z_1 - \tau_0 \rabs}{\labs z_2 - \tau_0 \rabs} \right) +\\& \quad\,\, i \left[ \arg(z_1-\tau_0) - \arg(z_2-\tau_0) \right].
\end{align*}

Notice that the values of $\labs z_1-\tau_0 \rabs$ and $\labs z_2 -\tau_0 \rabs$ are equal, which in turn means the corresponding $\ln$ term vanishes. Also, if $C$ is a closed contour, then $a=b$, so the first two $\log$ terms cancel each other out.

\smallskip

Our concerns now lie with what happens to the difference of the arguments in the limit. This proof outline relies on some geometric intuition, and can be replaced with the following line of reasoning: As the contour is smooth, it has a well-defined linear approximation at $\tau_0$. Therefore as we decrease the radius of the open ball around $\tau_0$, the points $z_1$ and $z_2$ are tending towards that linear approximation. Therefore, the angle between the two tends to $\pi$ in the limit. This is certainly the more direct and elegant line of reasoning, but we shall give a more in-depth outline in the following paragraph for the cautious reader.

\smallskip

For simplicity, let us assume that $\tau_0 = 0$, and that on a neighbourhood of $\tau_0$, the contour $C$ lies entirely in the 1st and 3rd quadrants of the coordinate plane, and the tangent to $C$ at each point in this neighbourhood is bounded. By way of a geometric argument, the limit we wish to take is invariant under translation and rotation, thus this same argument may be applied for general contours. Note that the quadrant condition makes use of the fact that $C$ is continuously differentiable on a neighbourhood of $\tau_0$. Let $\gamma(t) = x(t) + iy(t)$ parameterize the contour $C$ such that $\gamma(0)= \tau_0 = 0$, and for $t>0$, $\gamma(t)$ is in the 1st quadrant. Let $\theta'$ be the angle that the tangent vector to $C$ at the origin makes with the positive real axis. Using continuity of the $\arctan$ function and L'Hopital's rule:
\begin{align*}
\lim_{t \to 0^+}\arg{\gamma(t)} & =
\lim_{t \to 0^+}\arctan\left( \frac{y(t)}{x(t)} \right) \\
& = \arctan\left( \lim_{t \to 0^+} \frac{y(t)}{x(t)} \right)\\
& = \arctan\left( \frac{y'(0)}{x'(0)} \right)\\
& = \theta' .
\end{align*}

Now, as $r \to 0$, both $z_1 \to \tau_0$ and $z_2 \to \tau_0$. Since $z_1$ lies in the 3rd quadrant, $\arg(z_1) = \pi + \arg(-\overline{z_1})$. By above, this implies that as $r \to 0$, $\arg(z_1)$ approaches $\pi + \theta'$ if we substitute in $-\overline{\gamma}$. As $z_2$ lies in quadrant 1, it follows directly from the limit calculation that $\arg(z_2)$ approaches $\theta'$ as $r \to 0$. This shows that
\smallskip
\[
\lim_{r \to 0} \left[ \arg(z_1-\tau_0) - \arg(z_2-\tau_0) \right] = \pi
\]

\noindent
whence
\smallskip
\[
\lim_{r \to 0^+} \int_{C_r} \frac{d\tau}{\tau-\tau_0} = \log(b-\tau_0) - \log(a-\tau_0) + \pi i .
\]

\end{proof}

We are now in a position to give sufficient conditions for the existence of the principal value of the singular integral with Cauchy kernel.

\begin{theorem}
\label{CTI existence}
Let $C$ be a smooth contour in the $\tau$-plane. Suppose that the function $\varphi: C \to \mathbb{C}$ is Hölder continuous on $C$. Then for any fixed point $\tau_0$ on the contour not coinciding with the endpoints of $C$, the principle value of the singular integral
\smallskip
\[
\int_{C} \frac{\varphi (\tau)}{\tau -\tau_0}\,d\tau
\]

\noindent
exists. If $C$ is an open contour with true endpoints $a$ and $b$, we can represent the principal value as
\smallskip
\[
P.V.\int_{C}\frac{\varphi (\tau)}{\tau -\tau_0}\,d\tau = P.V.\int_{C} \frac{\varphi (\tau) - \varphi(\tau_0)}{\tau -\tau_0}\,d\tau + \varphi(\tau_0)\left[ \log(b-\tau_0) - \log(a-\tau_0)+\pi i \right] .
\]

On the other hand, if $C$ is a closed contour, we can represent the principal value as 
\smallskip
\[
P.V.\int_{C} \frac{\varphi (\tau)}{\tau -\tau_0}\,d\tau = P.V.\int_{C} \frac{\varphi (\tau) - \varphi(\tau_0)}{\tau -\tau_0}\,d\tau + i\pi\varphi(\tau_0).
\]
\end{theorem}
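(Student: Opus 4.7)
The natural plan is to exploit the algebraic decomposition
\[
\frac{\varphi(\tau)}{\tau - \tau_0} = \frac{\varphi(\tau) - \varphi(\tau_0)}{\tau - \tau_0} + \frac{\varphi(\tau_0)}{\tau - \tau_0},
\]
integrate both pieces over the truncated contour $C_r$, and pass to the limit $r\to 0^{+}$. The second term contributes $\varphi(\tau_0)\int_{C_r} d\tau/(\tau-\tau_0)$, whose limit is handed to us directly by Lemma \ref{ipi_integral} and produces the boundary/$\pi i$ terms in the desired formula (with the two $\log$ terms collapsing to $0$ in the closed-contour case). So the entire content of the theorem reduces to showing that
\[
\lim_{r\to 0^{+}} \int_{C_r} \frac{\varphi(\tau)-\varphi(\tau_0)}{\tau-\tau_0}\,d\tau
\]
exists, which is where the Hölder hypothesis must be used.

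First I would show that this remaining integrand is in fact absolutely integrable along $C$, so that its principal value coincides with an ordinary contour integral. By the Hölder condition, the integrand is bounded in modulus by $A|\tau-\tau_0|^{\lambda-1}$. Next I would parameterize the portion of $C$ lying in a small neighbourhood of $\tau_0$ by the arc-length (or $\mathcal{C}^1$) parameterization $\gamma$, so that $|\gamma(t)-\tau_0|$ behaves linearly in $|t-t_0|$; this linear comparability is legitimate because the tangent vector does not vanish at $\tau_0$ (so $\gamma$ is locally bi-Lipschitz by Theorem \ref{paramLip} applied to $\gamma$ and, after a local inversion, to $\gamma^{-1}$). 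Under this change of variable, the modulus of the integrand is dominated by a constant multiple of $|t-t_0|^{\lambda-1}$, which is integrable on a neighbourhood of $t_0$ since $\lambda-1>-1$. Away from $\tau_0$ the integrand is continuous, so the full integral $\int_C |\varphi(\tau)-\varphi(\tau_0)|/|\tau-\tau_0|\,|d\tau|$ is finite.

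Having established absolute integrability, I would conclude by a standard dominated convergence (or estimation-lemma) argument that $\int_{C_r}$ converges to $\int_C$ as $r\to 0^{+}$: the difference is bounded by $\int_{C\setminus C_r}|\varphi(\tau)-\varphi(\tau_0)|/|\tau-\tau_0|\,|d\tau|$, which tends to $0$ because the integrand is integrable and the measure of the excised arc shrinks to $0$. Combining the two pieces gives the stated formula for open $C$; for closed $C$, the cancellation $a=b$ kills the $\log$ difference and leaves only $i\pi \varphi(\tau_0)$.

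The main obstacle is the bi-Lipschitz comparison $|\gamma(t)-\tau_0|\asymp |t-t_0|$ near the singular point: without it, the Hölder bound $|\tau-\tau_0|^{\lambda-1}$ does not automatically translate into an integrable function of the parameter. This is really where the smoothness assumptions on $C$ (continuously differentiable parameterization with non-vanishing tangent) are indispensable, and everything else in the proof is routine algebra and application of the preceding lemmas.
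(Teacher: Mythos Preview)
Your proposal is correct and follows essentially the same route as the paper: the identical algebraic decomposition, the same appeal to Lemma~\ref{ipi_integral} for the unit-density piece, and the same reduction of the difference-quotient piece to a convergent $p$-integral via a $\mathcal{C}^1$ parameterization and the H\"older bound.

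The one place your bookkeeping diverges is precisely the step you flag as ``the main obstacle'': you argue for a bi-Lipschitz comparison $|\gamma(t)-\tau_0|\asymp|t-t_0|$ (invoking a local inversion of $\gamma$), whereas the paper sidesteps this by observing that the function
\[
t\ \longmapsto\ \frac{t-t_0}{\alpha(t)-\alpha(t_0)}\cdot\alpha'(t)
\]
extends continuously to $t_0$ (with value $1$) and is therefore bounded near $t_0$. This continuity argument yields exactly the upper bound you need on $|t-t_0|/|\alpha(t)-\alpha(t_0)|$ without ever producing an explicit inverse-Lipschitz constant, and then the paper uses Proposition~\ref{holder_composition} with Theorem~\ref{paramLip} to make $\varphi\circ\alpha$ itself $\lambda$-H\"older in $t$. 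The two arguments are equivalent in content, but the paper's formulation is a bit cleaner and removes what you identified as the sticking point.
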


\begin{proof}
Let $\lambda$ denote the Hölder index of $\varphi$ on $C$. It is clear that we may write the singular integral in the statement of the theorem as
\smallskip
\[
P.V.\int_{C} \frac{\varphi (\tau)}{\tau -\tau_0}\,d\tau = P.V.\int_{C} \frac{\varphi (\tau) - \varphi(\tau_0)}{\tau -\tau_0}\,d\tau + \varphi(\tau_0) \cdot P.V.\int_{C}  \frac{d\tau}{\tau-\tau_0} .
\]

By Lemma $\ref{ipi_integral}$, the second singular integral on the right hand side exists and has the anticipated values for both cases. All that is left to show is to show that the remaining singular integral on the right hand side involving the difference quotient exists in the sense of the principal value. Let $\alpha: [0,1] \to \mathbb{C}$ be a parameterization of the contour $C$, and let $t_0 \in (0,1)$ denote $\alpha^{-1}(\tau_0)$. Expanding this principal value integral into an improper integral of a real variable, we have

\begin{align*}
P.V.\int_{C} \frac{\varphi (\tau) - \varphi(z)}{\tau -\tau_0}\,d\tau & = \int_{0}^{1} \frac{\varphi(\alpha(t))-\varphi(\alpha(t_0))}{\alpha(t)-\alpha(t_0)}\cdot\alpha'(t)\,dt \\
& = \int_{0}^{1} \frac{\varphi(\alpha(t))-\varphi(\alpha(t_0))}{t-t_0}\cdot\frac{t-t_0}{\alpha(t)-\alpha(t_0)}\cdot\alpha'(t)\,dt .
\end{align*}

It is clear that this integral converges away from the point $t_0$, so we only need to check convergence of the integral over some a neighbourhood of $t_0$. Choose $0 < x_1 < t_0$ and $t_0 < x_2 < 1$, and let $I = [x_1,x_2]$. We have reduced the problem to showing that the improper integral

\[
\int_{x_1}^{x_2} \frac{\varphi(\alpha(t))-\varphi(\alpha(t_0))}{t-t_0}\cdot\frac{t-t_0}{\alpha(t)-\alpha(t_0)}\cdot\alpha'(t)\,dt \tag{1}
\]

\noindent
exists. Define a function $f:I \to \mathbb{C}$ by

\[
f(t) = 
\begin{cases}
\frac{t-t_0}{\alpha(t)-\alpha(t_0)}\alpha'(t) & \text{if} \;\,t \neq t_0 \\
1 & \text{if} \;\, t = t_0
\end{cases}.
\]

This function is continuous on $I$, so it follows that there is $M>0$ with the property that $\labs f \rabs \leq M$ on $I$. By only considering the behaviour of $\alpha$ on $I$, it is immediate that $\alpha'$ is continuous on $I$ since we are on a strictly smaller neighbourhood of $t_0$, and hence it is uniformly bounded on $I$. By Theorem $\ref{paramLip}$, $\alpha$ is Lipschitz continuous on $I$, and by Theorem $\ref{holder_composition}$ the composition $\varphi \circ \alpha$ is $\lambda$-Hölder on $I$. Letting $A > 0$ denote the Hölder index of this composition, we have the following bound for the integrand of (1):
\[
\labs\frac{\varphi(\alpha(t))-\varphi(\alpha(t_0))}{t-t_0}\cdot\frac{t-t_0}{\alpha(t)-\alpha(t_0)}\cdot\alpha'(t) \rabs \leq \frac{AM}{\labs t-t_0\rabs^{1-\lambda}} .
\]

When integrated over $I$, the right hand side of the inequality forms a convergent $p$-integral, so by the comparison test for integrals on the real line, the integral in (1) exists, which completes the proof.
\end{proof}

\begin{remark}
Notice that use of the Hölder condition on the density $\varphi$ was only used locally (and often is). That is, for existence of the principle value integral at any given point, the density need only be locally Hölder at that particular point (and of course being integrable everywhere else on the contour). This will continue to be the case for upcoming results.
\end{remark}

\smallskip

We can now extend the domain of definition of the Cauchy type integral, provided the density function behaves accordingly.

\smallskip

\begin{definition} [Value of the Cauchy Type Integral on the Singular Line]
Suppose that the density $\varphi$ of the Cauchy type integral $\Phi$ is locally Hölder at a point $\tau_0$ on the contour of integration. Then we define the value of $\Phi$ at $\tau_0$ as:
\smallskip
\[
\Phi(\tau_0) \coloneqq P.V.\:\frac{1}{2\pi i}\int_C \frac{\varphi(\tau)}{\tau - \tau_0} \,d\tau .
\]
\end{definition}

\section{The Jump Formula}
\label{theJumpFormula}

With a working definition of the Cauchy type integral on the contour of integration, we can begin an analysis of the behaviour of the function near the singular line. More specifically, we are interested in how the original integral definition off the contour and the principal value definition on the contour interact with each other when taking limits to the singular line. This section will accomplish this in two main steps.

\smallskip

The following is merely a convenient technicality.

\begin{proposition}
\label{convinient parameterization}
For any fixed point $\tau_0$ on a contour $C$ that is not one of its endpoints, and any $x_0 > 0$, we can parameterize $C$ with a function $\gamma:[-x_0,x_0] \to C$ so that $\gamma(0) = \tau_0$.
\end{proposition}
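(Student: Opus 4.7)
The plan is to reparameterize an existing parameterization of $C$ by a piecewise linear rescaling that sends $0$ to the preimage of $\tau_0$ and sends $\pm x_0$ to the endpoints of the original parameter interval. Since the statement only asks for existence of \emph{some} parameterization satisfying $\gamma(0)=\tau_0$, we are free to postcompose any existing parameterization with an orientation-preserving homeomorphism of intervals.

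Concretely, start with a parameterization $\alpha:[a,b]\to C$ of the same orientation as the default one. Because $\tau_0$ is not an endpoint of $C$, its preimage $t_0 = \alpha^{-1}(\tau_0)$ lies in the open interval $(a,b)$. Next, define $\phi:[-x_0,x_0]\to[a,b]$ by
\[
\phi(s) = \begin{cases} t_0 + \dfrac{t_0 - a}{x_0}\, s & \text{if } s \in [-x_0, 0], \\[4pt] t_0 + \dfrac{b - t_0}{x_0}\, s & \text{if } s \in [0, x_0]. \end{cases}
\]
Each piece is an affine increasing bijection; the two pieces agree at $s=0$ with value $t_0$, so $\phi$ is a continuous strictly increasing bijection from $[-x_0,x_0]$ onto $[a,b]$ with $\phi(0)=t_0$. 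Set $\gamma \coloneqq \alpha\circ\phi$. Then $\gamma(0) = \alpha(t_0) = \tau_0$ and $\gamma$ traces out $C$ in the same direction as $\alpha$.

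The only thing left to verify is that $\gamma$ still qualifies as a parameterization of a (piecewise) smooth contour in the sense fixed in the notation section. On each of the half-intervals $[-x_0,0]$ and $[0,x_0]$, $\phi$ is affine with nonzero derivative, so $\gamma$ is continuously differentiable there with $\gamma'(s) = \phi'(s)\,\alpha'(\phi(s))$, which is nonvanishing because both factors are nonzero. At $s=0$ the left and right derivatives of $\gamma$ may differ by a positive scalar, so $\gamma$ is, at worst, piecewise smooth with a single additional junction at $0$; this is compatible with the paper's convention that smooth contours are allowed to be treated as piecewise smooth. There is no real obstacle here; the argument is essentially bookkeeping about affine rescaling of parameter intervals.
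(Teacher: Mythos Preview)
Your proof is correct and is essentially identical to the paper's: both start from an existing parameterization $\alpha$, locate $t_0=\alpha^{-1}(\tau_0)$ in the interior of the parameter interval, and precompose with a piecewise-affine increasing bijection sending $0\mapsto t_0$ and $\pm x_0$ to the endpoints. The only cosmetic differences are that the paper first normalizes the domain of $\alpha$ to $[0,1]$ and does not spell out the piecewise-smoothness check you added.
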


\begin{proof}
Suppose that $\alpha: [0,1] \to C$ parameterizes a contour $C$. Note that the interval $[0,1]$ can always be used as the domain of any parameterization since the map $t \mapsto (b-a)t + a$ provides a bijection between $[0,1]$ and an arbitrary closed interval $[a,b]$. Write $t_0 \in (0,1)$ to represent $\alpha^{-1}(\tau_0)$. For $x_0 > 0$, we can define $\gamma$ by:
\smallskip
\[
\gamma(t) = 
\begin{cases}
\alpha\left(t_0\left[\frac{t}{x_0}+1\right]\right) &\! \text{if} \; -x_0\leq t\leq 0 \\[7pt]

\alpha\left( \frac{t}{x_0}[1-t_0]+t_0 \right) &\! \text{if} \;\: 0<t\leq x_0 .
\end{cases}
\]
\end{proof}

As is often the case, we use a Lemma for the heavy lifting. Known as ``The Basic Lemma," the result shows that the same density condition as in Theorem \ref{CTI existence} is enough for a closely related integral-defined function to be continuous.

\begin{lemma}
\label{basic lemma}
Let $C$ be a smooth contour in the $\tau$-plane. Let $\tau_0$ be a point on $C$ that does not coincide with an endpoint of $C$. Suppose that the function $\varphi: C \to \mathbb{C}$ satisfies the Hölder condition. Define $\psi: \mathbb{C} \to \mathbb{C}$ by
\smallskip
\[
\psi(z) = \int_{C} \frac{\varphi(\tau) - \varphi(\tau_0)}{\tau - z} \,d\tau .
\]

Then $\psi$ is a well-defined function, and is continuous at the point $z=\tau_0$. That is,
\[
\lim_{z \to \tau_0} \psi(z) = \psi(\tau_0) = P.V.\int_C \frac{\varphi(\tau) - \varphi(\tau_0)}{\tau - \tau_0} \,d\tau .
\]
\end{lemma}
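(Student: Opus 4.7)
Set $\omega(\tau) := \varphi(\tau) - \varphi(\tau_0)$, which is $\lambda$-Hölder with constant $A$ and satisfies $\omega(\tau_0) = 0$; then $\psi(z) = \int_C \omega(\tau)/(\tau-z)\,d\tau$, taken as a principal value when $z \in C \setminus \{\tau_0\}$. The vanishing of $\omega$ at $\tau_0$ is precisely what lets us tame the Cauchy kernel singularity as $z \to \tau_0$, and framing the problem this way is what makes every estimate below go through.

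For well-definedness I would split into three cases. If $z \notin C$ the integrand is continuous on the compact contour $C$, so the integral converges. If $z \in C \setminus \{\tau_0\}$, Theorem \ref{CTI existence} applied to the Hölder density $\omega$ supplies the principal value. If $z = \tau_0$, smoothness of $C$ makes Euclidean distance and arclength from $\tau_0$ comparable on a small arc, so the Hölder estimate $\labs \omega(\tau)/(\tau - \tau_0) \rabs \leq A\labs \tau-\tau_0\rabs^{\lambda - 1}$ is integrable (convergent $p$-integral after passing to the arclength parameter), and the integral converges absolutely, hence equals its principal value.

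For continuity at $\tau_0$, fix $\varepsilon > 0$ and split $C = C_r \cup C^*$ with $C_r = C \cap B(\tau_0,r)$, writing $\psi = I_r + J_r$ accordingly. The outer piece is straightforward: for $z \in B(\tau_0, r/2)$ one has $\labs \tau - z\rabs \geq r/2$ on $C^*$, so $\omega(\tau)/(\tau - z)$ is uniformly continuous on $C^* \times \overline{B(\tau_0,r/2)}$ and Corollary \ref{pass cont lim to contour integral} yields $J_r(z) \to J_r(\tau_0)$. The real work is making $\labs I_r(z)\rabs$ small uniformly in $z$ by first shrinking $r$ and then restricting $z$. For $z = \tau_0$ the integrability bound above already gives $\labs I_r(\tau_0)\rabs = O(r^\lambda)$. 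For $z \in C_r\setminus\{\tau_0\}$, I would decompose $\omega(\tau) = [\omega(\tau) - \omega(z)] + \omega(z)$ inside the principal value, producing a difference integral bounded absolutely by $A\int_{C_r}\labs \tau-z\rabs^{\lambda-1}\labs d\tau\rabs = O(r^\lambda)$, together with a jump piece $\omega(z) \cdot P.V.\!\int_{C_r} d\tau/(\tau - z)$ whose principal value is $O(\labs \ln r\rabs)$ by Lemma \ref{ipi_integral} uniformly for $\labs z - \tau_0\rabs \leq r/2$, while $\labs \omega(z)\rabs \leq A\labs z-\tau_0\rabs^\lambda$ sends the product to zero as $z \to \tau_0$. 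For $z \notin C$ close to $\tau_0$, I would further split $C_r$ according to whether $\labs \tau - \tau_0\rabs > 2\labs z-\tau_0\rabs$: on the outer portion $\labs \tau - z\rabs \geq \labs \tau-\tau_0\rabs/2$ produces the integrable majorant $2A\labs \tau - \tau_0\rabs^{\lambda - 1}$, while on the inner portion the bound $\labs \omega(\tau)\rabs \leq A(2\labs z-\tau_0\rabs)^\lambda$ together with a log-type bound on $\int\labs d\tau\rabs/\labs \tau - z\rabs$ near $\tau_0$ (from the smooth parameterization) gives a contribution of order $\labs z-\tau_0\rabs^\lambda \labs \ln \labs z - \tau_0\rabs\rabs$.

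The main obstacle is exactly this uniform control of $I_r(z)$ when $z$ sits on $C$ or approaches $\tau_0$ along a direction that keeps $z$ very close to $C$. The naive triangle-inequality bound on the integrand is not integrable when $z \in C$, so one genuinely needs the cancellation intrinsic to the principal value, which is recovered cleanly through Lemma \ref{ipi_integral}. Off $C$ but near it, $1/\labs\tau-z\rabs$ produces a logarithmic blow-up that can only be offset by the Hölder vanishing $\labs\omega(\tau)\rabs \leq A\labs\tau-\tau_0\rabs^\lambda$ of the numerator at $\tau_0$. Once these two estimates are in hand, choosing $r$ first and then $z$ produces $\labs\psi(z)-\psi(\tau_0)\rabs < \varepsilon$ by the triangle inequality.
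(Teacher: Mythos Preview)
Your plan is largely sound and in places cleaner than the paper's argument, but there is a genuine gap in the case $z\notin C$ when $z$ approaches $\tau_0$ \emph{tangentially}. In the inner piece you assert a bound of order $\labs z-\tau_0\rabs^{\lambda}\,\labs\ln\labs z-\tau_0\rabs\rabs$, claiming a ``log-type bound on $\int\labs d\tau\rabs/\labs\tau-z\rabs$ near $\tau_0$.'' That integral, however, is controlled by $\ln(\rho/d)$ with $\rho=\labs z-\tau_0\rabs$ and $d=\operatorname{dist}(z,C)$, not by $\labs\ln\rho\rabs$. For a super-tangential approach, say $d=e^{-1/\rho}$, the product $\rho^{\lambda}\ln(\rho/d)\sim\rho^{\lambda-1}$ blows up when $\lambda<1$, so your uniform bound on $\labs I_r(z)\rabs$ fails. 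The H\"older vanishing $\labs\omega(\tau)\rabs\le A\labs\tau-\tau_0\rabs^{\lambda}$ is centered at $\tau_0$, whereas the near-singularity of $1/(\tau-z)$ sits at the nearest point $\tau^*\in C$ to $z$; when $\tau^*\ne\tau_0$ these do not offset one another. Your closing paragraph correctly flags this region as the obstacle but does not supply a mechanism that actually works there.

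The paper's proof takes a different route precisely to avoid this. It writes $\psi(z)-\psi(\tau_0)$ with the factored kernel $\dfrac{z-\tau_0}{\tau-z}\cdot\dfrac{\varphi(\tau)-\varphi(\tau_0)}{\tau-\tau_0}$, bounds the first factor by $1/\sin(\omega_0)$ via the law of sines for \emph{non-tangential} approach, observes that the resulting estimate is uniform in $\tau_0$, and then bootstraps: the case $z\in C$ follows from this uniformity by a triangle-inequality trick, and the tangential off-curve case is reduced to the previous two via a Muskhelishvili-type intermediate point $\tau'\in C$ close to both $z$ and $\tau_0$ with $z$ non-tangential to $\tau'$. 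Your dyadic splitting at $\tau_0$ could be repaired in the same spirit: for $z\notin C$ close to $C$, recentre the add--subtract decomposition at $\tau^*$ rather than $\tau_0$ (so that $\labs\tau-z\rabs\ge c\labs\tau-\tau^*\rabs$ by orthogonality of the nearest-point segment), which yields an honest $O(r^{\lambda})$ bound; but as written the proposal does not do this. By contrast, your treatment of $z\in C_r\setminus\{\tau_0\}$ via $\omega(\tau)=[\omega(\tau)-\omega(z)]+\omega(z)$ together with Lemma~\ref{ipi_integral} is correct and is in fact more direct than the paper's uniformity argument for that case.
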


\begin{proof}
Note that $\psi$ being well-defined is a consequence of Theorem \ref{CTI existence}. All we need to show now is that it is indeed continuous at $\tau_0$. Write
\smallskip
\[
\labs\psi(z)-\psi(\tau_0)\rabs = \blabs P.V.\int_{C} \frac{z-\tau_0}{\tau-z} \cdot\frac{\varphi(\tau) - \varphi(\tau_0)}{\tau-\tau_0}\,d\tau \brabs .\tag{1}
\]

Let $\delta>0$ denote the radius of the disk centered at $\tau_0$ that contains exactly one arc of the contour $C$. We denote this arc by $C_{\delta}$. In general, we will have to consider smaller neighbourhoods around $\tau_0$. To avoid confusion in notation, let us agree that the value of $\delta$ may be reset to smaller values as needed. As the singularity is contained on the arc $C_{\delta}$, it will be useful to split the integral in (1) into two pieces: one over this arc, the other over the rest of the contour $C-C_{\delta}$.
Let $A > 0$ and $0<\lambda\leq1$ denote the Hölder constant and index of $\varphi$ on the contour $C$, respectively.

To bound the first term of the integrand in (1) over $C_{\delta}$, we need to consider two cases. First, we will examine the non-tangential limit. Consider the triangle with vertices $z$, $\tau$ and $\tau_0$. Label the angles at vertices $\tau$ and $\tau_0$ by $\theta_{\tau}$ and $\theta_{\tau_0}$, respectively. By the law of sines, we have the relationship:
\smallskip
\[
\frac{\sin(\theta_{\tau_0})}{\labs \tau-z \rabs} = \frac{\sin(\theta_{\tau})}{\labs z-\tau_0\rabs} .
\]

\smallskip
As the path $z$ approaches $\tau_0$ is not tangent to the contour $C$, there is a $\delta$-neighbourhood of $\tau_0$ in which the angle $\theta_{\tau_0}$ is bounded away from both 0 and $\pi$. Therefore, there is $\omega_0 > 0$ with the property that $\sin(\omega_0) \leq \sin(\theta_{\tau_0})$ in this neighbourhood of $\tau_0$. We can then derive the bound
\smallskip
\[
\labs \frac{z-\tau_0}{\tau-z}\rabs \leq \frac{1}{\sin(\omega_0)} \tag{2}.
\]

Using Proposition \ref{convinient parameterization}, let $\alpha: [-1,1] \to C_{\delta}$ be a parameterization of the arc where $\alpha(0) = \tau_0$ and the endpoints of the domain map to the endpoints of the arc. If we restrict the image of $\alpha$ by shrinking $\delta$ from its current value, we also shrink the domain of the parameterization. So for $0<\delta'\leq\delta$, the restriction is given by $\alpha_{\delta'}:[l(\delta'), u(\delta')] \to C_{\delta'}$, where the functions $l$ and $u$ satisfy $-1 \leq l(\delta') < 0$ and $0 < u(\delta') \leq 1$, both functions map to the endpoints of the arc $C_{\delta'}$ and both tend to zero as $\delta'$ tends to zero.

Let $I_1$ denote the integral in (1) taken over the arc $C_{\delta}$. Applying (2) and the usual comparison theorem for contour integrals:
\smallskip
\begin{align*}
\labs I_1 \rabs = \blabs P.V.\int_{C_{\delta}} \frac{z-\tau_0}{\tau-z} \cdot\frac{\varphi(\tau) - \varphi(\tau_0)}{\tau-\tau_0}\,d\tau\brabs
& \leq \frac{1}{\sin(\omega_0)} \int_{l(\delta)}^{u(\delta)} \labs\frac{\varphi(\alpha_{\delta}(t)) - \varphi(\alpha_{\delta}(0))}{\alpha_{\delta}(t) - \alpha_{\delta}(0)}\rabs \,dt .
\end{align*}
\smallskip

The integral on the right-hand side of the inequality can be bounded above by a convergent \emph{p}-integral as was demonstrated in the proof of Theorem \ref{CTI existence}. Using the same notation for constants, we have
\begin{align*}
\frac{1}{\sin(\omega_0)} \int_{l(\delta)}^{u(\delta)} \labs\frac{\varphi(\alpha_{\delta}(t)) - \varphi(\alpha_{\delta}(0))}{\alpha_{\delta}(t) - \alpha_{\delta}(0)}\rabs \,dt &\leq \frac{AM}{\sin(\omega_0)} \int_{l(\delta)}^{u(\delta)} \frac{dt}{\labs t \rabs^{1-\lambda}} .
\end{align*}

Finally note that since we may need to shrink $\delta$, the values of $l$ and $u$ may no longer be symmetric about 0, so we shall take $r(\delta)$ to be the larger of the two in terms of magnitude. Thus
\begin{align*}
\labs I_1 \rabs & \leq \frac{AM}{\sin(\omega_0)} \int_{-r(\delta)}^{r(\delta)} \frac{dt}{\labs t \rabs^{1-\lambda}} \\
& = \frac{2AM}{\sin(\omega_0)} \lim_{x \to 0^{+}} \int_{x}^{r(\delta)} \frac{dt}{t^{1-\lambda}}\\
& = \frac{2AMr(\delta)^{\lambda}}{\lambda\sin(\omega_0)}.\tag{3}
\end{align*}

Let $\varepsilon > 0$ be given. By all previous discourse, we can choose $\delta > 0$ such that $\labs I_1 \rabs < \varepsilon/2$ . Let $I_2$ denote the integral in (2) taken over the contour(s) $C-C_{\delta}$. Notice that the integrand of $I_2$ does not possess any singularities, so it is indeed continuous at the point $z$. That is, there is $\delta$\textemdash no larger than the previously established value\textemdash satisfying $\labs I_2 \rabs < \varepsilon/2$, for $z$ $\delta$-close to $\tau$. Alas, we obtain
\smallskip
\[
\labs \psi(z) - \psi(\tau_0) \rabs \leq \labs I_1 \rabs + \labs I_2 \rabs < \varepsilon 
\]
\smallskip
\noindent
for $\labs z - \tau_0 \rabs < \delta$. This takes care of the non-tangential case.

Next, we show that $\psi$ behaves as a continuous function at $\tau_0$ when the path of approach is along contour itself. Notice that in our analysis thus far, our estimates have been independent of the point $\tau_0$ on the contour. That is, for $z$ approaching $\tau_0$ on either side of the contour, $\psi$ behaves as a uniformly continuous function.

\smallskip

Let $\varepsilon > 0$ be given. Then for $z$ not on the contour, there exists a $\delta > 0 $ such that if $\labs z -\tau_0 \rabs < \delta$, then $\labs \psi(z) - \psi(\tau_0) \rabs < \varepsilon/2$. Here, we are assuming $z$ is taking a non-tangential approach. Choose a point $\tau_1$ on the contour $C$ such that $\labs \tau_0 - \tau_1 \rabs < \delta/2$ and a point $z$ on a fixed side of $C$ so that $\labs z-\tau_0 \rabs < \delta/2$. Then $\labs z -\tau_1\rabs < \delta$, and in view of the uniformity of non-tangential approaches mentioned previously, we have $\labs \psi(z) - \psi(\tau_1) \rabs < \varepsilon/2$. Thus
\smallskip
\[
\labs \psi(\tau_0) - \psi(\tau_1) \rabs \leq \labs \psi(\tau_0) - \psi(z) \rabs + \labs \psi(z) - \psi(\tau_1) \rabs
<\varepsilon
\]

\noindent
proving continuity for limiting values along the contour.

\smallskip

Finally, we look at the tangential limit case. Both of the previously proven cases can be used. On page 34 in ``Singular Integral Equations" by Muskhelishvili \cite{muskhelishvili2008singular}, it is shown that we can always find a point $\tau'$ on the contour $C$ such that the quantities $\labs z - \tau'\rabs$ and $\labs z - \tau_0 \rabs$ are both arbitrarily small, where $\tau'$ approaches $\tau_0$ along a non-tangential path. The proof is quite technical, and best left to the curious reader to examine its details. This result combined with the first two cases guarantees that
\smallskip
\[
\labs \psi(z) - \psi(\tau_0) \rabs \leq \labs \psi(z) - \psi(\tau') \rabs + \labs \psi(\tau') - \psi(\tau_0) \rabs
<\varepsilon
\]

\noindent
whenever $z$ is sufficiently close to $\tau_0$. This concludes the proof of the lemma.
\end{proof}

\smallskip

With all the difficult work behind us, the main result follows relatively easily, although what it reveals is rather unintuitive.

\smallskip

\begin{theorem}[The Jump Formula on Smooth Contours]
Let $C$ be a smooth contour in the $\tau$-plane. If the density $\varphi$ of the Cauchy type integral
\smallskip
\[
\Phi (z) = \frac{1}{2\pi i} \int_{C} \frac{\varphi(\tau)}{\tau - z}\,d\tau
\]

\noindent
satisfies the Hölder condition on $C$, then both $\Phi^{+}$ and $\Phi^{-}$ have limiting values on approach to a fixed point $\tau_0$ on the contour (not coinciding with its endpoints), which are given by the following formulae:
\smallskip
\[
\Phi^{\pm}(\tau_0) = \pm\frac{1}{2}\varphi(\tau_0)+\Phi(\tau_0). 
\]

This equation is referred to simply as the Jump formula, or sometimes as the Sokhotski formulae.
\end{theorem}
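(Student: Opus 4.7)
The plan is to split $\Phi$ into a piece that is continuous across the contour (governed by the Basic Lemma) and a much simpler piece whose jump we can compute by hand. Writing $\varphi(\tau) = [\varphi(\tau) - \varphi(\tau_0)] + \varphi(\tau_0)$ in the integrand gives the decomposition
\[
\Phi(z) \;=\; \frac{1}{2\pi i}\int_{C}\frac{\varphi(\tau) - \varphi(\tau_0)}{\tau - z}\,d\tau \;+\; \frac{\varphi(\tau_0)}{2\pi i}\int_{C}\frac{d\tau}{\tau - z},
\]
valid for $z$ off of $C$, and (replacing the two integrals by their principal values) also for $z = \tau_0$ by the value assigned in the last definition of the previous section. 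The first term is precisely $\frac{1}{2\pi i}\psi(z)$ for the function $\psi$ in Lemma \ref{basic lemma}, so by that lemma it extends continuously to $\tau_0$ with the same limit from $D^+$, from $D^-$, and along $C$. All of the jump behaviour is therefore contained in the second term.

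The second term I would denote $J(z) := \frac{1}{2\pi i}\int_C \frac{d\tau}{\tau - z}$. For a closed contour $C$, Theorem \ref{CIF} applied to the constant function $1$ gives $J(z) = 1$ for $z \in D^+$ and $J(z) = 0$ for $z \in D^-$, while Lemma \ref{ipi_integral} evaluates the principal value on $C$ to $i\pi$, so $J(\tau_0) = \tfrac{1}{2}$. Thus $J$ has non-tangential limits $1$ and $0$ from the two sides, differing from its on-contour value by $+\tfrac{1}{2}$ and $-\tfrac{1}{2}$, respectively. Combining with the continuous first term,
\[
\Phi^{\pm}(\tau_0) \;=\; \lim_{\substack{z\to \tau_0\\ z\in D^{\pm}}}\!\!\left[\frac{\psi(z)}{2\pi i} + \varphi(\tau_0)\,J(z)\right]
\;=\; \frac{\psi(\tau_0)}{2\pi i} + \varphi(\tau_0)\cdot\bigl(\tfrac{1}{2} \pm \tfrac{1}{2}\bigr)
\;=\; \Phi(\tau_0) \;\pm\; \tfrac{1}{2}\varphi(\tau_0),
\]
which is the claimed formula. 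For an open contour the same decomposition works verbatim: the $\log(b-\tau_0)-\log(a-\tau_0)$ contribution from Lemma \ref{ipi_integral} appears identically in $J(\tau_0)$ and in both one-sided limits of $J$, so it cancels in the differences $\Phi^\pm(\tau_0) - \Phi(\tau_0)$ and only the $\pm\tfrac{1}{2}$ jump survives.

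The main obstacle in this theorem has already been removed by the Basic Lemma, which did the genuinely delicate work of handling the principal value through a limit along an arbitrary (including tangential) approach to $\tau_0$. What remains for the present theorem is essentially a two-line computation: a decomposition of $\varphi$ at $\tau_0$ and an appeal to $\int_C d\tau/(\tau - z) = 2\pi i,\,0$ from the two sides of a closed curve, together with the explicit principal value $i\pi$ on the curve itself. The only subtle point I would be careful about is to justify that the off-contour limits of the logarithmic antiderivative of $1/(\tau - z)$ match $J(z) = 1$ and $J(z) = 0$ from the correct sides in the open-contour case, which amounts to tracking a single $\pm\pi$ jump in the argument of $\log(\tau - z)$ as $z$ crosses the arc near $\tau_0$.
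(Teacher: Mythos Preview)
Your proof is correct and follows essentially the same decomposition as the paper: write $\Phi = \tfrac{1}{2\pi i}\psi + \varphi(\tau_0)J$, invoke the Basic Lemma for the continuity of $\psi$ at $\tau_0$, and read off the jump from the explicit values $J=1,\,0,\,\tfrac{1}{2}$ on $D^+$, $D^-$, $C$. The one place the paper differs is the open-contour case: instead of tracking logarithm branches directly as you sketch, it simply closes $C$ by an auxiliary arc on which $\varphi$ is set to zero, reducing at once to the closed case---this sidesteps the branch bookkeeping, and in particular avoids the slip in your final sentence (for an \emph{open} arc $J$ is not literally $1$ and $0$ on the two sides; only the \emph{jump} across the arc is $1$).
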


\begin{proof}
We may begin by assuming that the contour $C$ is closed. If the given contour happens to be open, we attach an open smooth contour $C'$ to the end points of $C$ so that a smooth closed contour is formed, and then define $\varphi$ to be zero on $C'$. 
Fix a point $\tau_0$ on the contour $C$ as described in the statement of the theorem. By Theorem \ref{CTI existence}, $\Phi$ exists in the sense of the Cauchy principal value. We will use the notation $\Phi^{+}(\tau_0)$ and $\Phi^{-}(\tau_0)$ as shorthand for a limit of $\Phi$ taken from inside $D^{+}$ and $D^{-}$ on approach to $\tau_0$, respectively.

First, recall the possible values of the following singular integral:
\smallskip
\[
\int_{C} \frac{d\tau}{\tau - z} = \begin{cases}
2\pi i &\! \text{if} \;\: z \in D^{+}\\
0 &\! \text{if} \;\: z \in D^{-}\\
\pi i &\! \text{if} \;\: z \in C .
\end{cases}
\]

\noindent
Note that the value of the integral for $z$ located on the contour of integration is principal and was computed in Lemma $\ref{ipi_integral}$. The fact that this integral defines a step function in $z$ is key. Consider the following integral-defined function on $\mathbb{C}$:
\smallskip
\[
\psi(z) \coloneqq \frac{1}{2\pi i} \int_{C} \frac{\varphi(\tau) - \varphi(\tau_0)}{\tau - z} \,d\tau = \frac{1}{2\pi i} \int_{C} \frac{\varphi(z)}{\tau-z}\,d\tau -\varphi(\tau_0)\left[\frac{1}{2\pi i} \int_{C} \frac{d\tau}{\tau - z}\right] .
\]

From the Basic Lemma \ref{basic lemma}, we know that this function $\psi$ is continuous at $\tau_0$, whence $\psi(\tau_0) = \psi^{+}(\tau_0) = \psi^{-}(\tau_0)$. If we look at the limits as $z$ tends to $\tau_0$ from each of these three distinct regions, we get the following three equations:

\begin{align*}
\Rightarrow \lim_{\substack{z \to \tau_0 \\ z \in D^{+}}} \psi(z) &= \lim_{\substack{z \to \tau_0 \\ z \in D^{+}}} \Biggr[ \frac{1}{2\pi i} \int_{C} \frac{\varphi(\tau)}{\tau-z}\,d\tau -\varphi(\tau_0)\left[\frac{1}{2\pi i} \int_{C} \frac{d\tau}{\tau - z}\right] \Biggr] \\
& = \Phi^{+}(\tau_0) - \varphi(\tau_0) \tag{1}\\[1em]
\Rightarrow \lim_{\substack{z \to \tau_0 \\ z \in D^{-}}} \psi(z)  &= \lim_{\substack{z \to \tau_0 \\ z \in D^{-}}} \Biggr[ \frac{1}{2\pi i} \int_{C} \frac{\varphi(\tau)}{\tau-z}\,d\tau -\varphi(\tau_0)\left[\frac{1}{2\pi i} \int_{C} \frac{d\tau}{\tau - z}\right] \Biggr] \\
& = \Phi^{-}(\tau_0) \tag{2}\\[1em]
\Rightarrow \psi(\tau_0) & = P.V.\:\frac{1}{2\pi i} \int_{C} \frac{\varphi(\tau)}{\tau - \tau_0}\,d\tau -\varphi(\tau_0)\left[P.V.\:\frac{1}{2\pi i} \int_{C} \frac{d\tau}{\tau - \tau_0}\right] \\
& = \Phi(\tau_0) -\frac{1}{2}\varphi(\tau_0) .\tag{3}
\end{align*}

\noindent
By the proceeding remarks, the expressions (1), (2), and (3) are equal:
\smallskip
\[
\Phi^{+}(\tau_0) - \varphi(\tau_0) = \Phi^{-}(\tau_0) = \Phi(\tau_0) -\frac{1}{2}\varphi(\tau_0).
\]

\noindent
Solving for both of $\Phi^{\pm}$ yields the announced formula.
\end{proof}
\smallskip

\begin{remark}
The Jump formula can be rearranged to obtain the following identity over $C$ minus the endpoints:
\smallskip
\[
\varphi(\tau) = \Phi(\tau)^+ - \Phi(\tau)^- .
\]

\noindent
So we see that a Hölder continuous function defined on a smooth contour $C$ can be written as the difference of the boundary values of two functions which are each analytic on one side of $C$.
\end{remark}

\medskip

Let us return to the version of the Cauchy integral formula we proved near the beginning of the chapter. We can begin to justify the lengths went to that merely weakened the function conditions on the boundary. The reader may have been unconvinced that this result was all that necessary. After all, it is non-trivial to find a function that is analytic in some domain, yet \emph{only} continuous on the boundary of that domain (as a side note, the Cauchy type integral with Hölder density is in general one of these functions). Here, we concern ourselves with the the statement of the theorem, and what it says about the Jump formula. Portions of the next chapter will promote the importance of the technique used in its proof.

\smallskip

Let $f^{\pm}$ denote analytic functions in $D^{\pm}$, respectively, and let us assume that $f^{-}(\infty) = 0$. Suppose that both functions have a continuous extension to the contour $C$. Defining a function $\varphi$ on $C$ as the difference of the extensions, we can write:
\smallskip
\begin{align*}
\frac{1}{2\pi i}\int_{C} \frac{\varphi(\tau)}{\tau -z}\,d\tau &= \frac{1}{2\pi i}\int_{C} \frac{f^{+}(\tau)}{\tau -z}\,d\tau - \frac{1}{2\pi i}\int_{C} \frac{f^{-}(\tau)}{\tau -z}\,d\tau \tag{1}.
\end{align*}

Notice that the functions $f^+$ and $f^-$ fit the requirements of Theorems' \ref{CIF} and \ref{CIF analytic in D-}, respectively. Thus we obtain the two valuations:
\smallskip
\begin{align*}
\frac{1}{2\pi i}\int_{C} \frac{f^{+}(\tau)}{\tau -z}\,d\tau & = \begin{cases}
f^{+}(z) &\! \textup{if} \;\: z \in D^{+}\\
0 &\! \textup{if} \;\: z \in D^{-}
\end{cases} \tag{2}\\[1em]
\frac{1}{2\pi i}\int_{C} \frac{f^{-}(\tau)}{\tau -z}\,d\tau & = \begin{cases}
0 &\! \textup{if} \;\: z \in D^{+}\\
-f^{-}(z) &\! \textup{if} \;\: z \in D^{-} .\tag{3}
\end{cases}
\end{align*}

Using (2) and (3), we can write (1) as
\smallskip
\[
\frac{1}{2\pi i}\int_{C} \frac{\varphi(\tau)}{\tau -z}\,d\tau = \begin{cases}
f^{+}(z) &\! \textup{if} \;\: z \in D^{+}\\
f^{-}(z) &\! \textup{if} \;\: z \in D^{-} .
\end{cases}
\]

This is a version of the Cauchy integral formula that extends to an even more general class of functions on $C$.
An interpretation of the Jump formula is that most ``nice" functions\textemdash which in our case meant Hölder continuous functions\textemdash arise in this way, as the \emph{difference} of analytic functions.

\smallskip

The Cauchy integral formula, in its original formulation, reproduces an analytic function if you start with an analytic function on and inside a closed contour. It also solves the boundary value problem for analytic functions, but only if the boundary values are that of an analytic function. The rigidity of analytic functions means that even most smooth functions are not the boundary values of any analytic function. This means that this boundary value problem is not well-posed. The Jump formula actually fixes the issue by replacing it with a new problem. Given a density function, find two analytic functions whose difference is the density function. As we have shown, if the density function is Hölder continuous, then the Cauchy type integral gives us the analytic functions that reproduce in the same way the Cauchy integral formula does, but now on \emph{both} sides of the contour, and it solves the boundary value problem. The Jump formula says that every Hölder continuous function is a jump of two analytic functions. Boundary value problems and well-posedness will be discussed in the first section of the next chapter.
\chapter{The Research Problem}
\label{chapter2}

In the previous chapter, we studied the Cauchy type integral and found sufficient conditions for its existence, and then using the Jump decomposition we extended the Cauchy integral formula. A solid understanding of the definitions, techniques, and results used throughout chapter 1 are essential for this chapter. This is because the presentation is centred around how they come together to tell a more complete story of this corner of mathematics, and where it is \emph{incomplete}. 

\smallskip

In stark contrast to chapter 1, our discussion will be carried out in a very mathematically informal matter. What this means is that ideas will be greatly simplified, and details will be sparse, perhaps non-existent at times. Of course, these topics and their genealogy take years of training to thoroughly understand. The goal of this chapter is just to give the reader some appreciation for \emph{why} we care in the first place.

\section{Boundary Value Problems}

The term ``Boundary value problem" is used to describe problems whose solutions are functions satisfying a particular differential equation, and have the added constraint that they must agree with predefined values on the boundary of the domain. The book ``Boundary Value Problems" \cite{gakhov2014boundary}\textemdash in which chapter 1 takes its main inspiration from\textemdash is concerned with solutions to this class of problems. All of the material covered in chapter 1 is used to begin forming a basis of knowledge for understanding boundary value problems\textemdash and their solutions\textemdash in general.
\smallskip

We need to establish some jargon before further discussion. A particular boundary value problem is said to be \emph{well-posed} if a solution exists, that solution is unique, and the solution depends continuously on the initial values. If a question is not well-posed, then either modifications to the conditions, or a new question entirely is needed to arrive at a problem that \emph{is}. Well-posed problems are of particular importance to both mathematicians and scientists. First, having a unique solution is a must as it indicates that there is enough data present for the problem to be accurately captured by the model. Second, the solution depending continuously on the data ensures \emph{stability}. In reality, measurements always have some margin of error, and we would expect that a small change in the measurement should not drastically change what the solution tells us. 

Note that this is only the minimum requirements defined for well-posedness of general boundary value problems. Different disciplines and even particular problems may require further, stronger conditions.

\subsection{The Dirichlet Problem}
Let us focus on a specific example of a boundary value problem that also happens to be one of the most important. It is known as the Dirichlet problem.

\smallskip

Let $D$ be a domain in the finite plane whose boundary is a Jordan curve (called a \emph{Jordan domain}), and let $u: \partial D \to \mathbb{F}$ be a continuous function on the boundary of $D$, where the symbol $\mathbb{F}$ is used to denote either the field of real or complex numbers. 

The Dirichlet problem states: Is there a uniquely-determined function $f: \overline{D} \to \mathbb{F}$ satisfying the following properties:

\begin{enumerate}[i]
    \item\!\!. $f$ is continuous.
    \item\!\!. $\Delta f = 0$ on $D$. That is, $f$ is a harmonic function on $D$.
    \item\!\!.  $f = u$ on the boundary of $D$.
\end{enumerate}

The problem is in fact well-posed. There are specific, interesting examples for certain functions $u$ and certain domains, but the general formula for $f$ is given by
\smallskip
\[
f(z) = - \frac{1}{2\pi}\int_{\partial D} u(w) \frac{\partial g}{\partial n_w} (w,z) \,ds_w
\]

\noindent
where $n_w$ is the unit outer normal with respect to $w$, meaning it is the directional derivative of $g$ in the direction of the normal vector to the curve, $ds_w$ means we are integrating the variable $w$ with respect to arc length, and $g$ is the Green's function of the Laplacian operator. The details of this general solution is not needed in future discussions, and is merely stated for reference.

\subsection{The Holomorphic Boundary Value Problem}

Next, we look a specific example of a boundary value problem whose solution is connected to the Jump problem. 

\smallskip

Again, let $D$ be a Jordan domain. The holomorphic boundary value problem poses: Given a function $u: \partial D \to \mathbb{C}$ that is continuous (or possibly, with even ``nicer" requirements), does there exist a holomorphic function that satisfies the following:

\begin{enumerate}[i]
    \item\!\!. $f$ extends to the boundary of $D$
    \item\!\!. $f$ is holomorphic on $D$
    \item\!\!. $f = u$ on the boundary of $D$.
\end{enumerate}

This problem is \textbf{not} well-posed! It only works for \emph{some} $u$, indicating that we haven’t stated the right problem yet. For example, the function $u(z)=1/z$ is in particular continuous on the boundary of the unit disk centered at the origin. However, since holomorphic functions are uniquely determined, it follows that the function $f$ must equal $u$, but then $f$ is not holomorphic on $\mathbb{D}$.

\medskip

Let us impose a stricter condition on $u$ and see what can be said. Assume that $u$ is not only continuous on the boundary, but \emph{analytic}. As analyticity is a property that is defined in open sets, $u$ has an analytic extension to a neighbourhood of the boundary. To keep things simple, assume that the boundary of $D$ is analytic, that is, it can be parameterized by an analytic function on the unit circle (See Appendix \ref{Appendix 2}). If there were a solution to this holomorphic boundary value problem, it would first need to be an analytic continuation of $u$ to the domain $D$. Since analytic continuations are unique, the function $f$ must be given by the Cauchy type integral:
\smallskip
\[
f(z)=\frac{1}{2\pi i}\int_{\partial D} \frac{u(\tau)}{\tau - z}\,d\tau . \tag{1}
\]

Why? The Cauchy integral formula tells us that for points sufficiently close to the boundary, $f = u$, where $u$ is now referring to the analytic extension of $u$ to the neighbourhood. Let us denote $f^{\pm}$ as we did in the previous chapter, where the plus-minus denotes the interior and exterior domains, respectively. The function $f$ we defined above is therefore equal to $f^{+}$. By the Jump decomposition, we know that $u = f^{+} - f^{-}$, and finally to satisfy the third requirement of the problem, we must have $f^{+} = f = u$, whence $f^{-} \equiv 0$.

\smallskip

To summarize, for the holomorphic boundary value problem to be well-posed with data function $u$, the solution must have the property that $f^{-} \equiv 0$. Not only is this a \emph{necessary} condition, it is also \emph{sufficient}. That is, there exists a solution $f$ if and only if $f^{-}\equiv 0$.

\smallskip

We have already developed the tools to demonstrate this! Suppose that $f^{-} \equiv 0$. By Corollary \ref{Analytic fn are lipschitz on smooth contours} and the Jump formula, we know that 
\smallskip
\[
u = f^+ - f^-
\]

\noindent
on the boundary of $D$. Note that both functions on the right-hand side are analytic by Theorem \ref{CTI is analytic}, and can be analytically continued to the boundary. We can justify this by moving the contour slightly outward\textemdash staying in the region of holomorphicity of $u$\textemdash using the principal of deformations of paths. The assumption that $f^- \equiv 0$ forces the relation $u = f^+$ on the boundary. This shows that $f^+$ solves the holomorphic boundary value problem (See Appendix \ref{Appendix 0} for details regarding the Jump problem for holomorphic density functions).

\smallskip

Returning to our example, observe that when $u(z) = 1/z$, a quick computation reveals that $f(2)$ is nonzero, so $f^- \not\equiv 0$.

\medskip

Notice that all the work we have done to understand the Cauchy integral and jump formula gives us a clear understanding of when the holomorphic boundary value problem is well-posed. This is stated as follows:
\smallskip
\begin{center}
\emph{Given an analytic function u on the boundary of D, does there exist a pair of functions $f^{\pm}$ holomorphic on $D^{\pm}$ such that $u = f^+ - f^-$ on the boundary of $D$?}
\end{center}
\smallskip

We now know that the answer is yes. The analytic conditions imposed on $u$ and the boundary of $D$ are flexible as well. For example, we could assume that the boundary of $D$ is smooth, and $u$ is Hölder continuous, as we did in the previous chapter. By doing this, we ended up solving a stronger version on the problem. The weaker version we talked about just now was if the boundary of $D$ is analytic, and $u$ is analytic on a neighbourhood of the boundary. We have now seen that both are well-posed problems.
 
\smallskip
 
The Jump problem is also connected to the Dirichlet problem. It was Fredholm who used the solution to a related jump problem that concerns itself with real function to solve the Dirichlet problem.


\section{Quasicircles}
\label{quasicircle}

In the previous chapter, we only concerned ourselves with \emph{smooth} contours. That is, contours that have a continuously differentiable parameterization, and non-vanishing tangent vectors. These were our examples of “nice” curves in the plane. Naturally, we may ask ourselves: “What is the \emph{worst} curve we can develop a Jump formula for?” By worst, we mean the most poorly-behaved curves, or least \emph{regular} curves, in some well-defined sense. This is where the quasicircle comes in. The technical details for the theory of quasicircles is far too advanced for this article, but an attempt will be made to give an intuitive picture to the reader. Let us begin the motivation by first thinking about a \emph{goal} of mathematics in general.

\smallskip

Very broadly, every field in mathematics has a basic \emph{object} that attempts to capture an \emph{interesting} property, or physical phenomena. Mathematicians then collectively work out the details to converge on a set of axioms that form the foundation for the objects and the field itself. For example, group theory is about understanding the minimum \emph{structure} needed to talk about \emph{symmetry}. The object is the group itself, and it is the group axioms combined with a binary operation that captures symmetry is an abstract way. Similarly, topology is about understanding the minimum structure needed to talk about continuity. The objects are a family of ``open" sets that form a space, and we say that a function between spaces is continuous when its preimage of open sets are themselves open sets. Complex analysis is no different at its core. It is about understanding the minimum structure needed to talk about complex differentiable maps, which we often refer to interchangeably as \emph{analytic} or \emph{holomorphic} functions (although typically one is chosen to emphasise the particular equivalent property we are thinking of the functions having). The basic objects in complex analysis are called Riemann surfaces. So far, we have only worked with two very basic examples of these objects: The plane $\mathbb{C}$, and the plane plus the point at infinity, which is called the Riemann sphere. One way of viewing Riemann surfaces is as sets equipped with a notion of \emph{angle}. Note that one-to-one holomorphic functions, called \emph{conformal} maps, are functions that are locally angle-preserving. In a sense, holomorphic maps are to Riemann surfaces as homeomorphisms are to topological spaces, or group homomorphisms are to groups, or linear transformations are to vector spaces. All of these are examples of structure preserving maps, and are central to talking about and comparing instances of the basic objects in each respective field. There is a disparity with complex analysis when compared to other subjects, especially the two given as examples here. It is expected that the reader is not familiar with general Riemann surfaces\textemdash even with enough background to read the previous chapter\textemdash whereas the study of many other fields almost begins with the definition of their basic objects.

\smallskip

We are ready to return to the main topic of the section: quasicircles. Conformal maps, as defined above, have a very satisfying geometric interpretation. Recall that the Jacobian of a differentiable function is a matrix that gives the best linear approximation of the function at a point, and its entries are the partial derivatives of the function. The Jacobian of a surjective conformal map sends circles to circles. The implication here is that a conformal map is locally a rotation and a rescaling. One can show this using the fact that the partial derivative of analytic functions satisfy the Cauchy-Riemann equations. This characterization of the Jacobian ends up being equivalent to the map itself begin angle-preserving. If a map is not conformal, but is continuously real differentiable, bijective, and orientation preserving, then the map sends circles to ellipses. By orientation preserving, we mean that the direction of traversal of the parameterization of any curve in the domain remains the same under the map. An example of a map that does \emph{not} preserve orientation is the map $z \mapsto 1/z$. Under this map, the unit circle has its direction of traversal reversed. This is easily seen when we plug in the standard parameterization of the unit circle into the map: $e^{i\theta} \mapsto e^{-i\theta}$. 

The fact that this not-quite conformal map sends circles to ellipses means that it is not angle-preserving in general. A \emph{quasiconformal} map is roughly a function whose Jacobian sends circles to ellipses, where the ratio between the major and minor axis of the ellipses in the image is globally bounded. These maps effectively deform Riemann surfaces by distorting their angle structure, creating a new Riemann surface. 

\smallskip

A \emph{quasicircle} can be defined as the image of the unit circle under a quasiconformal map from the plane onto itself. The technical definition underlying quasiconformal maps permit quasicircles to be incredibly rough and jagged, like a fractal. Two well known examples of rough curves that are in fact quasicircles are the Koch snowflake, and quadratic Julia sets where the constant term lies in the main component of the Mandelbrot set.

Quasicircles need not be rectifiable, meaning they can \textbf{locally} have infinite length. An intriguing implication of this is that they may bound a finite area with an infinitely long perimeter. It is perhaps far too ambitious to thoroughly cover \emph{why} quasicircles are of so much interest to complex analysis, or even the intersection of subfields we are concerned with. Instead, we will point in the direction of a few reasons here, and expand on the third later. 

\smallskip

First, there is a way that quasicircles can be placed in a one-to-one correspondence with the moduli space of Riemann surface \cite{lehto2012univalent}. The technicalities here are not important, only that a correspondence with a space containing the fundamental objects of complex analysis exists. Second, they show up as the ideal limit in physical processes, including random Brownian motion, percolation, and conformal field theories. Finally (and most importantly to us), many results involving the Cauchy type integral hold if and only if the curve is a quasicircle. Note that ``if and only if" statements are \emph{rare} in analysis when compared to other fields. Often the converse of a theorem has a pathological counterexample, especially in Real Analysis (see \cite{gelbaum2003counterexamples}). Discovering such a correspondence can indicate something significant is happening. Some of the famous results the reader might be familiar with include the Heine-Borel theorem, which characterizes compact sets in $n$-dimensional Euclidean space, the characterization of open sets in $\mathbb{R}$, and the equivalence of convergence and the Cauchy criterion of sequences in $\mathbb{R}$. Notice how each of these fundamental results relate back to completeness and compactness. These properties are \emph{incredibly} important in analysis, and are closely linked to existence and uniqueness. 

\smallskip

Unfortunately, many of the basic formulations of theorems in introductory complex analysis are unable to say anything about quasicircles. So far, the contours we have been integrating over have always consisted of finitely many piecewise smooth arcs, joined end to end. Theorems that we have relied on, like the Cauchy integral formula, no longer make sense with such weak constraints on the curve. A goal of modern research in the field is to solve this problem. We will postpone this portion of the discussion to the end of the chapter. We will briefly mention a work-around currently being used to deal with quasicircles. The general technique we used to prove Theorem \ref{CIF} is employed to get a handle on domains whose boundaries are quasicircles. In the proof, we essentially approximated the curve via homotopy and use a limit to get the result. This was to make up for loss of analyticity on the boundary. For quasicircles, the same general principal is employed, but with a different end goal: Approximate the boundary with more regular curves, and take a limit.

\section{Dirichlet Space}

Differential equations permeate mathematics. They tend to show up in unexpected places, far removed from physics where we tend to think of them originating from. A generic problem in mathematics is to solve an equation of the form $Lu = f$, where $L$ is a differential operator (such as the Laplace operator $\Delta$), $u$ is the function we wish to find, and $f$ is a function roughly representing some condition. The general procedure of constructing solutions to the equation is to show existence and uniqueness of solutions, which in turn is done by constructing the solution with an approximation (say a limit of a sequence of functions). Both existence and uniqueness rely on the ideas of completeness and compactness, so we ask ourselves: what does completeness and compactness \emph{look} like in a space of functions? Attempting to answer this question leads us to the study of functional analysis. Very broadly, functional analysis looks to transforms existence, uniqueness, and approximations into completeness and compactness of function spaces. It does this by studying function spaces using the tools of linear algebra. Different differential equation have different natural function spaces which are well suited for their study. This connection between the two is a current area of research in analysis.

\smallskip

A general class of function spaces very important to mathematics and physics are \emph{Hilbert spaces}. A Hilbert space is a complete inner product space. A space being complete has the same meaning as what we mean when we say the the real or complex numbers are complete: every Cauchy sequence in the space converges in said space. An inner product space is a vector space endowed with an inner product; a way of generalizing the idea of “length” and “orthogonality” to abstract vector spaces. The particular function space we will talk about is an example of a Hilbert space, known as \emph{Dirichlet space}.

\smallskip

Function spaces need a notion of measuring functions so that we can define not only their \emph{size} in the space, but also their \emph{distance} to other functions in the space. We will use a measurement called the \emph{Dirichlet energy}. More precisely if $D$ is a domain on the Riemann sphere, and $f$ is analytic on $D$, we call the quantity
\[
\frac{1}{\pi} \iint\limits_{D-\{\infty\}} |f'|^2 \,dA
\]

\noindent
the Dirichlet energy of $f$, where $dA$ is the Lebesgue area measure. Intuitively, the Dirichlet energy of an injective function on a set measures the area of the image of the function, as in this case $\text{det}\,Df = |f'|^2$. This intuition falls apart if $f$ is not injective, and cannot fully be recovered in general. One could say that in this case the energy measures the area of a multisheeted Riemann surface, but it is possible that there are infinitely many sheets, bad behaving singularities, and so on, which limit how far the intuition will take us.

\smallskip

The \emph{Dirichlet space} $\mathcal{D}(\Omega)$ of an open set $\Omega$ in the complex plane is a function space whose elements are holomorphic functions on $\Omega$ with finite Dirichlet energy. While the informal description of Dirichlet energy is sufficient for our discussion, one technicality needs to be taken care of. Since the Dirichlet energy of a constant function is always 0 (why?), it only defines a semi-norm on the space. We can turn the energy into a norm by imposing a normalizing restriction on the space so that only functions that vanish somewhere are included. In a sense, we don't ``lose" any functions, as we can always add a constant to a function to make it vanish somewhere. The implications of adding such a restriction have not yet been fully explored, but so far it seems to work. The notation to refer to the subspace of functions who vanish at a point $p$ is $\mathcal{D}_p(\Omega)$. 

\smallskip

Another similar function space often employed is the \emph{harmonic} Dirichlet space $\mathcal{D}_{\textup{harm}}(\Omega)$, consisting of the harmonic functions on $\Omega$ with finite Dirichlet energy.

In the harmonic Dirichlet space of a Jordan domain on the Riemann sphere, every element has an extension to the boundary, except on a negligible subset, which is a technical condition of a set having ``zero capacity". For reference, this is an even smaller set than a measure-zero set. The extension is more complicated that simply an “approach”. We consider wedge-shaped approaches that avoid problematic angles (think back to the different approach cases in the proof of Lemma \ref{basic lemma}). Now, if two functions in the space have the same boundary values in the sense defined above, then it turns out these functions must be the same.


\section{Faber Polynomials and Series}
\label{FaberPolynomialDiscussion}

As previously stated, the main goal of complex analysis is to understand complex-differentiable maps. A remarkable fact of these maps is that they may be written as a convergent power series centered at each point of holomorphicity, a condition often referred to as analyticity. The converse is also true. The sum of a complex power series is holomorphic at each point interior to its circle of convergence. In a sense, holomorphic functions behave as “infinite polynomials”, because their series representation can be manipulated as such whenever they converge. This may give another insight as to why we care so much about complex-differentiable maps. Polynomials are the most well-behaved and easy to work with functions.

\smallskip

One of the drawbacks of power series is that they are convergent only on disks. We are forced to derive a new series representation and stitch them together to cover a domain, should that domain be a shape which is not exactly a disk. Even one singularity restricts the convergent power series to the disk with a radius that is the distance from the center to the singularity, even if the function is holomorphic everywhere else. This is the motivation behind Faber polynomials and series. Recall that function spaces are vector spaces whose elements are functions, and the space is endowed with a norm; a way of measuring distance between functions. With some more technicalities, we can ``complete" the space so that we may do analysis as we know it within the space. Note that these spaces are infinite-dimensional, and have differing behaviour to their finite-dimensional counterparts. We can think of a power series of a function as a representation of said function with respect to a basis of polynomials of the form $(z-a)^n$, where $a$ is a point in $\Omega$. While useful, a regular power series representation of a function does not capture the true domain of analyticity in general. The idea behind Faber polynomials is that they provide a basis for the Dirichlet space $\mathcal{D}_{\infty}(\Omega)$ that is formed to fit the ``shape" of $\Omega$. More specifically, they give us to way to write a function in the space as a series of polynomials that converges on $\Omega$. Furthermore, this representation can be shown to be unique, and in fact existence and uniqueness occur if and only if the boundary of $\Omega$ is a quasicircle. Deriving the Faber polynomials can be done in many different ways, but the way it is done in \cite{tietz1957faber} actually comes from a version of the Jump formula (see Appendix \ref{Appendix 2} for the details of this application).

\section{Cauchy Integrals in Dirichlet Space}

The importance of the Cauchy integral in complex analysis is clear. An idea that has been looming in the background\textemdash allowing us to prove many theorems\textemdash is the \emph{regularity} of the contours of integration. For simplicity, we have assumed our contours to be smooth, that is, continuously differentiable, simple, and having non-vanishing tangent vectors. Moreover, many of the standard complex analysis results found in an introductory text also assume such a level of regularity. As previously discussed, quasicircles are so irregular that we don't even have a working definition for integration on them. In this section, we will present a highlight reel of results involving Cauchy integrals that have been attained without a proper definition.

\smallskip

Let $\mathcal{H}(\Gamma, \Omega)$ denote the set of boundary values for elements in the harmonic Dirichlet space of a Jordan domain $\Omega$. Recall that in the homotopic argument used in the first chapter to prove the Cauchy integral formula, we invoked the Riemann mapping theorem to map concentric circles in the unit disk to smooth closed curves in the interior of a contour. We want to make used of this procedure for Jordan domains. For a given harmonic function $h$ in $\mathcal{H}(\Gamma, \Omega)$, we define the Cauchy integral to be: 
\smallskip
\[
\frac{1}{2\pi i}\int_{\Gamma} \frac{h(\tau)}{\tau - z}\,d\tau = \lim_{r \to 1^{-}} \frac{1}{2\pi i}\int_{f(|\tau| = r)} \frac{H(\tau)}{\tau - z}\,d\tau
\]

\noindent
where $H$ is the unique element in $\dharm(\Omega)$ with boundary values $h$, $z$ is a point not on the curve $\Gamma$, and $f:\mathbb{D} \to \Omega$ is a biholomorphic map from the disk to the domain, that exists by the Riemann mapping theorem. Let $\Omega^+$ and $\Omega^-$ denote the bounded and unbounded components of a Jordan curve $\Gamma$, respectively. Define the integral operator $J_{\Gamma}: \dharm(\Omega^{+}) \to \mathcal{D}_{\infty}(\Omega^{+}\!\cup \Omega^{-})$ by
\smallskip
\[
h \mapsto \lim_{r \to 1^-} \frac{1}{2\pi i} \int_{f(|\tau| = r)} \frac{h(\tau)}{\tau - z}\,d\tau
\]

\noindent
which sends a harmonic function $h$ into the Cauchy integral. Since the Riemann map is analytic, the curves approximating the boundary will be analytic curves, and thus the integral is well-defined. For any Jordan domain, the limit will exist and the map itself is bounded, meaning there is a $c > 0$ with the property that
\smallskip
\[
\| J_{\Gamma}\,h\|_{\mathcal{D}_{\infty}(\Omega^{+}\cup\,\Omega^{-})} \leq c\, \| h \|_{\dharm(\Omega_{+})}
\]

\noindent
for every function $h\in\dharm(\Omega)$. In the theory of functional analysis, boundedness in maps is one of the most important and desirable properties. 

A quick side-note: The codomain of $J_{\Gamma}$ can be written as the direct sum $\mathcal{D}(\Omega^+)\oplus\mathcal{D}_{\infty}(\Omega^-)$. This is done for convenience, as the output of the map depends on which complement of the curve $z$ lies in.

It can be shown that $J_{\Gamma}$ is an isomorphism\textemdash meaning it is bounded and bijective\textemdash if and only if $\Gamma$ is quasicircle. In turn, this is in fact equivalent to existence of a unique, convergent Faber series representation for every function $h \in D_{\infty}(\Omega^-)$.

\smallskip

Let us give a brief overview of a special type of function. An \emph{antiholomorphic} function $\bar{f}$ is simply the complex conjugate of a holomorphic function $f$. These functions are similar to their holomorphic counterparts in that they are angle-preserving, but contrary to holomorphic functions, they are \emph{orientation reversing}. It also turns out that every complex harmonic function can be locally written as the sum of a holomorphic and antiholomorphic function. For our purposes, they are important because the harmonic Dirichlet space can be written as the direct sum of the Dirichlet space and the conjugate space normalized at 0:
\smallskip
\[
\dharm(\Omega^+)=\mathcal{D}(\Omega^+)\oplus\overline{\mathcal{D}_{0}(\Omega^+)}.
\]

\noindent
Suppose now that $0 \in \Omega^+$ and $\infty \in \Omega^-$. Define the integral operator $J_{+-}: \overline{\mathcal{D}_{0}(\Omega^{+})} \to \mathcal{D}_{\infty} (\Omega^-)$ by
\smallskip
\[
\bar{h} \mapsto \lim_{r \to 1^-} \frac{1}{2\pi i} \int_{f(|\tau| = r)} \frac{h(\tau)}{\tau - z}\,d\tau
\]

\noindent
which takes the harmonic conjugate of $h$ and applies the Cauchy integral to it. In this case, $z\in\Omega^-$. There is a theorem that shows the map $J_{+-}$ is an isomorphism if and only if $\Gamma$ is a quasicircle. That is, given $\varphi$ as the boundary values of a harmonic function in $\dharm(\Omega^+)$, the surjectivity of the map means that $\varphi$ is the difference of analytic functions in $\mathcal{D}_{\infty} (\Omega^+)$ and $\mathcal{D}_{\infty} (\Omega^-)$, respectively, and injectivity of the map guarantees that the decomposition is unique. The similarity to the Jump problem is no coincidence.

\smallskip

To recap, all of these maps involving the Cauchy integral have what analyists consider “desirable” properties only if the curve is a quasicircle.

\smallskip

Now, let $f$ be a Riemann map from $\Omega^+$ onto the unit disk $\mathbb{D}$. The map $C_{f^{-1}}: \overline{\mathcal{D}_0(\mathbb{D})} \to \mathcal{D}_p(\Omega^+)$ defined by 
\smallskip
\[
h \mapsto h(f^{-1})
\]

\noindent
is an isometry, meaning that is norm-preserving and an isomorphism. As a corollary, it can be shown that the composition $I_f \coloneqq J_{+-}\circ C_{f^{-1}}: \overline{\mathcal{D}_0 (\mathbb{D})}\to\mathcal{D}_{\infty}(\Omega^-)$ is an isomorphism. This particular map is important because it can be used to obtain the Faber polynomials of the domain. First, the monomial basis under the map becomes the Faber polynomials of the set $\Omega^+$, that is,
\smallskip
\[
I_f(\bar{z}^n) = F_n
\]

\noindent
where $F_n$ is the $n$th Faber polynomial of $\Omega^-$. For a function $\overline{h(z)} = \sum_{n=1}^{\infty} a_n \bar{z}^n$ converging in the space  $\overline{\mathcal{D}_{0}(\mathbb{D})}$, we can use the map $I_f$ to obtain a Faber series of an element in $\mathcal{D}_{\infty}(\Omega^-)$:
\smallskip
\[
I_f(\bar{h}) = \sum_{n=1}^{\infty} a_n F_n .
\]

The existence of the series follows from surjectivity of $I_f$, and uniqueness from injectivity. The convergence of this Faber series follows from boundedness of $I_f$ plus convergence of $\bar{h}$. It is also the case that every function in $\mathcal{D}_{\infty}(\Omega^-)$ has a unique Faber series converging in the space if and only if $\Gamma$ is a quasicircle. Combining these observations and the fact that $I_f$ is easily invertible, we can use it to derive the Faber series of any element in $\mathcal{D}_{\infty}(\Omega^-)$. A similar procedure can be used for the Dirichlet space of the interior domain.

\smallskip

The point is not to give the reader a deep understanding of the result presented here. Rather, it is to show than quasicircles, the Jump problem, Dirichlet spaces, and Faber series are all connected, and in particular to show that quasicircles are the natural class of curves for these objects.

\section{The Problem}

In this chapter, we talked about generalizing the Cauchy integral and Jump problem, and some of the strides made to clarify these objects in a greater context. Remarkably, all the work done by mathematicians in the field has done so without using a principal value definition for the Cauchy integral. The question we ask is:

\bigskip
\centerline{\emph{How do we define a principal value integral on a quasicircle?}}
\bigskip

As of the writing of this article, it is unknown if it is even possible to do so. Quasicircles are so poorly-behaved, that even between any two distinct points located on one, the path along the curve connecting the two points may have \emph{infinite} length. To gain an initial understanding of why this is an issue, consider the definition of the Riemann integral \textemdash the first integral encountered by a student of analysis. When looking at the terms in a Riemann sum, we are multiplying the function value at a tag-choice by the \emph{length} of an interval. More general definitions of integration are not a remedy, either. Thus, integrating over a quasicircle in general doesn't even make sense. This is an odd state of affairs, seeing as quasicircles are a fundamental object in complex analysis, and integration is one of the fundamental ways we \emph{do} analysis.

\smallskip

Solving this particular problem is also important for the new connections it may grant between existing objects\textemdash let alone new, fruitful mathematics is could possibly create in the process. First, the Jump problem for quasicircles is (in a sense) incomplete without a proper definition, so we only have part of the picture. Historically, the Jump problem on well-behaved curves was used to solve the Dirichlet problem. Building the understanding to form this relation impacted many other important analytic objects\textemdash especially integral operators. These days, it is the Dirichlet problem (along with other techniques) that is used to understand the Jump formula on quasicircles. Going in the reverse direction would surely lead to a much better understanding of the tools involved, although the full extent of its impact is hard to predict.

As is often the case with complex analytic objects, if we can solve the complex version of a problem, it can help clarify our picture of the real version of the problem. The same is true here. There are two steps for using the real version of Jump problem to solve the Dirichlet (and Neumann) boundary value problems. First, you solve an integral equation on the boundary of the domain that involves principal value integral. The solution to the equation is a function. With this function you can use other integrals to obtain the solution. It is impossible to do the first step without first having a principal value integral, so there is no way to proceed in the case that the boundary is a quasicircle. 

\begin{appendices}
\chapter{The Jump Formula For Analytic Functions}
\label{Appendix 0}

The use of smooth contours and Hölder continuous functions to develop the Jump formula in Section \ref{theJumpFormula} was not by accident. These particular regularity conditions were crucial for that specific argument to work. It is quite natural to ask about how we may weaken these conditions to obtain Jump formulas for more general objects. Analysts do this sort of thing all the time; figuring out what essential properties are needed for a theorem to be true, and looking for counterexamples after removing too many conditions. Over time, we are able to collectively converge on what is the ``best" and ``correct" theorem. 

Achieving a more substantial result than the one we landed on would require far more advanced techniques, so we are not going to do exactly that here. Instead, we are going to look at how the argument changes if we simply \emph{alter} the assumptions. More specifically, we will require that the density function is analytic on the entire contour. Analyticity is so strong that we can essentially drop all but the most reasonable conditions for the contour. While not nearly as interesting of a result developed throughout chapter 1, the proof is simple, and the result of the Jump formula with these stronger conditions helps to motivate our investigation of the stronger theorems from chapter \ref{chapter1}.

\begin{theorem}[Jump Formula for Closed Rectifiable Curves]
Let $C$ be a rectifiable simple closed contour in the $\tau$-plane, and let $\varphi: C \to \mathbb{C}$ be a function that is analytic on $C$. If $\Phi^{\pm}: D^{\pm} \to \mathbb{C}$ is the Cauchy type integral over $C$ with density $\varphi$, then both $\Phi^+$ and $\Phi^-$ can be analytically continued to $C$, and the identity
\smallskip
\[
\varphi(\tau) = \Phi^+(\tau) - \Phi^-(\tau)
\]

\noindent
holds for every $\tau$ on $C$.
\end{theorem}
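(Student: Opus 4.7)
The plan is to exploit the fact that analyticity of $\varphi$ on the compact set $C$ automatically extends $\varphi$ to an open neighbourhood $U$ of $C$. Using $U$, I would deform $C$ slightly inward and outward to obtain two simple closed curves $C^+\subset D^+\cap U$ and $C^-\subset D^-\cap U$ such that the closed annular region $A$ bounded by $C^+$ (inner) and $C^-$ (outer) contains $C$ and is entirely contained in $U$. The idea is then to rewrite $\Phi^\pm$ near $C$ as integrals over the deformed contours; those new integrals are analytic in a region extending across $C$, which yields the desired analytic continuations, and the jump is read off by the residue theorem applied to the annulus $A$.

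First I would construct the auxiliary curves. For $C^+$, I would take a Riemann map $f\colon D^+\to\mathbb{D}$, which by Carath\'eodory's theorem (as already invoked in Lemma \ref{smooth homotopy}) extends to a homeomorphism $\overline{D^+}\to\overline{\mathbb{D}}$; the images $f^{-1}(\{|\zeta|=r\})$ are smooth (indeed analytic) closed curves in $D^+$ that converge to $C$ in Hausdorff distance as $r\to 1^-$, so for $r$ sufficiently close to $1$ they lie in $U$. An analogous construction on the Riemann sphere yields $C^-$ inside $D^-\cap U$. Shrinking $r$ further if necessary, I can arrange $C\subset A\subset\overline{A}\subset U$, so $\varphi$ is analytic on a neighbourhood of $\overline{A}$.

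Next, fix $z$ in the inner strip between $C^+$ and $C$. Then $z$ is not contained in the closed sub-annulus bounded by $C$ and $C^-$, which is contained in $U$, so $\tau\mapsto\varphi(\tau)/(\tau-z)$ is analytic there. Cauchy's theorem therefore gives
\[
\Phi^+(z)=\frac{1}{2\pi i}\int_C\frac{\varphi(\tau)}{\tau-z}\,d\tau=\frac{1}{2\pi i}\int_{C^-}\frac{\varphi(\tau)}{\tau-z}\,d\tau\;=:\;\Psi^+(z).
\]
By Theorem \ref{CTI is analytic}, $\Psi^+$ is analytic on the entire interior of $C^-$, a region which contains $C$ together with a one-sided strip outside it. Hence $\Psi^+$ provides an analytic continuation of $\Phi^+$ across $C$. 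Running the identical argument with the roles of $C^+$ and $C^-$ exchanged (now for $z$ in the outer strip between $C$ and $C^-$) produces $\Psi^-(z)=\frac{1}{2\pi i}\int_{C^+}\varphi(\tau)/(\tau-z)\,d\tau$, which is analytic for every $z$ outside $C^+$ and agrees with $\Phi^-$ on a one-sided neighbourhood of $C$ in $D^-$.

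Finally, for $\tau_0\in C$, I would compute the jump by subtracting the two representations:
\[
\Psi^+(\tau_0)-\Psi^-(\tau_0)=\frac{1}{2\pi i}\left[\int_{C^-}\frac{\varphi(\tau)}{\tau-\tau_0}\,d\tau-\int_{C^+}\frac{\varphi(\tau)}{\tau-\tau_0}\,d\tau\right].
\]
The right-hand side is the integral over the oriented boundary of the annulus $A$, and $\tau_0$ is the unique singularity of the integrand inside $A$; since $\varphi$ is analytic on a neighbourhood of $\overline{A}$, the residue theorem gives $\varphi(\tau_0)$, proving the identity. The only real obstacle is setting up the deformed contours $C^\pm$ and justifying Cauchy's and the residue theorem for the annular region with a merely rectifiable inner edge $C$; both are handled by Carath\'eodory's theorem together with the standard rectifiable form of Cauchy's theorem, after which the remainder of the proof is essentially mechanical.
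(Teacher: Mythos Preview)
Your proposal is correct and follows essentially the same route as the paper: extend $\varphi$ to a neighbourhood of $C$, pick auxiliary curves $C^{\pm}$ on either side, deform the Cauchy integral to the opposite curve to obtain the analytic continuations of $\Phi^{\pm}$ across $C$, and read off the jump at $\tau_0$ as the integral over the boundary of the annulus, which the residue theorem (equivalently, the Cauchy integral formula applied to the cut annulus, as the paper phrases it) evaluates to $\varphi(\tau_0)$. The only notable difference is that you invoke the Riemann map and Carath\'eodory to manufacture $C^{\pm}$, which is more machinery than needed here---the paper simply asserts the existence of such curves inside the open neighbourhood of analyticity---and once $C^{\pm}$ are chosen smooth, the final residue computation involves only $C^{\pm}$, so the rectifiability of $C$ enters solely in the deformation step (which, as the paper notes, is covered by the standard theory for rectifiable curves).
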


\begin{proof}
As $\varphi$ is analytic on $C$, in particular it is analytic on some open set containing $C$, which in general forms a doubly-connected region. Within this region, we define two new rectifiable simple closed contour $C^+$ and $C^-$ that lie within $D^+$ and $D^-$, respectively. Let $\tau_0$ be a point on $C$. For $z \in D^{+}$, by the principle of deformation of paths we have
\smallskip
\[
\Phi^+(z) = \frac{1}{2\pi i}\int_{C^-} \frac{\varphi(\tau)}{\tau - z} \,d\tau
\]

The right hand side is analytic on the interior of $C^-$, and equals $\Phi^+$ when restricted to $D^+$. Therefore, $\Phi^+$ is analytic on $D^+ \cup C$. Since  $\tau_0$ lies in this region, it follows that $\Phi^+$ is continuous at $\tau_0$ on approach within $D^+$. Similarly, we can show $\Phi^-$ is analytic on $D^- \cup C$ using $C^+$, so it too is continuous at $\tau_0$, this time on approach from within $D^-$.

Next, connect $C^+$ and $C^-$ with a straight line segment $L$ that does not pass through $\tau_0$. We are going to integrate starting at $L$, clockwise around $C^+$, up $L$ to $C^-$, counterclockwise around $C^-$ and then back down $L$. Integrating over $L$ both ways contributes nothing to the integral, and creates an integration path that completely surrounds $\tau_0$. Letting $\gamma \coloneqq -C^+ + L + C^- - L$ and applying the Cauchy integral formula, we compute:
\smallskip
\begin{align*}
\lim_{\substack{z \to \tau_0 \\ z \in D^+}} \Phi^+(z) - \lim_{\substack{z \to \tau_0 \\ z \in D^-}} \Phi^-(z) &= \Phi^+(\tau_0)-\Phi^-(\tau_0) \\ &= \frac{1}{2\pi i}\int_{\gamma} \frac{\varphi(\tau)}{\tau - \tau_0} \,d\tau \\
&= \varphi(\tau_0).
\end{align*}
\end{proof}

\begin{remark}
The reader may wish to consult Apostol's book \cite{apostol1957mathematical} to verify that the basic contour integral properties used in this proof still hold true when dealing with rectifiable curves (rather than smooth contours). 
\end{remark}

\chapter{Jordan Curves and Open Sets in S\textsuperscript{1}}
\label{Appendix 1}

Here we present a proof that for every point on a smooth contour, we can find a neighbourhood so small as to only contain a single connected component, or ``arc", of that contour. We mentioned this property in the remark following the definition of the singular value integral in Section \ref{Principal Value Integral}, and it is a highly desirable property to accompany the integral. In fact, this property holds for a far more general class of curves in the plane, known as \emph{Jordan curves}. This is an instances where a generalization makes the argument far cleaner and easier to understand. We just need the right definition. 

This appendix requires some basic elements of point-set topology, which are not assumed prerequisites of the main article. For valuable references, see \cite{kalajdzievski2015illustrated}, \cite{munkrestopology}, and \cite{newman1939elements}. Taking this detour is not strictly necessary to understand the principal value integral and Jump problem, and can be skipped at the discretion of the reader. However, it has been included here in this appendix for completeness, and relies on some concepts that are important for readers who continue to study this field.

We start by recalling the famous representation theorem for open subsets of the real line.

\begin{theorem}[Representation of Open Sets in $\mathbb{R}$]
\label{Characterization of open sets in R}
Every non-empty open set in $\mathbb{R}$ can be written uniquely as the union of countably many disjoint connected components. 
\end{theorem}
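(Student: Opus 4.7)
The plan is to construct, for each point of the open set $U$, the largest open interval containing it that still lies in $U$, and then show that these maximal intervals form the desired decomposition. Concretely, for each $x \in U$, I would set
\[
a_x = \inf\{\,a \in [-\infty,x] : (a,x] \subseteq U\,\} \quad\text{and}\quad b_x = \sup\{\,b \in [x,+\infty] : [x,b) \subseteq U\,\},
\]
and define $I_x = (a_x,b_x)$. The first step is to verify that $I_x \subseteq U$, that $x \in I_x$, and that $I_x$ is in fact the union of all open intervals that contain $x$ and lie in $U$; openness of $U$ guarantees at least one such interval exists, so the suprema and infima are well-defined in $[-\infty,+\infty]$ and $a_x < b_x$.

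Next I would show that the family $\{I_x : x \in U\}$ is pairwise either identical or disjoint. The key observation is that if $I_x \cap I_y \neq \emptyset$, then $I_x \cup I_y$ is an open interval contained in $U$ containing both $x$ and $y$, so by maximality of each of $I_x$ and $I_y$ we must have $I_x = I_x \cup I_y = I_y$. Since $x \in I_x$ for every $x$, it follows that $U = \bigsqcup_{x \in U} I_x$, where the union is taken over one representative from each equivalence class.

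For countability, I would invoke the density of $\mathbb{Q}$ in $\mathbb{R}$: each of the distinct open intervals in the decomposition contains at least one rational number, and distinct intervals contain disjoint sets of rationals, so the collection of distinct $I_x$'s injects into $\mathbb{Q}$ and is therefore at most countable. Combined with $U \neq \emptyset$, we get a countable (finite or countably infinite) disjoint union.

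For uniqueness, suppose $U = \bigsqcup_{k} J_k$ is any representation as a disjoint union of open intervals. Given $x \in U$, let $J_k$ be the unique interval containing $x$. Then $J_k$ is an open interval containing $x$ and lying in $U$, so $J_k \subseteq I_x$ by maximality; conversely, if $I_x$ strictly contained $J_k$, then $I_x$ would have to meet some other $J_{k'}$, contradicting disjointness of the $J_k$'s (since $I_x$ is connected and any point of $I_x \setminus J_k$ would place $I_x$ into the union of two or more disjoint open sets). Hence $I_x = J_k$, so the two decompositions coincide. The main obstacle I anticipate is carefully handling the endpoints $\pm\infty$ and justifying the maximality argument used in both the well-definedness of the $I_x$ and the uniqueness step; the countability and disjointness steps are essentially routine once maximality is in hand.
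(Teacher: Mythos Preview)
Your proposal is correct and is the classical construction. The paper does not actually prove this theorem; it simply cites Apostol's \emph{Mathematical Analysis} for a proof, and your argument---building the maximal open interval $I_x$ about each point, showing two such intervals are either identical or disjoint, counting them via the density of $\mathbb{Q}$, and deducing uniqueness from connectedness---is exactly the standard proof one finds there.
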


\noindent
A proof can be found in Apostol's Mathematical Analysis \cite{apostol1957mathematical}. It turns out the same type of representation is true for open sets in the unit circle $S^1$. From here on, assume that subsets of $\mathbb{R}$ and $\mathbb{C}$ are automatically equipped with the subspace topology. It is easy to see that the basic open sets of $S^1$ are of the form $\{ e^{it} : a<t<b \}$, where $a$ and $b$ are real numbers. We call sets of this form \emph{open intervals} of $S^1$, the same way we do for basic open sets in $\mathbb{R}$.

\begin{theorem}[Representation of Open Sets in $S^1$]
\label{Characterization of open sets in s1}
Every open set in $S^1$ can be written uniquely as the union of countably-many pairwise disjoint connected components in $S^1$.
\end{theorem}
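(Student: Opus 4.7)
The plan is to reduce the circle case to the real-line case by removing a single point. First I would dispose of the trivial possibility $U = S^{1}$, in which case $S^{1}$ itself is the lone connected component (since $S^{1}$ is path-connected) and uniqueness is immediate. So from here on assume $U \subsetneq S^{1}$, and fix a point $p \in S^{1} \setminus U$.

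The key observation is that the punctured circle $S^{1} \setminus \{p\}$ is homeomorphic to $\mathbb{R}$; explicitly, if $p = e^{i\theta_{0}}$ then the map $\phi : S^{1} \setminus \{p\} \to \mathbb{R}$ sending $e^{i(\theta_{0} + t)}$ with $t \in (0, 2\pi)$ to (say) $\cot(t/2)$ is a homeomorphism onto $\mathbb{R}$. Since $U \subseteq S^{1} \setminus \{p\}$, its image $V := \phi(U)$ is an open subset of $\mathbb{R}$. By Theorem \ref{Characterization of open sets in R}, $V$ decomposes uniquely as a countable disjoint union $V = \bigsqcup_{n} I_{n}$ of open intervals in $\mathbb{R}$, each of which is a connected component of $V$.

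Applying $\phi^{-1}$ then gives $U = \bigsqcup_{n} \phi^{-1}(I_{n})$, a countable disjoint union of sets that are open in $S^{1} \setminus \{p\}$, hence open in $S^{1}$, and connected because $\phi$ is a homeomorphism and each $I_{n}$ is connected. To conclude these are precisely the connected components of $U$ as a subspace of $S^{1}$, I would argue that any larger connected subset of $U$ containing some $\phi^{-1}(I_{n})$ would, under $\phi$, yield a connected subset of $V$ strictly containing $I_{n}$, contradicting the maximality of $I_{n}$ as a component of $V$.

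For uniqueness, the cleanest route is to appeal to the general topological fact that the decomposition of any space into its connected components is unique (the components form a partition determined by the equivalence relation ``lies in a common connected subset''). Thus any decomposition of $U$ into pairwise disjoint connected open pieces must coincide with the one produced above. The main subtlety, and the step requiring the most care, is verifying that the connectedness/maximality transfers correctly across the homeomorphism; this is essentially routine, but it is where the choice of $p \notin U$ becomes essential, since without it we could not realise $U$ inside a copy of $\mathbb{R}$ and would have no access to Theorem \ref{Characterization of open sets in R}.
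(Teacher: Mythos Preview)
Your proposal is correct and follows essentially the same strategy as the paper: handle $U=S^{1}$ separately, pick a point $p\notin U$, identify $S^{1}\setminus\{p\}$ with a copy of $\mathbb{R}$, and invoke Theorem~\ref{Characterization of open sets in R} on the image of $U$. The paper uses the restriction of $t\mapsto e^{it}$ to $(t_{0},t_{0}+2\pi)$ rather than your cotangent map, but this is purely cosmetic; if anything, your version is slightly more careful in verifying that the pulled-back intervals really are the connected components of $U$ and in spelling out uniqueness.
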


\begin{proof}
Let $U$ be an open subset of $S^1$. If $U$ is the entire space, there is nothing to prove, so let us assume henceforth that $U \neq S^1$. Define a function $f:\mathbb{R} \to S^1$ by
\smallskip
\[
f(t) = e^{it}
\]

\noindent
and choose a point $t_0$ such that $f(t_0)=e^{it_0}$ is not in $U$. It is clear that the restriction $f\vert_{(t_0,t_0 + 2\pi)}$ is a bijection. By continuity of $f$, the set $f^{-1}(U) \cap (t_0,t_0 + 2\pi)$ is open in $\mathbb{R}$, so by Theorem \ref{Characterization of open sets in R}, we can uniquely write
\smallskip
\[
f^{-1}(U) \cap (t_0, t_0 + 2\pi) = \bigcup_{n \in \mathbb{N}} I_n
\]

\noindent
where the $I_n$'s are pairwise disjoint connected components in $\mathbb{R}$. Furthermore, each $I_n$ is the preimage of an open interval in $S^1$ under $f$, and each open interval in $U$ will have a distinct preimage in $(t_0, t_0 + 2\pi)$ under $f^{-1}$. What we have done is create a one-to-one correspondence between the connected components in $\mathbb{R}$ and $S^1$, and the result follows.
\end{proof}

As a point of comparison, let us recall the usual definition of simple closed contour (as found in \cite{brown2009complex}, for example).

\begin{definition}[Simple Closed Contour]
\label{Simple Closed Contour}
A simple closed contour $C$ is the image of a continuous function $\alpha: [a,b] \to \mathbb{C}$ that is injective on $(a,b)$, and $\alpha(a) = \alpha(b)$. 
\end{definition}

Next, we give the topological definition of a simple closed contour in $\mathbb{C}$ (similar to the one found in \cite{newman1939elements}).

\begin{definition}[Jordan Curve]
\label{Jordan Curve}
A Jordan curve is the image of a continuous map $\alpha: S^1 \to \mathbb{C}$ that is a homeomorphism onto its image.
\end{definition}

Where smooth contours are nice and relatively easy to work with, Jordan curves are the absolute worst class of plane curves, even more so than the quasicircle (see Section \ref{quasicircle}). For example, in Pommerenke's book on Conformal maps \cite{pommerenke2013boundary}, some properties that characterize quasicircles are explored, and one of these properties is that they cannot have cusps. Jordan curves certainly can, which introduces far more complexity to their analysis.

With that being said, the result we are after is truly topological in nature, so it should be expected that the argument will be more natural using the Jordan curve. Technically, we do need to show that this topological definition of a Jordan curve lines up with the typical analytic definition of simple closed curve. We summarize this with the following proposition.

\begin{proposition}
The definitions of Jordan curve and simple closed curve are equivalent.
\end{proposition}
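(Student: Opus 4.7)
The plan is to prove the two implications separately, with the main technical device being the standard fact that $S^1$ is homeomorphic to a closed interval with its endpoints identified, together with the ``closed map lemma'': a continuous bijection from a compact space onto a Hausdorff space is automatically a homeomorphism.

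For the direction ``simple closed contour $\Rightarrow$ Jordan curve,'' I would start with a parameterization $\alpha:[a,b]\to\mathbb{C}$ that is continuous, injective on $(a,b)$, and satisfies $\alpha(a)=\alpha(b)$. First I would introduce the equivalence relation on $[a,b]$ that identifies $a$ with $b$ and leaves every other point alone, then quote (or briefly justify) the fact that the quotient $[a,b]/{\sim}$ is homeomorphic to $S^1$ via the map induced by $t\mapsto \exp\!\bigl(2\pi i (t-a)/(b-a)\bigr)$. The endpoint condition $\alpha(a)=\alpha(b)$ is exactly what allows $\alpha$ to descend, through the universal property of the quotient topology, to a continuous map $\tilde\alpha:[a,b]/{\sim}\;\to\mathbb{C}$. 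Injectivity of $\alpha$ on $(a,b)$ together with the identification $a\sim b$ makes $\tilde\alpha$ injective on all of $[a,b]/{\sim}$, hence a continuous bijection onto its image $C$. Since $[a,b]/{\sim}$ is compact (as a continuous image of $[a,b]$) and $C\subset\mathbb{C}$ is Hausdorff, the closed map lemma promotes $\tilde\alpha$ to a homeomorphism. Composing with the homeomorphism $S^1\to[a,b]/{\sim}$ then presents $C$ as the homeomorphic image of $S^1$, which is Definition \ref{Jordan Curve}.

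For the converse direction, I would take a Jordan curve $\alpha:S^1\to C$ and define $\beta:[0,2\pi]\to\mathbb{C}$ by $\beta(t)=\alpha(e^{it})$. Continuity is immediate from the composition of continuous maps. The equality $\beta(0)=\beta(2\pi)=\alpha(1)$ holds by construction. Finally, injectivity of $\beta$ on $(0,2\pi)$ follows because $t\mapsto e^{it}$ is injective from $(0,2\pi)$ into $S^1\setminus\{1\}$ and $\alpha$ is globally injective. This matches Definition \ref{Simple Closed Contour} verbatim.

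The main obstacle is the forward direction, and specifically verifying that $\tilde\alpha$ is a homeomorphism rather than merely a continuous bijection. A priori the inverse of a continuous bijection need not be continuous, so without the compactness-Hausdorff argument one could miss the fact that pathologies can occur at the identified basepoint. The other small technical point worth being careful about is confirming that $\tilde\alpha$ is genuinely well-defined and continuous with respect to the quotient topology, which is a routine application of the universal property but must be stated if the reader is not assumed to be fluent in quotient constructions.
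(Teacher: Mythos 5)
Your proof is correct and follows essentially the same route as the paper: the easy direction precomposes with the standard parameterization of $S^1$, and the harder direction factors the parameterization through the quotient $[a,b]/{\sim}$ and invokes compactness of the domain plus Hausdorffness of the target to upgrade the continuous bijection to a homeomorphism, which is exactly the closed-map argument the paper cites. The only superficial difference is that you spell out the universal property of the quotient topology and the closed map lemma in full, whereas the paper outsources those steps to page references in Kalajdzievski.
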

\begin{proof}
Let $f: [0,1] \to S^1$ denote the following parameterization of the unit circle:
\smallskip
\[
f(t) = e^{i2\pi t}.
\]

First, suppose $J$ is a Jordan curve and $\alpha: S^1 \to J$ a homeomorphism. Then $\alpha \circ f: [0,1] \to J$ is a continuous parameterization of $J$ that is injective except at its endpoints, so $J$ is a simple closed contour.

Now suppose that $C$ be a simple closed contour, and $\beta: [0,1] \to C$ is a parameterization of $C$. For any closed subset of $[0,1]$, its image under $\beta$ will be compact, and hence closed since $C$ is a Hausdorff space (Page 161 in \cite{kalajdzievski2015illustrated}), so $\beta$ is a closed map. Define an equivalence relation $\sim_{\beta}$ on $[0,1]$ by $a\sim_{\beta} b$ if and only if $\beta (a)=\beta (b)$. It follows that $[0,1]/\!\sim_{\beta}$ is homeomorphic to $C$ (Page 77 in \cite{kalajdzievski2015illustrated}). We can use the same argument with the map $f$ to show that $[0,1]/\!\sim_{f}$ is homeomorphic to $S^1$. Therefore, $C$ is a a Jordan curve.
\end{proof}

\begin{corollary}
Every closed smooth contour is a Jordan curve.
\end{corollary}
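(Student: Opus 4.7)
The plan is to reduce the statement immediately to the preceding proposition. Recall that, per the Notation section, a smooth contour is by definition either a simple contour or a simple closed contour (satisfying, additionally, the $\mathcal{C}^1$ and non-vanishing tangent conditions). Thus the phrase ``closed smooth contour'' already unpacks as: a simple closed contour that happens to be $\mathcal{C}^1$ with nowhere vanishing tangent. In particular, every closed smooth contour is a simple closed contour in the sense of Definition \ref{Simple Closed Contour}.

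The only substantive work was done in the preceding proposition, which established the equivalence of Definition \ref{Simple Closed Contour} and Definition \ref{Jordan Curve}. So the proof is essentially a one-liner: apply the ``simple closed contour implies Jordan curve'' direction of that equivalence to the contour in question. The $\mathcal{C}^1$ and non-vanishing tangent hypotheses are not needed for the conclusion; they are simply inherited data that we discard.

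There is no real obstacle to overcome here, since the corollary is a direct specialization. The only subtlety worth flagging in the write-up is making explicit that ``closed'' in the phrase ``closed smooth contour'' refers to the first clause of the smoothness definition (the simple-closed-contour alternative), so that the hypothesis of the preceding proposition is genuinely met; after that, the conclusion is immediate.
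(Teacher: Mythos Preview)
Your proposal is correct and matches the paper's approach exactly: the paper states this corollary without proof, leaving it as an immediate consequence of the preceding proposition (the equivalence of simple closed contours and Jordan curves), which is precisely the reduction you describe. Your observation that the $\mathcal{C}^1$ and non-vanishing-tangent data are superfluous here is apt and worth keeping.
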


In the case that a smooth contour is open, we can simply attach another simple contour to its endpoints in a way that doesn't intersect the original curve anywhere else to create a Jordan curve. We are now equipped to prove the main result of this section.

\begin{theorem}
Let $\Gamma$ be a Jordan curve in the complex plane. For every $\tau_0$ on $\Gamma$, there is an open neighbourhood of $\tau$ that contains exactly one connected component of $\Gamma$.
\end{theorem}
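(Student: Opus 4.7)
The plan is to exploit the homeomorphism $\alpha\colon S^1 \to \Gamma$ supplied by Definition \ref{Jordan Curve} and to manufacture the desired neighbourhood of $\tau_0$ from the inside out, rather than by shrinking a Euclidean ball around $\tau_0$. Set $s_0 := \alpha^{-1}(\tau_0) \in S^1$.

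First I would pick any proper open arc $A \subset S^1$ containing $s_0$\textemdash for concreteness, the arc of arc-length $\pi$ centred at $s_0$. Since $A$ is connected and $\alpha$ is continuous, $\alpha(A)$ is a connected subset of $\Gamma$ containing $\tau_0$. The complement $S^1 \setminus A$ is closed in the compact space $S^1$, hence compact, so its image $K := \alpha(S^1 \setminus A)$ is a compact subset of $\mathbb{C}$. Injectivity of $\alpha$ guarantees that $\tau_0 \notin K$, so the set $U := \mathbb{C} \setminus K$ is open in $\mathbb{C}$ and contains $\tau_0$.

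Then I would finish by directly computing $U \cap \Gamma$. Since $\Gamma$ decomposes as the disjoint union $\alpha(A) \sqcup K$, one has
\[
U \cap \Gamma = (\mathbb{C} \setminus K) \cap \bigl(\alpha(A) \cup K\bigr) = \alpha(A),
\]
so $U$ is an open neighbourhood of $\tau_0$ whose intersection with $\Gamma$ is precisely the connected set $\alpha(A)$, which gives exactly one connected component as required.

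The main conceptual point, which is what passes for the ``hard part'' here, is recognising that the neighbourhood may have to be genuinely non-circular. A typical Jordan curve can oscillate enough to re-enter every Euclidean ball centred at $\tau_0$, so no literal open ball need work; one must instead tailor $U$ to the curve by excising the compact ``rest of $\Gamma$.'' This is exactly the content that Theorem \ref{Characterization of open sets in s1} and the subspace-topology definition of a Jordan curve are designed to make painless, and it is why replacing the analytic notion of simple closed contour by the topological notion of Jordan curve actually simplifies the argument rather than complicating it.
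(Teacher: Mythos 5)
Your argument is correct, and it takes a genuinely different (and more elementary) route than the paper's. The paper works from the $\mathbb{C}$ side inward: it intersects $\Gamma$ with a small ball $B(\tau_0,r)$, pulls this back to an open subset of $S^1$, invokes Theorem \ref{Characterization of open sets in s1} to split the preimage into countably many disjoint open arcs, pushes the arc containing $f^{-1}(\tau_0)$ forward to an open set $V$ with $\Gamma\cap V=f(I_1)$, and finally shrinks to a ball $B(\tau_0,\delta)\subset V$. You work from the $S^1$ side outward: choose a single proper open arc $A$ through $\alpha^{-1}(\tau_0)$ and take $U=\mathbb{C}\setminus\alpha(S^1\setminus A)$, so that $U\cap\Gamma=\alpha(A)$ on the nose. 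This sidesteps the representation theorem for open sets in $S^1$ entirely (you cite it in your closing paragraph, but your proof never actually uses it) and makes the connectedness of the intersection a one-line consequence of continuity. What the paper's route buys is a metric \emph{ball}, which is what the principal-value definition in Section \ref{Principal Value Integral} actually deletes; what your route buys is a clean proof of the theorem exactly as stated. Your closing observation is sharper than you may realize: for a general Jordan curve no ball centred at $\tau_0$ need meet $\Gamma$ in a connected set (comb-like curves re-enter every ball infinitely often), so the ball form of the statement genuinely requires the smoothness hypotheses of the main text, and the last step of the paper's own proof --- passing from $B(\tau_0,\delta)\subset V$ to the conclusion that $\Gamma\cap B(\tau_0,\delta)$ is a single component, rather than merely a possibly disconnected subset of the single arc $f(I_1)$ --- quietly elides exactly the difficulty you point out.
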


\begin{proof}
Let $f: S^1 \to \Gamma$ be a homeomorphism. Choose $r>0$ small enough so that the open ball $B(\tau_0,r)$ excludes at least one point of $\Gamma$. Consider the open set $f^{-1}(B(\tau_0,r)\cap \Gamma)$ in $S^1$. By Theorem \ref{Characterization of open sets in s1}, we can write 
\smallskip
\[
f^{-1}(B(\tau,r)\cap \Gamma) = \bigcup_{n\in\mathbb{N}} I_n
\]

\noindent
where the $I_n$'s are pairwise disjoint connected components of $S^1$. Let $t_0 = f^{-1}(\tau_0)$. If necessary, relabel the intervals so that $I_1$ is the unique component containing $t_0$. Since $f$ is a homeomorphism, it follows that $f(I_n)$ is an open connected component of $\Gamma$ for every $n\in\mathbb{N}$, and the $f(I_n)$'s are pairwise-disjoint. In particular, we have that there is an open set $V\subset\mathbb{C}$ such that $f(I_1) = \Gamma\cap V$, and since $f(I_i) \cap f(I_j) = \varnothing$ whenever $i \neq j$, it follows that $f(I_n) \cap V = \varnothing$ for every $n>1$. Choosing $0<\delta<r$ small enough so that $B(\tau_0, \delta) \subset V$, we have that $\Gamma \cap B(\tau_0,\delta)$ contains exactly one connected component, which completes the proof.
\end{proof}
\chapter{Existence of Faber Series}
\label{Appendix 2}

In Section \ref{FaberPolynomialDiscussion}, we introduced the idea of a type of power series that is fit for a specific domain, in the sense that we can find a set of polynomials that capture the ``shape" of a domain, and form a sort of ``basis" for series representation of analytic functions that will converge everywhere on that domain (rather than simply on open disks). Following a paper by H. Tietz \cite{tietz1957faber}, we are going to apply the Jump formula in an effort prove the existence of such polynomials for a special class of domain, and in turn a series representation (called a \emph{Faber series}) for analytic functions within said domain.

The theorem presented here is both important to the area of research discussed in chapter 2, and a very nice and relatively simple application of far more complicated ideas that are none-the-less essential for the reader pursuing further study in the field. Unlike the main text, this appendix will not develop this theory. Rather, we point the reader towards references that fill in the gaps. At the very least, the reader will benefit from familiarizing themselves with the contents of Lang's book \cite{lang2013complex}, specifically the second chapter on formal power series.

\smallskip

To begin, let us introduce some essential terminology.

\begin{definition} [Conformal Map]
Let $D$ be a domain in the complex plane. We say that a function $f: D \to \mathbb{C}$ is a conformal map if it is both analytic and injective.
\end{definition}

\begin{definition} [Pole at Infinity]
For an analytic function $f$, we say $f$ has a pole of order n at infinity if $f(\frac{1}{z})$ has a pole of order n at 0.
\end{definition}

The following characterizes entire functions with a pole at infinity. The proof is short and is left to the reader to verify.

\begin{proposition}
\label{pole at infinity iff polynomial}
An entire function $p(z)$ has pole of order $n$ at infinity if and only if $p(z)$ is a polynomial of degree n.
\end{proposition}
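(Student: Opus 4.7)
The plan is to pass between the definition of a pole at infinity and the uniqueness of Laurent expansions, which is the natural tool here. For the forward direction, suppose $p$ is entire with a pole of order $n$ at $\infty$. By the definition just given, $p(1/z)$ has a pole of order $n$ at $z=0$, so there is a punctured disk $0<|z|<r$ on which it admits a Laurent expansion
\[
p\!\left(\tfrac{1}{z}\right) \;=\; \sum_{k=-n}^{\infty} a_k\, z^k, \qquad a_{-n}\neq 0.
\]
Setting $w=1/z$ and rewriting, I obtain a Laurent representation of $p(w)$ valid in the annulus $|w|>1/r$:
\[
p(w) \;=\; \sum_{k=-n}^{\infty} a_k\, w^{-k} \;=\; a_{-n}w^n + a_{-n+1}w^{n-1} + \cdots + a_0 + a_1 w^{-1} + a_2 w^{-2} + \cdots.
\]

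Next I would bring in entireness of $p$. Since $p$ is entire, its Taylor series $p(w)=\sum_{j=0}^{\infty} b_j w^j$ converges in all of $\mathbb{C}$ and in particular is a valid Laurent expansion of $p$ in the annulus $|w|>1/r$. By the uniqueness of Laurent coefficients on that annulus, the two expansions must agree term-by-term. Matching coefficients forces $a_k=0$ for every $k\geq 1$ (to kill the negative powers of $w$) and $b_j=a_{-j}$ for $0\leq j\leq n$, with $b_j=0$ for $j>n$. Hence
\[
p(w) \;=\; a_{-n} w^n + a_{-n+1}w^{n-1} + \cdots + a_0,
\]
a polynomial of degree exactly $n$ since $a_{-n}\neq 0$.

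For the converse, I would just compute directly. If $p(z)=c_n z^n+c_{n-1}z^{n-1}+\cdots+c_0$ with $c_n\neq 0$, then
\[
p\!\left(\tfrac{1}{z}\right) \;=\; \frac{c_n}{z^n} + \frac{c_{n-1}}{z^{n-1}} + \cdots + c_0,
\]
which is manifestly a Laurent expansion about $z=0$ whose most negative term is $c_n z^{-n}$ with $c_n\neq 0$; hence $p(1/z)$ has a pole of order $n$ at the origin, and $p$ has a pole of order $n$ at infinity by definition.

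There is no real obstacle here — the only point that requires any care is the appeal to uniqueness of Laurent series on the common annulus $|w|>1/r$, which needs both expansions to be Laurent expansions about the same center and on the same annulus. Since the Taylor series of an entire function converges on all of $\mathbb{C}$, it is in particular a Laurent series on any such annulus, so uniqueness applies and the argument closes cleanly.
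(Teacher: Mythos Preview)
Your proof is correct. The paper does not actually give a proof of this proposition --- it explicitly states ``The proof is short and is left to the reader to verify'' --- so there is nothing to compare against. Your argument via uniqueness of Laurent expansions on the common annulus $|w|>1/r$ is a standard and clean way to do it, and your care about why the entire Taylor series qualifies as a Laurent expansion on that annulus is exactly the point one should check.
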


We are only going to look at domains whose boundaries are especially regular. This new class of contour will do just that.

\begin{definition} [Analytic Jordan Curve]
A simple closed contour $\gamma$ is said to be analytic if there exists $r>1$ and a conformal map $\alpha$ on the annulus $\{ z: \frac{1}{r} < |z| < r\}$ such that $\alpha\vert_{|z|=1} = \gamma$.
\end{definition}

For a domain $D$ bounded by an analytic Jordan curve, we say that $D$ is an \emph{analytic Jordan domain}.

\smallskip

The main tool used in the proof is the \emph{Riemann Mapping Theorem}. This theorem is a cornerstone of geometric function theory. We simply state it for reference, as the proof is not easy and can be found in many places (see \cite{lang2013complex} for example).

\begin{theorem}[Riemann Mapping Theorem]
\label{Riemann mapping theorem}
If $U$ is a simply connected subset of the complex plane that is not the entire plane, then there exists a conformal map $f: U \to \mathbb{D}$.
\end{theorem}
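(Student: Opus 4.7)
The plan is to use the classical normal-families approach due to Koebe. After fixing a basepoint $z_0 \in U$, I would consider the family
\[
\mathcal{F} = \{ f : U \to \mathbb{D} : f \text{ holomorphic and injective}, \; f(z_0) = 0, \; f'(z_0) > 0 \}
\]
and proceed in three steps: (i) show that $\mathcal{F}$ is nonempty, (ii) extract an extremal element $f^{*}$ that maximises $f'(z_0)$, and (iii) show that this extremal element is necessarily surjective onto $\mathbb{D}$.

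For step (i), I would use the hypothesis $U \neq \mathbb{C}$ to pick a point $a \notin U$. Simple connectedness of $U$ allows me to define a single-valued holomorphic branch $\varphi(z)$ of $\sqrt{z-a}$ on $U$. This $\varphi$ is injective, and crucially its image omits an entire open disk: if $w \in \varphi(U)$ then $-w \notin \varphi(U)$, so some neighbourhood of $-\varphi(z_0)$ lies in the complement. Composing $\varphi$ with a suitable Möbius transformation sending that omitted disk's complement into $\mathbb{D}$, and then with an automorphism of $\mathbb{D}$ to normalise at $z_0$, produces a member of $\mathcal{F}$.

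For step (ii), members of $\mathcal{F}$ are uniformly bounded by $1$, so Montel's theorem gives that $\mathcal{F}$ is a normal family. By Cauchy's estimate on a small disk about $z_0$ the quantity $M \coloneqq \sup_{f \in \mathcal{F}} f'(z_0)$ is finite, and extracting a locally uniformly convergent subsequence from a maximising sequence yields a holomorphic limit $f^{*}$ with $(f^{*})'(z_0) = M > 0$. Hurwitz's theorem then forces $f^{*}$ to be injective, since a locally uniform limit of injective holomorphic functions is either injective or constant, and $f^{*}$ is non-constant. The maximum principle confirms $f^{*}(U) \subset \mathbb{D}$, so $f^{*} \in \mathcal{F}$.

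Step (iii) is the main obstacle, and it is resolved by a clever Blaschke-product trick. Suppose $f^{*}$ is not surjective and pick $w_0 \in \mathbb{D} \setminus f^{*}(U)$. Using simple connectedness of $f^{*}(U)$ (inherited from $U$) I can define a single-valued branch
\[
g(z) = \sqrt{\frac{f^{*}(z) - w_0}{1 - \overline{w_0} f^{*}(z)}},
\]
which is an injective map of $U$ into $\mathbb{D}$. Post-composing with an automorphism of $\mathbb{D}$ carrying $g(z_0)$ back to $0$ and rotating to make the derivative positive produces an $h \in \mathcal{F}$. A direct computation (comparing the chain-rule derivative of $h$ at $z_0$ with $(f^{*})'(z_0)$) shows
\[
h'(z_0) = \frac{1 + |w_0|}{2\sqrt{|w_0|}}\,(f^{*})'(z_0) > (f^{*})'(z_0) = M,
\]
because the squaring map from $\mathbb{D}$ to $\mathbb{D}$ is not an automorphism (the factor $(1+|w_0|)/(2\sqrt{|w_0|}) > 1$ whenever $0 < |w_0| < 1$). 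This contradicts maximality, so $f^{*}$ must map onto $\mathbb{D}$. The delicate point throughout is the invocation of single-valued holomorphic branches of $\sqrt{\cdot}$ on $U$ and on $f^{*}(U)$, both of which rest essentially on simple connectedness; absent this hypothesis the entire scheme collapses.
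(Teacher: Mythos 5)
Your proposal is the classical Koebe normal-families proof, and it is correct in all essentials: the square-root trick off an omitted point gives nonemptiness of $\mathcal{F}$, Montel plus Hurwitz yields an injective extremal element maximising $f'(z_0)$, and the Blaschke--square-root construction with the factor $\frac{1+|w_0|}{2\sqrt{|w_0|}}>1$ correctly rules out non-surjectivity (note $w_0\neq 0$ since $f^{*}(z_0)=0$). The paper itself deliberately omits a proof of this theorem and defers to Lang's book, where the argument given is exactly the one you describe, so there is nothing to compare beyond saying your sketch faithfully reproduces the standard proof.
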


It can also be shown that the function $f$ in the Riemann mapping theorem has a conformal inverse. We can now state and prove the desired theorem regarding Faber series. 

\begin{theorem}[Existence of Faber Series on Analytic Jordan Domains]
Let $C$ be an analytic Jordan curve. For any function $f$ that is analytic on $D^+ \cup C$, it can be represented in $D^+$ by a convergent series of the form
\smallskip
\[
f(z) = \sum_{n=0}^{\infty} a_n \Psi_n (z)
\]

\noindent
where $\Psi_n$ is a degree-n polynomial known as the n-th Faber polynomial of $D^+$.
\end{theorem}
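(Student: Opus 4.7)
The plan is to build the Faber polynomials explicitly via a generating function associated with the Riemann map of $D^-$, and then use the Cauchy integral formula (itself a special case of the Jump formula for the analytic density $f$) together with this generating function to expand $f$ as a Faber series.

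First, I would apply the Riemann mapping theorem (Theorem \ref{Riemann mapping theorem}) with a normalization at infinity to obtain a conformal map $\psi$ from the exterior of the closed unit disk in the Riemann sphere onto $\overline{D^{-}}$ with $\psi(\infty) = \infty$ and Laurent expansion $\psi(w) = cw + c_0 + c_{-1}/w + \cdots$ near infinity for some $c > 0$. Analyticity of the Jordan curve $C$, via the analytic parametrization $\alpha$ from the definition, allows $\psi$ to be extended across $|w| = 1$ to a conformal map on a larger annulus $\{1/\rho < |w| < \rho\}$ for some $\rho > 1$. Since $f$ is analytic on an open neighbourhood of $\overline{D^{+}}$, shrinking $\rho$ if necessary I can assume $f \circ \psi$ is analytic on this annulus.

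Next, I would define the Faber polynomial $\Psi_n$ through the generating function
\smallskip
\[
\frac{\psi'(w)}{\psi(w) - z} = \sum_{n=0}^{\infty} \frac{\Psi_n(z)}{w^{n+1}}, \qquad z \in D^{+}, \; |w| > 1.
\]
For fixed $z \in D^{+}$, the left-hand side is analytic in $w$ on the exterior of the closed unit disk in the Riemann sphere (since $\psi(w) \in D^{-}$ there) and vanishes to first order at infinity, giving an expansion of the stated form. To see that $\Psi_n(z)$ is a polynomial in $z$ of degree exactly $n$, I would further expand
\smallskip
\[
\frac{\psi'(w)}{\psi(w) - z} = \frac{\psi'(w)}{\psi(w)} \sum_{k=0}^{\infty} \frac{z^k}{\psi(w)^k}
\]
for sufficiently large $|w|$, and use $\psi(w) = cw + O(1)$ to verify that $z^{k}\psi'(w)\psi(w)^{-k-1}$ contributes $z^k/(c^k w^{k+1})$ plus strictly higher negative powers of $w$. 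Reading off the coefficient of $1/w^{n+1}$ yields a polynomial in $z$ of degree $n$ with leading coefficient $c^{-n}$.

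For the series representation, fix $z \in D^{+}$ and apply the Cauchy integral formula (Theorem \ref{CIF}) to write $f(z) = (2\pi i)^{-1}\int_{C} f(\tau)/(\tau - z)\,d\tau$. Using the analytic extension of $f$ across $C$, I would deform $C$ outward to the curve $\psi(\{|w| = R\})$ for some $R \in (1, \rho)$ and parametrize by $\tau = \psi(w)$ to obtain
\smallskip
\[
f(z) = \frac{1}{2\pi i}\oint_{|w| = R} \frac{f(\psi(w))\,\psi'(w)}{\psi(w) - z}\,dw.
\]
The generating series converges uniformly on $|w| = R$ for the fixed $z \in D^{+}$, so substituting it in and exchanging sum and integral via Proposition \ref{pass lim to integral} yields
\smallskip
\[
f(z) = \sum_{n=0}^{\infty} a_n \Psi_n(z), \qquad a_n = \frac{1}{2\pi i}\oint_{|w| = R} \frac{f(\psi(w))}{w^{n+1}}\,dw.
\]
The coefficients $a_n$ are precisely the Laurent coefficients of $f \circ \psi$ on the annulus around $|w| = 1$, and this Laurent splitting is exactly the Jump decomposition of the boundary values of $f \circ \psi$ on the unit circle, which is the incarnation of the Jump formula in the argument.

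The main obstacle will be verifying that the coefficients $\Psi_n(z)$ produced by the generating function are genuine polynomials of degree $n$ and that the generating series converges on a suitable circle $|w| = R > 1$ for every $z \in D^{+}$. The former requires careful bookkeeping of the composition of Laurent series, using that $\psi' \ne 0$ throughout its domain; the latter rests on the fact that the analytic continuation of $\psi$ to $\{|w| > 1/\rho\}$ forces the generating series to have radius of convergence strictly less than $1$ in the variable $1/w$, even when $z$ lies in the thin strip of $D^{+}$ parametrized by the extension of $\psi$.
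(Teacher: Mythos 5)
Your proposal is correct, and it takes a genuinely different route from the paper's proof, though both rely on the same geometric input: the Riemann map to the exterior of the unit disk and its analytic continuation across $|w|=1$ granted by the analyticity of $C$. The paper Laurent-expands $f\circ g^{-1}$ on the annulus, transfers this to $f(z)=\sum a_n g(z)^n$ on $C$, applies the Cauchy operator $L^{+}$ term by term, and then uses the Jump formula $g^{n}=L^{+}(g^{n})-L^{-}(g^{n})$ (from the analytic-density version in Appendix A) together with an argument about poles at infinity to identify $L^{+}(g^{n})$ as a degree-$n$ polynomial for $n\geq 0$ and as zero for $n<0$. You instead construct the Faber polynomials directly from the classical generating function $\psi'(w)/(\psi(w)-z)=\sum_{n\geq 0}\Psi_n(z)\,w^{-n-1}$ (with $\psi=g^{-1}$), then deform the Cauchy integral contour to $\psi(|w|=R)$, substitute the generating series, and interchange sum and integral. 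The two constructions coincide: since $L^{+}(g^{n})(z)=\frac{1}{2\pi i}\oint_{|w|=R}w^{n}\,\psi'(w)/(\psi(w)-z)\,dw$ extracts exactly the coefficient $\Psi_n(z)$ from the generating expansion, and the $a_n$ you obtain are the Laurent coefficients of $f\circ\psi$, matching the paper's. What your route buys is a self-contained derivation that does not invoke the Jump formula as a theorem at all and produces $\Psi_n$ and $a_n$ as explicit contour integrals; what the paper's route buys is a cleaner illustration of the Cauchy operator and the Jump decomposition as the organizing tools, which is the thematic point of the appendix. Your closing remark that the Laurent splitting of $f\circ\psi$ ``is'' the Jump decomposition is a fair conceptual observation but is not load-bearing in your argument. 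The two technical obstacles you flag (polynomiality of degree exactly $n$, with leading coefficient $c^{-n}$, and convergence of the generating series on $|w|=R>1$ for each fixed $z\in D^{+}$) are real but manageable exactly as you indicate: the first by tracking Laurent orders under composition using $\psi'(w)=c+O(w^{-2})$, and the second by noting that for $z\in D^{+}$ the sole pole of $w\mapsto\psi'(w)/(\psi(w)-z)$ in $\{|w|>1/\rho\}$, if any, lies strictly inside $|w|=1$ by injectivity of the extended $\psi$.
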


\begin{proof}
By the Riemann mapping theorem, there exists a conformal map $w = g(z)$ from $D^-$ onto the exterior of the unit circle $S^1$, which we will denote by $\mathbb{D}^{-1}$. This is because $z \mapsto 1/z$ is itself a conformal map from $\mathbb{D}$ onto $\mathbb{D}^{-1}$, using the convention that $0 \mapsto \infty$. Now, we make use of a result generalizing the Schwarz reflection principle that allows us to analytically continue conformal maps to a neighbourhood of their boundary when the boundary of both the domain and image are analytic Jordan curves. For reference, see Proposition 1.3 in the book \cite{shapiro1992schwarz}. Thus, $g$ can be analytically continued to a neighbourhood of $C$. It follows that $f^{-1}$ is analytic on a neighbourhood of $\overline{\mathbb{D}}^{-1}$, and hence $f \circ g^{-1}$ is analytic on some neighbourhood of $\mathbb{D}^{-1}$. We then know that $f \circ g^{-1}$ can be represented with a Laurent expansion on $S^1$:
\smallskip
\[
f \circ g^{-1}(w) = \sum_{n=-\infty}^{\infty} a_n w^n .
\]

\noindent
Note that the Laurent coefficients $a_i$ are uniquely determined by $f \circ g^{-1}$. Replacing $w$ with $g$, we obtain the following expression for $f$ on $C$:
\smallskip
\[
f(z) = \sum_{n=-\infty}^{\infty} a_n [g(z)]^n.
\]

\noindent
From the Cauchy integral formula, we have $f(z) = \Phi^{+}(z)$ for $z\in D^+$. Integrating term-by-term, we can rewrite the above series for $f$ as
\smallskip
\[
f(z) = \Phi^+(z) = \sum_{n=-\infty}^{\infty} a_n L^+(g(z)^n) \tag{1}
\]

\noindent
now converging on $D^+$, and using the notation
\smallskip
\[
L^{\pm}(\varphi(z)) \coloneqq \frac{1}{2\pi i}  \int_C \frac{\varphi(\tau)}{\tau - z} \,d\tau
\]

\noindent
to denote the Cauchy type integral on $D^{\pm}$ (respectively) for a particular density function $\varphi$. Next, we calculate the Laurent series of $g$ in $D^-$. As $g$ is injective near $\infty$, we must have $g(\infty)=\infty$, and thus the function $H(z) \coloneqq \left[g(1/z)\right]^{-1}$ is injective near 0, implying that $H'(0) \neq 0$. Moreover, note that $H(0) = 0$. All this to say that $H$ has a power series expansion at 0 of the form
\smallskip
\[
H(z) = \sum_{n=1}^{\infty} b_n z^n
\]

\noindent
where $b_1$ is nonzero. We now manipulate $g$ as a formal power series using $H$:
\smallskip
\begin{align*}
g(z) = \frac{1}{H\left(\frac{1}{z}\right)} &= \frac{1}{\frac{b_1}{z} + \frac{b_2}{z^2} + \cdots} \\
&= \frac{z}{b_1}\cdot\frac{1}{1+ \frac{b_2}{b_1 z} + \frac{b_3}{b_2 z} + \cdots} \\
&= \frac{z}{b_1} \cdot \left[ 1 - \left( \frac{b_2}{b_1 z} + \frac{b_3}{b_2 z} + \cdots\right) + \left( \frac{b_2}{b_1 z} + \frac{b_3}{b_2 z} + \cdots\right)^2 - \cdots \right].
\end{align*}

\noindent
Multiplying this expression out, we get the following representation for $g$:
\smallskip
\[
g(z) = c_1 z + c_0 + \frac{c_{-1}}{z} + \frac{c_{-2}}{z^2} + \cdots
\]

\noindent
for some complex coefficients $c_i$, $i \leq 1$, and where $c_1$ is nonzero. This works because $g$ is analytic on a neighbourhood of infinity, so considering $\labs z \rabs $ ``large enough", we can switch from the formal power series to a convergent one in the typical analytic sense by uniqueness of power series.

Now we calculate the power series expansion of $g(z)^n$ for $n>0$. Let $\omega\coloneqq 1/z$. Using formal power series manipulation, $\text{ord}\left[ g(1/\omega)^n\right] = n(\text{ord}\left[ g(1/\omega)\right]) = -n$, which means $g(1/\omega)^n$ takes the form
\smallskip
\[
g\left(\frac{1}{\omega}\right)^n = \frac{d_n}{\omega^n} + \cdots + \frac{d_1}{\omega} + d_0 + d_{-1}\omega + \cdots 
\]

\noindent
for some complex coefficients $d_i$, $i \leq n$, and where $d_n$ is nonzero. Changing variables, we can write
\smallskip
\[
g(z)^n = d_n z^n + \cdots + d_1 z + d_0 + \frac{d_{-1}}{z} + \cdots.
\]

\noindent
Next, for every integer $n$, the Jump formula says that the identity
\smallskip
\[
g^n = L^+(g^n) - L^-(g^n) 
\]

\noindent
hold on $C$. Since each side of the equation is analytic on an open neighbourhood of $C$, it follows that the relation will also hold on a neighbourhood of $C$. As $g^n$ and $L^-(g^n)$ are analytic on $D^-$, and we have equality on an open set, we can analytically continue $L^+(g^n)$ to the entire plane so that
\smallskip
\[
L^+(g^n) = g^n + L^-(g^n) \tag{2}
\]
\noindent
Recall from Theorem \ref{CTI vanish at infinity} that $L^-(g^n)$ vanishes as infinity. For $n<0$, $g(\infty)^n = 0$, so from (2) $L^+(g^n)$ vanishes at infinity, and in particular doesn't have a pole at infinity. Thus it is must be constant, forcing $L^+(g^n) \equiv 0$.

When $n\geq 0$, using Proposition \ref{pole at infinity iff polynomial}, $L^+(g^n)$ must be a polynomial of degree $n$ (the sought after Faber polynomial associated to the domain $D^+$), which we denote by $\Psi_n$. Thus we can write (1) as
\smallskip
\[
f(z) = \sum_{n=0}^{\infty} a_n \Psi_n(z)
\]

\noindent
for every $z\in D^+$.
\end{proof}

\smallskip

\begin{remark}
A far messier, but still viable approach to proving that $L^+(g^n)$ is a polynomial for $n \geq 0$, and vanishes for $n < 0$ can be done by continuing the approach of formal power series manipulation. 

First, we show that $L^+(g(z)^n)$ is an $n$-degree polynomial for $n\geq0$. By analyticity of the integrand, we can move the contour $C$ out to a circle $C'$ satisfying $\labs \zeta \rabs > \labs z \rabs$ for every $\zeta \in C'$. Expanding the kernel, we have
\smallskip
\[
\frac{1}{\zeta - z} = \frac{1}{\zeta} \cdot \frac{1}{1-\frac{z}{\zeta}} = \frac{1}{\zeta} + \frac{z}{\zeta^2} + \frac{z^2}{\zeta^3} + \cdots
\]

\noindent
for $\labs \zeta \rabs > \labs z \rabs$. Now multiplying by the density $g^n$:
\smallskip
\[
\frac{g(\zeta)^n}{\zeta - z} = \left( \frac{1}{\zeta} + \frac{z}{\zeta^2} + \frac{z^2}{\zeta^3} + \cdots \right) \cdot \left( d_n \zeta^n + \cdots + d_1 \zeta + d_0 + \frac{d_{-1}}{\zeta} + \cdots \right).
\]

\noindent
Distributing on the left hand hand and integrating term by term, it follows that
\smallskip
\[
L^+(g(z)^n) = \frac{1}{2\pi i}\int_{C'} \frac{g(\zeta)^n}{\zeta - z} \,d\tau = d_n z^n + \cdots + d_1 z + d_0
\]

\noindent
using the fact that for every integer $n$:
\smallskip
\[
\int_{C'} \tau^n \,d\tau = \begin{cases}
2\pi i &\! \textup{if} \;\: n = -1\\
0 &\! \textup{if} \;\: n \neq -1 .
\end{cases}
\]

Second, we show that $L^+(g(z)^n) \equiv 0$ for every $n<0$. Fix $n>0$, and write
\begin{align*}
\left[ g(z)^n \right]^{-1} = \frac{1}{g(z)^n} &= \frac{1}{d_n z^n + \cdots + d_1 z + d_0 + \frac{d_{-1}}{z} + \cdots} \\
&= \frac{1}{d_n z^n} \cdot \frac{1}{1 + \cdots + \frac{d_0}{d_n z^n} + \frac{d_{-1}}{d_n z^{n+1}} + \cdots}\\
&= \frac{1}{d_n z^n} \cdot \left[ 1 - (\text{powers of 1/z}) +  (\text{powers of 1/z})^2 - \cdots \right]\\
&= \frac{\omega_{n}}{z^n} + \frac{\omega_{n+1}}{z^{n+1}} + \cdots
\end{align*}

\noindent
for some coefficients $\omega_i$, $i\geq n$, and $\omega_n \neq 0$. Thus when $n<0$, another computation involving the kernel as done above yields
\[
L^+(g(z)^n) = \frac{1}{2\pi i}\int_{C'} \frac{g(\zeta)^n}{\zeta - z} \,d\tau = 0.
\]
\end{remark}
\end{appendices}

\nocite{*}
\printbibliography[title = {References},heading=bibintoc]

@book{gakhov2014boundary,
  title={Boundary value problems},
  author={Gakhov, F. D.},
  year={2014},
  publisher={Elsevier}
}

@book{brown2009complex,
  title={Complex variables and applications eighth edition},
  author={Brown, James W. and Churchill, Ruel V},
  year={2009},
  publisher={McGraw-Hill Book Company}
}

@book{pommerenke2013boundary,
  title={Boundary behaviour of conformal maps},
  author={Pommerenke, Christian},
  volume={299},
  year={2013},
  publisher={Springer Science \& Business Media}
}

@book{lang2013complex,
  title={Complex Analysis},
  author={Serge Lang},
  volume={103},
  year={2013},
  publisher={Springer; Fourth Edition}
}

@misc{apostol1957mathematical,
  title={Mathematical Analysis: Modern Approach to Advanced Calculus second edition},
  author={Tom M. Apostol},
  year={1974},
  publisher={Pearson}
}

@book{muskhelishvili2008singular,
  title={Singular integral equations: boundary problems of function theory and their application to mathematical physics},
  author={Nikola Muskhelishvili},
  year={2008},
  publisher={Courier Corporation}
}

@book{saxe2002beginning,
  title={Beginning functional analysis},
  author={Karen Saxe},
  year={2002},
  publisher={Springer}
}

@article{tietz1957faber,
  title={Faber series and the Laurent decomposition.},
  author={H. Tietz},
  journal={Michigan Mathematical Journal},
  volume={4},
  number={2},
  pages={175--179},
  year={1957},
  publisher={University of Michigan, Department of Mathematics}
}

@book{gelbaum2003counterexamples,
  title={Counterexamples in Analysis},
  author={Gelbaum, Bernard R. and Olmsted, John M.H.},
  year={2003},
  publisher={Dover Publications}
}

@misc{schippersConformalMaps,
  title={Lecture Notes on Conformal Mappings},
  author={Eric Schippers},
}

@book{munkrestopology,
  title={Topology},
  author={James R. Munkres},
  year={2017},
  publisher={Pearson; Second Edition}
}

@book{kalajdzievski2015illustrated,
  title={An illustrated introduction to topology and homotopy},
  author={Kalajdzievski, Sasho},
  year={2015},
  publisher={CRC Press}
}

@book{newman1939elements,
  title={Elements of the topology of plane sets of points},
  author={Newman, M. H. A.},
  year={1939},
  publisher={Cambridge}
}

@book{shapiro1992schwarz,
  title={The Schwarz function and its generalization to higher dimensions},
  author={Shapiro, Harold S},
  volume={4},
  year={1992},
  publisher={John Wiley \& Sons}
}

@book{adamsEssexcalculus,
  title={Calculus: A Complete Course Ninth Edition},
  author={Adams, Robert A. and Essex, Christopher},
  year={2017},
  publisher={Pearson Canada}
}

@book{lehto2012univalent,
  title={Univalent functions and Teichm{\"u}ller spaces},
  author={Lehto, Olli},
  volume={109},
  year={2012},
  publisher={Springer Science \& Business Media}
}

\end{document}